\newcommand{\p}{\partial}
\newcommand{\R}{\mathbb{R}}
\newcommand{\C}{\mathbb{C}}
\newcommand{\D}{\slashed{D}}
\newcommand{\dd}{\mathop{}\!\mathrm{d}}
\newcommand{\dv}{\dd{vol}}
\newcommand{\Neh}{\overline{N}} 
\newcommand{\sph}{\mathbb{S}}
\newcommand{\CM}{{C.M.}} 
\newcommand{\Abracket}[1]{\left<#1\right>} 
\newcommand{\parenthesis}[1]{\left(#1\right)} 
\newcommand{\braces}[1]{\left\{#1\right\}} 
\DeclareMathOperator{\diverg}{div}
\DeclareMathOperator{\SO}{SO}
\DeclareMathOperator{\Spec}{Spec}
\DeclareMathOperator{\Span}{Span}
\DeclareMathOperator{\End}{End}
\DeclareMathOperator{\Aut}{Aut}
\DeclareMathOperator{\Id}{Id}
\DeclareMathOperator{\grad}{grad}
\DeclareMathOperator{\PSL}{PSL}
\DeclareMathOperator{\Eigen}{Eigen}
\DeclareMathOperator{\Hess}{Hess}
\DeclareMathOperator{\Vol}{Vol}
\DeclareMathOperator{\tr}{tr}
\DeclareMathOperator{\EL}{EL}
\DeclareMathOperator{\supp}{supp}
\numberwithin{equation}{section}
\newtheorem{thm}{Theorem}[section]
\newtheorem{lemma}[thm]{Lemma}
\newtheorem{rmk}{Remark}[section]
\title[Super-Liouville equations on the sphere]{Existence results for super-Liouville equations on the sphere via bifurcation theory}
\author[A. Jevnikar]{Aleks Jevnikar}
\address{Aleks Jevnikar, Department of Mathematics, Computer Science and Physics, University of Udine, Via delle Scienze 206, 33100 Udine, Italy.}
\email{aleks.jevnikar@uniud.it}
\author[A. Malchiodi]{Andrea Malchiodi}
\address{Andrea Malchiodi, Scuola Normale Superiore, Piazza dei Cavalieri 7, 56126 Pisa, Italy}
\email{andrea.malchiodi@sns.it}
\author[R. Wu]{Ruijun Wu}
\address{Ruijun Wu, Scuola Normale Superiore, Piazza dei Cavalieri 7, 56126 Pisa, Italy}
\email{ruijun.wu@sns.it}
\begin{document}

\begin{abstract}
We are concerned with super-Liouville equations on $\sph^2$, which have variational structure with a strongly-indefinite functional. We prove the existence of non-trivial solutions by combining the use of Nehari manifolds, balancing conditions and bifurcation theory.
\end{abstract}

\maketitle

\begin{center}
	{\em Dedicated to Alice Chang and Paul Yang with admiration}
\end{center}

\

{\footnotesize
\emph{Keywords}: super-Liouville equations, existence results, bifurcation theory, critical groups.

\medskip

\emph{2010 MSC}: 58J05, 35A01, 58E05, 81Q60.}

\section{introduction}

In this paper we study the super-Liouville equations, arising from Liouville field theory in supergravity.
Recall that the classical Liouville field theory describes the matter-induced gravity in dimension two:
the super-Liouville field theory is a supersymmetric generalization of the classical one, by taking the spinorial super-partner into account, so that the bosonic and fermionic fields couple under the supersymmetry principle.
Such models also play a role in superstring theory.
For the physics of the Liouville field theory and super-Liouville field theory as well as their relations, one can refer to~\cite{polyakov1981quantumbosonic,polyakov1981quantumfermionic,polyakov1987contemporary,seiberg1990notes,bilal2011super}, and for the applications of Liouville field theory in other models of mathematical physics~\cite{spruckyang1992moltivortices,tarantello1996multiple,yang2001solitons,teschner2003from} and the references therein.
It is almost impossible to have a complete references for the related theory.
However, the existence theory of regular solutions of the super-Liouville equations on closed Riemann surfaces, especially on the sphere, is still far from satisfactory.

\medspace

Liouville equations also have a relevant role in two-dimensional geometry.
For example, on a Riemannian surface~$(M^2,g)$, the Gaussian curvature~$K$ of a conformal metric~$\widetilde{g}\coloneqq e^{2u} g$, with~$u\in C^\infty(M)$, is given by
\begin{equation}\label{eq:Gaussian curvature of conformal metric}
K_{\widetilde{g}}= e^{-2u}(K_g -\Delta_g u).
\end{equation}
Conversely, we have the \emph{prescribed curvature problem}: which functions~$\widetilde{K}$ can be the Gaussian curvatures of a Riemannian metric conformal to~$g$?
If~$M$ is a closed surface, the problem reduces to solving  equation~\eqref{eq:Gaussian curvature of conformal metric} in~$u$ for~$K_{\tilde{g}} = \widetilde{K}$ assigned.
This question has been widely studied in the last century, and the solvability of~\eqref{eq:Gaussian curvature of conformal metric} depends on the geometry and the topology of the surface.
For a surface with nonzero genus, this can be solved variationally, as long as~$\widetilde{K}$ satisfies some mild constraints, see~\cite{kazdan1975scalar, schoenyau1994lectureOnDG-I, aubin1998somenonlinear}.
However when the genus is zero, namely~$M$ is a topological two-sphere, the problem has additional difficulties arising from the non-compactness of the automorphism group.
Actually, since there is only one conformal structure on~$\sph^2$, we can take without loss of generality the standard round metric~$g=g_0$, which is the one induced  by the embedding~$\sph^2\subset \R^3$ with Gaussian curvature~$K_{g_0}=1$.
Let~$x=(x^1,x^2,x^3)$ be the standard coordinates of~$\R^3$.
It was shown in~\cite{kazdan1975scalar} that a necessary condition for~$\widetilde{K}$ to admit a solution~$u$ of~\eqref{eq:Gaussian curvature of conformal metric} is that
\begin{equation}
\int_{\sph^2} \langle \nabla \widetilde{K}, \nabla x^j \rangle e^{2u}\dv=0,\qquad \forall\quad 1\le j\le 3,
\end{equation}
where the volume form~$\dv$ and the gradient are taken with respect to~$g_0$. The above formula shows that,
for example, affine functions cannot be prescribed conformally as Gaussian curvatures.

One of the first existence results for the problem on the sphere is due to Moser, see \cite{moser73onanonlinear}: he proved that
there exist solutions provided that $\widetilde{K}$ is an antipodally-symmetric function.
Other important results were proven in \cite{changyang1987prescribing,changyang1988conformal},  removing the symmetry condition and replacing it with an index-counting condition or some assumption of min-max type, see also \cite{chending1987scalar}.
One fundamental tool in proving such results was an improved Moser-Trudinger inequality derived in \cite{aubin1979meilleures}
for functions satisfying a {\em balancing condition}, namely for which the conformal volume has
zero center of mass in $\R^3$ (where $\sph^2$ is embedded). This fact allowed to show that whenever solutions (or approximate solutions)
of \eqref{eq:Gaussian curvature of conformal metric} blow-up, they develop a single-bubbling behavior. With this information
at hand, existence results were derived via asymptotic estimates and Morse-theoretical results.
We should also mention that there are natural generalizations to higher dimensions, see e.g. \cite{malchiodimayer2019prescribing} and references therein.

\

Recently Jost et al. in \cite{jost2007superLiouville} considered a mathematical version of the super-Liouville equations on surfaces.
Given a  Riemann surface~$M$ with metric~$g$, and~$S\to M$  the spinor bundle with Dirac operator~$\D$,
they considered the Euler--Lagrange equations of the functional ~$\widetilde{I}\colon  H^1(M)\times H^{\frac{1}{2}}(S)\to\R$ given by
\begin{equation}
 \widetilde I(u,\psi):=\int_M \biggr( |\nabla u|^2+ 2K_g u-e^{2u} +2\left<(\D+e^u)\psi,\psi\right> \biggr) \dv_g.
\end{equation}
In subsequent works they performed blow-up analysis and studied the compactness of sequences of solutions  under weak assumptions and in various settings; see e.g.~\cite{jost2009energy, jost2014qualitative,jost2015LocalEstimate} and the references therein.

In ~\cite{jevnikar2019existence} we studied the existence issue from a variational viewpoint when~$M$ is a closed surface of genus~$\gamma>1$, with the signs of some terms  adapted to the background geometry.
More precisely  we consider a uniformized surface~$(M,g)$ with~$K_g=-1$ and  the following functional
\begin{equation}
 \widetilde{J}_\rho(u,\psi)\coloneqq \int_M \left( |\nabla u|^2-2u+e^{2u}
            +2\left<(\D-\rho e^u)\psi,\psi\right> \right) \dv_g,
\end{equation}
where $\rho>0$ is a parameter. The pair~$(0,0)$ is clearly a \emph{trivial} critical point of~$\widetilde{J}_\rho$.
Moreover, when~$\rho$ is not in the spectrum of the Dirac operator~$\D$, we could find non-trivial solutions using min-max schemes.
However, the method there does not directly apply to the sphere case, for two reasons.
First, in the sphere case the trivial solution~$(0,0)$ is not isolated, but within a continuum of solutions connecting to it which are geometrically also \emph{trivial} and induced by M\"obius maps.
Second, there is neither local mountain-pass geometry nor local linking geometry in zero genus, preventing us from finding min-max critical points starting from~$(0,0)$.
Thus, the problem in the sphere case is more challenging.

In this article, we use a Morse-theoretical approach combined with bifurcation theory to attack the problem.
Taking the Gauss--Bonnet formula into account
we consider the following functional
\begin{equation}\label{eq:super-Liouville function-sphere case}
 J_\rho(u,\psi)
 =\int_{\sph^2}\left(|\nabla u|^2+2K_g u-e^{2u}+2\left<(\D-\rho e^u)\psi,\psi\right>\right)\dv_g+ 4\pi,
\end{equation}
where~$g$ is a Riemannian metric on~$\sph^2$,~$\dv_g$ is the induced volume form, and the last tail-term~$4\pi=2\pi\chi(\sph^2)$  is simply needed to normalize the functional so that~$J_\rho(0,0)=0$.
The Euler--Lagrange equations for $J_\rho$ are the following
\begin{equation}\label{eq:EL for super-Liouville}
\tag{$\EL$}
 \begin{cases}
  -\Delta_g u{}&={}e^{2u}-K_g+\rho e^u|\psi|^2, \vspace{0.2cm}\\
  \D_g\psi{}&={}\rho e^u\psi.
 \end{cases}
\end{equation}
Let~$u_*$ be a solution of
\begin{equation}
 -\Delta_g u_* = e^{2u_*}-K_g,
\end{equation}
whose existence follows from the uniformization theorem: then we have clearly a \emph{trivial} solution~$(u_*,0)$ of \eqref{eq:EL for super-Liouville}.
However, in contrast to the higher genus case, here we have another explicit family of solutions with nonzero spinor
component and constant function component, see below.
Hence we are interested in finding \emph{non-trivial solutions} with non-constant components.

We remark that for the system~\eqref{eq:EL for super-Liouville} to admit a solution with nonzero spinor component, it is necessary that~$\rho >1$.
Indeed, for every solution ~$(u,\psi)$  (which is smooth by  regularity theory, see~\cite{jost2007superLiouville, jevnikar2019existence})  we can consider the metric~$g_u\coloneqq e^{2u} g$ on~$\sph^2$.
The corresponding Dirac operator~$\D_{g_u}$ has~$\rho$ as an eigenvalue since the second equation transforms into
\begin{equation}
 \D_{g_u}\psi_u= \rho \, \psi_u,
\end{equation}
where~$\psi_u=e^{-\frac{1}{2}u}\beta(\psi)$ for an isometric isomorphism~$\beta\colon S_g\to S_{g_u}$ on the corresponding spinor bundles.
Meanwhile, the first equation implies that the volume of the new metric~$g_u$ satisfies
\begin{equation}
 \Vol(\sph^2,g_u)=\int_{\sph^2} \dv_{g_u}
 =\int_{\sph^2}e^{2u}\dv_{g}
 =\int_{\sph^2} K_g \dv_g -\rho\int_{\sph^2} e^u|\psi|^2\dv_g
 \le 4\pi.
\end{equation}
It is known from~\cite{baer1992lower} that
\begin{equation}
 \lambda_1(\D_{g'})^2\Vol(\sph^2, g') \ge 4\pi
\end{equation}
for any metric~$g'$ on~$\sph^2$.
In particular, we conclude that~$\rho> 1$ if~$\psi$ is not identically zero.

Without loss of generality, we may consider the standard round sphere~$(\sph^2, g_0)$ with~$K_{g_0}\equiv 1$.
This is due to the conformal covariance of the system~\eqref{eq:EL for super-Liouville}, see Section \ref{sect:conformal symmetry}.
Then the trivial solutions are simply~$\theta=(0,0)\in H^1(M)\times H^{\frac{1}{2}}(S)$ and its M\"obius transformations, see again Section \ref{sect:conformal symmetry}.
On the round sphere we know that the eigenspinors corresponding to the eigenvalue~$\lambda_1=1\in \Spec(\D_{g_0})=\mathbb{Z}\backslash \{0\}$ has constant length, i.e. if~$\D\varphi_1=\varphi_1$, then the function~$|\varphi|\colon \sph^2 \to \R$ is constant.
Such spinors constitute a vector space of real dimension 4.
This allows us to construct another family of solutions, namely choosing~$u$ to be the constant function such that~$\rho \, e^u=\lambda_1$ and then choosing~$\psi\in \Eigen(\D_{g_0}, \lambda_1)$ of a length such that the first equation of~\eqref{eq:EL for super-Liouville} holds.
Therefore, for any~$\rho\ge 1$, let~$\varphi_1\in\Eigen(\D_{g_0},1)$ be an eigenspinor of unit length: then the pair
$$
 u=-\ln\rho, \quad \psi=\frac{\sqrt{\rho^2-1}}{\rho}\,\varphi_1
$$
is a solution of~\eqref{eq:EL for super-Liouville}.
Note that these solutions converge to the trivial solution~$\theta=(0,0)$ as~$\rho\to 1$, which highlights a bifurcation phenomenon at the first eigenvalue~$\rho=\lambda_1$.
We will see that this is actually a more general phenomenon.
For later convenience we call a solution~$(u,\psi)$ \emph{non-trivial} if the function component~$u$ is not constant and the
pair $(u,\psi)$ is not in the conformal orbit of constant functions.
Note that~$u=const.$ implies that~$|\psi|=const.$, which is only the case if~$\psi$ is a Killing spinor and~$\rho e^u=1$.
Also, the eigenspinors for~$\lambda_k>1$ do not have constant length, see~\cite[Section 2.2]{camporesi1996ontheeigenfunction} and~\cite[Section 4.2]{hermann2012diraceigenspinors}. 

\begin{thm}\label{thm:existence around eigenvalues}
Let~$\rho=\lambda_k\in\Spec(\D)$ with $\lambda_k> 1$.
Then, $\rho$ is a bifurcation point for \eqref{eq:EL for super-Liouville} on $\sph^2$, i.e. there exists a sequence $\rho_l\to \rho=\lambda_k$ such that \eqref{eq:EL for super-Liouville} admits a non-trivial solution on~$\sph^2$ for $\rho=\rho_l$.
\end{thm}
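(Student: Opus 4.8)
The plan is to realise $\lambda_k$ as a variational bifurcation point along the trivial branch, after quotienting out the conformal degeneracy. By the conformal covariance of Section~\ref{sect:conformal symmetry} we may work on the round sphere $(\sph^2,g_0)$ with $K_{g_0}\equiv 1$, on which $\theta=(0,0)$ solves \eqref{eq:EL for super-Liouville} for \emph{every} $\rho$; this is the branch from which we bifurcate. Linearising the system at $\theta$ yields the decoupled operator $L_\rho(v,\phi)=\parenthesis{-\Delta_{g_0}v-2v,\ \D\phi-\rho\phi}$. The kernel of $-\Delta_{g_0}-2$ is $\Span\braces{x^1,x^2,x^3}$, which is exactly the space of infinitesimal M\"obius deformations of $\theta$: this is why $\theta$ fails to be isolated, and this degeneracy has to be removed before any bifurcation argument can begin.

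To remove it I would restrict $J_\rho$ to the balanced slice
\[
 \mathcal{M}\coloneqq\braces{(u,\psi)\ :\ \int_{\sph^2}x^j\,e^{2u}\dv_{g_0}=0,\ j=1,2,3},
\]
a codimension-three submanifold through $\theta$ whose tangent space at $\theta$ is, in the $u$-variable, complementary to $\Span\braces{x^1,x^2,x^3}$. Two facts must be checked. First, the M\"obius orbit of $\theta$ meets $\mathcal{M}$ only at $\theta$ itself (along the conformal orbit of a constant function the barycentre $\Vol(\sph^2,g_u)^{-1}\!\int_{\sph^2}x^j\,e^{2u}\dv_{g_0}$ vanishes only for rotations, which fix $\theta$), so on $\mathcal{M}$ the point $\theta$ is an isolated trivial solution. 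Second, a critical point of $J_\rho|_{\mathcal{M}}$ has to be a genuine solution of \eqref{eq:EL for super-Liouville}: the spinorial equation is untouched by the constraint, and the three Lagrange multipliers $c_j$ in $-\Delta_{g_0}u=e^{2u}-1+\rho e^u|\psi|^2+\sum_j c_j\,x^j e^{2u}$ should vanish, which I would verify by a Kazdan--Warner/Pohozaev-type identity that exploits the balancing condition. On $\mathcal{M}$ the linearisation $L_\rho|_{T_\theta\mathcal{M}}$ is Fredholm of index zero, invertible for $\rho$ near but different from $\lambda_k$, and with kernel $\Eigen(\D,\lambda_k)$ at $\rho=\lambda_k$.

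Once $\theta$ is isolated, I would carry out a Lyapunov--Schmidt reduction: because $L_\rho|_{T_\theta\mathcal{M}}$ is Fredholm of index zero, critical points of $J_\rho|_{\mathcal{M}}$ near $\theta$ correspond to critical points near $0$ of a reduced functional $\Phi_\rho\in C^2(\Eigen(\D,\lambda_k),\R)$ with $\Phi_\rho(0)=0$ and $\nabla\Phi_\rho(0)=0$. The reduction preserves the variational structure despite the strong indefiniteness of $J_\rho$, since one only solves in the directions on which the linearisation is invertible, regardless of the sign of the quadratic form there. As $\Abracket{(\D-\rho)\phi,\phi}=(\lambda_k-\rho)|\phi|^2$ on $\Eigen(\D,\lambda_k)$, one obtains $\Hess\Phi_\rho(0)[\phi,\phi]=c\,(\lambda_k-\rho)\,\|\phi\|^2+o(|\rho-\lambda_k|)$ with $c\neq 0$; hence for $\rho$ slightly on one side of $\lambda_k$ the origin is a nondegenerate critical point of $\Phi_\rho$ of Morse index $0$, and on the other side of Morse index $m\coloneqq\dim\Eigen(\D,\lambda_k)\geq 1$, so the critical groups $C_*(\Phi_\rho,0)$ change across $\lambda_k$. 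By the standard variational bifurcation criterion --- a jump of the critical groups, equivalently of the Morse index, of an isolated critical point along a branch forces bifurcation --- there are $\rho_l\to\lambda_k$ and $x_l\neq 0$ with $\nabla\Phi_{\rho_l}(x_l)=0$, which lift to solutions $(u_l,\psi_l)\in\mathcal{M}$, $(u_l,\psi_l)\to\theta$, of \eqref{eq:EL for super-Liouville} at $\rho=\rho_l$.

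It remains to see that these solutions are non-trivial in the sense of the Introduction. Since $x_l\neq 0$ is the $\Eigen(\D,\lambda_k)$-component of $\psi_l$ and $\lambda_k\geq 2$, we have $\psi_l\neq 0$ and, distinct Dirac eigenspaces being orthogonal, $\psi_l\notin\Eigen(\D,1)\cup\Eigen(\D,-1)$; hence $\psi_l$ does not have constant length, so by the discussion in the Introduction $u_l$ cannot be constant. Moreover, integrating the first equation of \eqref{eq:EL for super-Liouville} gives $\int_{\sph^2}e^{2u_l}\dv_{g_0}=4\pi-\rho_l\int_{\sph^2}e^{u_l}|\psi_l|^2\dv_{g_0}\to 4\pi$, whereas every solution lying in the conformal orbit of a constant-component solution at the parameter $\rho_l$ has conformal volume $4\pi\rho_l^{-2}\to 4\pi\lambda_k^{-2}<4\pi$; hence for $l$ large $(u_l,\psi_l)$ is not in such an orbit. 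Therefore $(u_l,\psi_l)$ is a non-trivial solution at $\rho=\rho_l$, and $\lambda_k$ is a bifurcation point. The principal difficulty of this scheme is the analysis of the slice $\mathcal{M}$ --- proving that constrained critical points solve the unconstrained system (i.e.\ the $c_j$ vanish) and that $\theta$ is genuinely isolated on $\mathcal{M}$ --- together with checking that the change of index survives the reduction with a nonzero coefficient, so that the jump of critical groups is real.
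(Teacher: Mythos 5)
Your proposal is correct in outline and takes a genuinely different route from the paper, essentially the classical one of~\cite[Sect.~12]{ambrosetti2007nonlinear} (and~\cite{marinoprodi1968teoria}): balance out the M\"obius degeneracy, Lyapunov--Schmidt reduce onto the finite-dimensional kernel $\Eigen(\D,\lambda_k)$, and read off the bifurcation from the sign change of the reduced Hessian $\Hess\Phi_\rho(0)\approx 4(\lambda_k-\rho)\,\Id$. The paper deliberately avoids a finite-dimensional reduction; instead it works throughout on the full infinite-dimensional Nehari-type manifold $\Neh_\rho$, which carries \emph{two more} constraints than your slice $\mathcal{M}$: the spectral projection constraint $G_{2,j}(u,\psi)=0$ for $j<0$ (so that $\psi\in N(u)$ lies in the span of the positive weighted eigenspinors of $e^{-u}\D$), and the normalization $G_3(u,\psi)=0$. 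The spectral projection constraint is what renders the Morse index $l(\rho)$ of $\theta$ \emph{finite} on $T_\theta\Neh_\rho$ -- without it, as you observe, $J_\rho$ is strongly indefinite and the critical groups on the infinite-dimensional level set are not well-defined. The paper then proves Theorem~\ref{thm:deformation around non-bifurcation point} by explicitly building the parametrized vector field $X$ and a flow that carries the sublevel $U_{\rho_*-\varepsilon_1}$ homeomorphically to $U_{\rho_*+\varepsilon_1}$, and concludes by comparing critical groups. What each buys: your reduction sidesteps the delicate flow construction and the Palais--Smale machinery entirely (finite-dimensional Morse theory needs neither), is shorter, and matches the cited references; the paper's argument is more self-contained and develops infrastructure (the Nehari manifold, the natural-constraint lemmas, the explicit deformation) that is reusable for further analysis on $\Neh_\rho$ beyond a single bifurcation statement. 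Two small points to tighten in your write-up: the Kazdan--Warner step showing $c_j=0$ does not actually use the balancing condition -- it uses the conservation law of Lemma~\ref{lemma:conservation law on mixed term}, which in turn requires the unconstrained spinor equation $\D\psi=\rho e^u\psi$ (available since $\mathcal{M}$ constrains only $u$); and your M\"obius argument shows $\theta$ is the unique point of its conformal orbit on $\mathcal{M}$, but the genuine isolation of $\theta$ as a critical point on $\mathcal{M}$ for $\rho\neq\lambda_k$ near $\lambda_k$ comes from the non-degeneracy of $L_\rho|_{T_\theta\mathcal{M}}$, which is a separate (and necessary) verification.
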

The metric~$g$ in the above statement is suppressed: once we proved it for the round metric~$g_0$, then it also holds for any other (smooth) metric~$g$ by conformal and diffeomorphism transformations since, as we recalled, the sphere admits only one conformal class of metrics.

Note that there exists a~$3$-dimensional family of quaternionic structures on the spinor bundle~$S$, which are fibrewise automorphisms preserving the connection, metric and Clifford multiplication.
Thus, once we get a solution with nonzero spinor component, we automatically get a three-dimensional family of solutions for free.

There also exists  the real volume element~$\omega=e_1\cdot e_2$, where~$(e_1, e_2)$ denotes a local oriented orthonormal frame of~$\sph^2$ and the dot is  the Clifford multiplication in the Clifford bundle~$Cl(\sph^2)$.
It is readily checked that~$\omega$ is globally well-defined.
The endomorphism~$\gamma(\omega)\equiv \gamma(e_1)\gamma(e_2)\in \End(S)$ is an \emph{almost-complex} structure,
parallel with respect to the spin connection, but anti-commutative with the Dirac operator:~$\D(\gamma(\omega)\psi)=-\gamma(\omega)\D\psi$.
Therefore, if~$(u,\psi)$ is a solution to~\eqref{eq:EL for super-Liouville}, then the pair~$(u,\gamma(\omega)\psi)$ solves the system
\begin{equation}
 \begin{cases}
  -\Delta_g u{}&={}e^{2u}-K_g+\rho e^u|\gamma(\omega)\psi|^2, \vspace{0.2cm}\\
  \D_g(\gamma(\omega)\psi){}&={}-\rho e^u\left(\gamma(\omega)\psi\right).
 \end{cases}
\end{equation}
That is, we can allow a change of  sign in front of the spinorial part in the functional~$J_\rho(u,\psi)$, without affecting the result.

\medskip

The main observation is that the second equation in~\eqref{eq:EL for super-Liouville} has the form of a weighted eigenvalue equation.
This suggests  to employ a bifurcation argument to search for non-trivial solutions.
Recall that a theorem by Krasnosel'skii states that for a pure (nonlinear) eigenvalue problem, any eigenvalue is a bifurcation point for the eigenvalue equation, see e.g. \cite[Chapter 5, Appendix]{ambrosettiprodi1993primer} and~\cite{krasnoselskii1964topological} with the references therein.
Here we are adopting a Morse-theoretical approach in the spirit of~\cite{marinoprodi1968teoria}, see also~\cite[Sect. 12]{ambrosetti2007nonlinear}.
However, note that here the presence of the Dirac operator makes the functional strongly indefinite and the Morse-theoretical groups are generally not well-defined, meanwhile the critical points are not isolated because of the symmetries of the functional.
To overcome these difficulties we introduce some natural constraints, based on spectral decomposition and balancing conditions, to remove most of the negative directions which decreases the functional and also kill the redundancy of the conformal orbits.
Restricted to this \emph{Nehari type manifold}, the origin is now an isolated critical point, and though the functional is still indefinite, we are able to count the index of the origin within the Nehari manifold and hence get the well-defined local critical groups.
In doing so we reduce ourselves to a more classical setting and the problem is tractable: see also \cite{fitzpatrick1999spectral}
for related issues treated via spectral flows.

The paper is organized as follows.
First we recall some preliminary facts about the Dirac operator and set-up the variational framework.
Then we introduce a class of Nehari manifolds and show that they are natural constraints. After showing the
validity of the Palais-Smale condition,
 we analyze the local behavior of the functional around the origin and define the critical groups there.
In the end we use a parametrized flow to show the bifurcation result, hence obtaining the existence of non-trivial solutions.

\

\noindent {\bf Acknowledgments.}
A.M. has been partially supported by the project {\em Geometric problems with loss of compactness}  from Scuola Normale Superiore.
A.J. and A.M. have been partially supported by MIUR Bando PRIN 2015 2015KB9WPT$_{001}$.
They are also members of GNAMPA as part of INdAM.
R.W. is supported by Centro  di  Ricerca  Matematica  \emph{Ennio  de  Giorgi}.

\
\section{Preliminaries}

Recall that $\sph^2$ admits a unique conformal structure up to diffeomorphism and consider the  Riemannian metric~$g_0$ induced from the embedding ~$\sph^2\subset\R^3$.
The spectrum of the Laplace operator~$-\Delta_{\sph^2}=-\Delta_{g_0}$  is explicitly known: the eigenvalues are given by~$\mu_k=k(k+1)$, for~$k=0,1,2,\dots$,  the multiplicity of~$\mu_0=0$ is~$1$ (eigenfunctions given by constants), that of~$\mu_1=2$ is~$3$ (with eigenfunctions given by affine functions on~$\R^3$ restricting to~$\sph^2$; a basis is given by the coordinate functions~$\{x^1, x^2, x^3\}$), with  multiplicities  of~$\mu_k (k\ge 2)$  given by the binomial coefficients
\begin{equation}
 \binom{2+k}{2}-\binom{k}{2}
\end{equation}
and with eigenfunctions given by homogeneous harmonic polynomials on~$\R^3$ restricted to~$\sph^2$,
see e.g.~\cite[Chapter 4]{ambrosettimalchiodi2006perturbation}.

The two-sphere admits a non-compact group of conformal automorphisms, which constitutes the M\"obius group~$\Aut(\C\cup\{\infty\})=\PSL(2;\C)$.
In terms of the Riemann sphere~$\C\cup\{\infty\}$, these are the fractional linear transformations, which are nothing but compositions of translations, rotations, dilations and inversions.
Note that with zero spinor components, the functional
\begin{equation}
 J_\rho(u,0)= \int_{\sph^2} |\nabla u|^2 +2u -e^{2u}\dv_{g_0}+4\pi,
\end{equation}
is invariant under the M\"obius group action.
Indeed, each element~$\varphi\in \PSL(2;\C)$ is a conformal diffeomorphism with~$\varphi^* g_0=\det(\dd\varphi)g_0$.
For any~$u\in H^1(\sph^2)$, set
\begin{equation}
  u_\varphi\coloneqq u\circ \varphi+\frac{1}{2}\ln \det(\dd\varphi),
\end{equation}
then it is a classical fact that~$J_\rho(u_\varphi,0)=J_\rho(u,0)$.

\

Consider the spinor bundle~$S\to\sph^2$ associated to the unique spin structure of~$\sph^2$ and let~$\D=\D_{g_0}$ be the  Dirac operator.
For basic material on spin geometry and Dirac operators, one may refer to~\cite{friedrich2000dirac, ginoux2009dirac, jost2011riemannian, lawson1989spin}.
Recall that the spectrum of the Dirac operator is
\begin{equation}
 \pm(k+1); \qquad k\in\mathbb{N},
\end{equation}
and the eigenvalue~$\pm(k+1)$ has (real) multiplicity~$4(k+1)$.
In particular, there are no harmonic spinors on~$\sph^2$, and the first positive eigenvalue is~$1$ with eigenspinors having constant length (they are actually given by the Killing spinors).
For more details we refer to~\cite[Chapter 2 and Appendix]{ginoux2009dirac}.

\

We give a brief description of the Sobolev spaces~$H^1(\sph)$ and~$H^{\frac{1}{2}}(S)$ which we will employ.
For basic material on Sobolev spaces and fractional Sobolev spaces, see~\cite{gilbarg2001elliptic, ammann2003habilitation}.
Most recent papers on analysis of Dirac operators contain such an introductory part, and here we only collect some necessary material.

The Sobolev space~$H^1(\sph^2)$ is equipped with the inner product
\begin{equation}
 \left<u,v\right>_{H^1}
 = \int_{\sph^2} \left<\nabla u, \nabla v\right>+ u\cdot v \dv.
\end{equation}
For smooth functions (which are dense in~$H^1(\sph^2)$), an integration by parts gives
\begin{equation}
 \left<u,v\right>_{H^1}
 =\int_{\sph^2}[(1-\Delta)u]\cdot v\dv
 =\left<(1-\Delta)u, v\right>_{L^2}
 =\left<(1-\Delta)u, v\right>_{H^{-1}\times H^1},
\end{equation}
where the last bracket denotes the dual pairing.
Note that, in contrast to the case in~\cite{jevnikar2019existence}, here the functional~$u\mapsto J_\rho(\cdot,0)$ is not coercive.
At any~$u\in H^1(\sph^2)$ there are finitely-many negative directions of the Hessian~$\Hess_{u} J_\rho(u,0)$.
Moreover, the functional~$J_\rho(\cdot,0)$ does not admit local linking geometry around the trivial critical point~$u=0$.

The fractional Sobolev space of the sections of the spinor bundle~$S$ can be defined via the~$L^2$-spectral decomposition.
Recall that~$\D$ is a first-order elliptic operator which is essentially self-adjoint and has no kernel:
counting eigenvalues with multiplicities, the eigenvalues~$\{\lambda_k\}_{k\in\mathbb{Z}_*}$ (where~$\mathbb{Z}_*\equiv\mathbb{Z}\backslash\{0\}$) are listed in a non-decreasing order:
\begin{equation}
 -\infty \leftarrow\cdots\le \lambda_{-l-1}\le \lambda_{-l}\le\cdots\le \lambda_{-1}< 0
 < \lambda_1\le \cdots \le \lambda_k \le \lambda_{k+1}\le \cdots \to +\infty.
\end{equation}
Moreover, the spectrum is symmetric with respect the the origin.
Let~$(\varphi_k)_k$ be the eigenspinors corresponding to~$\lambda_k$, ~$k\in\mathbb{Z}_*$ with~$\|\varphi_k\|_{L^2(M)}=1$:
they form a complete orthonormal basis of~$L^2(S)$.
For any spinor~$\psi\in\Gamma(S)$, we have
$$\label{eq:spinor in basis}
 \psi=\sum_{k\in \mathbb{Z}_*}a_k\varphi_k,
\quad
 \D\psi= \sum_{k\in\mathbb{Z}_*} \lambda_k a_k \varphi_k.
$$
For any~$s>0$, the operator~$|\D|^s\colon \Gamma(S)\to \Gamma(S)$ is defined as
\begin{equation}
 |\D|^s\psi =\sum_{k\in\mathbb{Z}_*} |\lambda_k|^s a_k\varphi_k.
\end{equation}
The domain of~$|\D|^s$ is given by the spinors such that   the right-hand side belongs to~$L^2(S)$, i.e.
\begin{equation}
 H^s(S)\coloneqq \left\{\psi\in L^2(S)\mid \int_{\sph^2}\left<|\D|^s\psi,|\D|^s\psi\right>\dv_{g_0} <\infty  \right\},
\end{equation}
which is a Hilbert space with inner product
\begin{equation}
 \left<\psi,\phi\right>_{H^s}
 = \left<\psi,\phi\right>_{L^2}
 +\left<|\D|^s\psi, |\D|^s\phi\right>_{L^2}.
\end{equation}
For~$s=k\in\mathbb{N}$,~$H^k(S)=W^{k,2}(S)$ and the above norm is equivalent to the Sobolev~$W^{k,2}$-norm.
For~$s<0$,~$H^{s}(S)$ is by definition the dual space of~$H^{-s}(S)$.

Since~$S$ has finite rank, the general theory for Sobolev's embedding on closed manifold continues to hold here.
In particular, for~$0<s<1$ and~$q\le \frac{2}{1-s}$, we have the continuous embedding
\begin{equation}
 H^s(S) \hookrightarrow L^{q}(S).
\end{equation}
Furthermore, for~$q<\frac{2}{1-s}$ the embedding is compact,
 see e.g. \cite{ammann2003habilitation} for more details.

Let us now consider the case~$s=\frac{1}{2}$.
Note that for~$\psi\in H^{\frac{1}{2}}(S)$ we have~$\D\psi\in H^{-\frac{1}{2}}(S)$, which is defined in the distributional sense.
Since~$\D$ has no kernel, we can split the spectrum into the positive and negative parts, and accordingly we have the decomposition
\begin{equation}\label{eq:split}
 H^{\frac{1}{2}}(S)= H^{\frac{1}{2},+}(S)\oplus H^{\frac{1}{2},-}(S).
\end{equation}
Let~$\psi=\psi^+ +\psi^-$ with~$\psi^{\pm}\in H^{\frac{1}{2},\pm}(S)$. Then
\begin{align}
 \int_{\sph^2}\Abracket{|\D|\psi,\psi}\dv_{g_0}
 =&
 \int_{\sph^2} \Abracket{|\D|\psi,\psi}\dv_{g_0}\\
 =&
 \int_{\sph^2} \left[ \left<\D\psi^+,\psi^+\right>-\left<\D\psi^-,\psi^-\right> \right] \dv_{g_0}\\
 =&\||\D|^{\frac{1}{2}}\psi^+\|^2_{L^2}
 +\||\D|^{\frac{1}{2}}\psi^-\|^2_{L^2},
\end{align}
the square root of which defines a norm equivalent to the~$H^{\frac{1}{2}}$-norm.
\

\section{Conformal Symmetry}
\label{sect:conformal symmetry}

We next discuss  the conformal symmetries of the functional and of the equations:
these were treated for example in~\cite{jevnikar2019existence}, but
we recall them here for completeness.
Suppose that~$(u,\psi)$ is a solution of~\eqref{eq:EL for super-Liouville},
let~$v\in C^\infty(M)$ and consider the metric~$g_v\coloneqq e^{2v}g$.
There exists an isometric isomorphism~$\beta\colon S_g\to \widetilde{S}_{g_v}$ of the spinor bundles corresponding to different metrics such that
\begin{equation}\label{eq:conformal transform of Dirac operator}
 \D_{g_v}\left(e^{-\frac{v}{2}}\beta(\psi)\right)
 =e^{-\frac{3}{2}v}\beta(\D_g\psi),
\end{equation}
see e.g.~\cite{ginoux2009dirac, hitchin1974harmonicspinors}, where we are using the notation from~\cite{jost2018symmetries}.
Thus the pair
\begin{equation}
 \begin{cases}
  \widetilde{u}=u-v, \vspace{0.2cm}\\
  \widetilde{\psi}= e^{-\frac{u}{2}}\beta(\psi),
 \end{cases}
\end{equation}
solves the system
\begin{align}
 -\Delta_{g_v}\widetilde{u}
 ={}&-e^{-2v}\Delta_g(u-v)= e^{-2v}(e^{2u}-K_g+\rho e^{u}|\psi|^2+\Delta_g v) \\
 ={}&e^{2(u-v)}-e^{-2v}(K_g-\Delta_g v)+\rho e^{u-v}|e^{-\frac{v}{2}} \beta(\psi)|^2 \\
 ={}&e^{2\widetilde{u}}-{K}_{g_v}
 +\rho e^{\widetilde{u}}|\widetilde{\psi}|^2, \\
\widetilde{\D}_{g_v} \widetilde{\psi}
 ={}&\rho e^{-\frac{3}{2}v}\beta(e^u\psi)
     =\rho e^{u-v}\left(e^{-\frac{1}{2}v}\beta(\psi)\right)
     =\rho e^{\widetilde{u}}\widetilde{\psi},
\end{align}
which has the same form as \eqref{eq:EL for super-Liouville}.

The automorphisms group of the Riemann sphere~$\sph^2=\C\cup\{\infty\}$ is a family of conformal
maps that induce a natural action on Sobolev spaces of functions and spinors.
Let~$\varphi\in \PSL(2,\C)=\Aut(\sph^2)$ be a conformal diffeomorphism with~$\varphi^* g_0=\det(\dd\varphi)g_0$.
For any~$(u,\psi)$, we set
\begin{equation}
 \begin{cases}
  u_\varphi\coloneqq u\circ \varphi+\frac{1}{2}\ln \det(\dd\varphi), \vspace{0.2cm} \\
  \psi_\varphi\coloneqq \left(\det(\dd\varphi)\right)^{1/4}\beta(\psi\circ\varphi),
 \end{cases}
\end{equation}
where~$\beta\colon S\to \varphi^*S$ denotes the isometry of the spinor bundles.
Then, not only~$(u_\varphi,\psi_\varphi)$ satisfies~\eqref{eq:EL for super-Liouville}, but also the functional on~$(\sph^2, g_0)$ stays invariant
\begin{equation}
 J_\rho(u_\varphi,\psi_\varphi)=J_\rho(u,\psi).
\end{equation}
This generalizes~\cite[Prop. 2.1]{changyang1987prescribing} in the classical Liouville case.

As consequences of such symmetries, on one hand, for any given metric on the sphere~$\sph^2$, we can use a conformal diffeomorphism to reduce the problem to the case where the metric on~$\sph^2$ is the standard round metric~$g_0$ with~$K_{g_0}\equiv 1$; on the other hand, a critical point~$(u,\psi)$ of~$J_\rho$ is never isolated in~$H^1(\sph^2)\times H^{\frac{1}{2}}(S)$.
Since the elements in the orbits of the conformal transformations are geometrically the same, we will overcome this problem by picking those elements with centers of mass at the origin.

\section{A natural constraint}
\label{section:natural constraint}

Due to the above conformal symmetry,  without loss of generality we may consider the problem with respect to the standard round metric~$g=g_0$.
Then the functional becomes
\begin{equation}\label{eq:functional-uniformized metric}
 J_\rho(u,\psi)
 =\int_M \biggr( |\nabla u|^2+2u-e^{2u}+2\left(\left<\D\psi,\psi\right>-\rho e^u|\psi|^2\right) \biggr)\dv+4\pi,
\end{equation}
whose Euler--Lagrange equations take the following simple form
\begin{equation}\label{eq:EL-uniformized metric}\tag{$\EL_0$}
 \begin{cases}
  -\Delta_g u=e^{2u}-1+\rho e^u|\psi|^2, \vspace{0.2cm}\\
  \D_g\psi= \rho e^u\psi.
 \end{cases}
\end{equation}
In the functional~$J_\rho$, the part involving spinors is strongly indefinite while the
remaining terms are invariant with respect to the M\"obius group: both these properties make the variational approach quite challenging.
We therefore need to confine such defects.

For~$u\in H^1(\sph^2)$, the function~$e^{2u}$ can be considered as a \emph{mass distribution} on~$\sph^2$, see~\cite{changyang1987prescribing}.
Let~$\vec{x}=(x^1, x^2,x^3)\in \R^3$ be the position vector.
The \emph{center of mass} of~$e^{2u}$ is defined as
\begin{equation}
 \CM(e^{2u})\coloneqq \frac{\int_{\sph^2} \vec{x} e^{2u}\dv}{\int_{\sph^2}e^{2u}\dv} \in \R^3.
\end{equation}
For any~$u\in H^1(M)$, there exists a~$\varphi\in \PSL(2,\C)$ such that~$\CM(e^{2u_\varphi})=0\in \R^3$; moreover, the M\"obius transformation can be chosen to depend on~$u$ in a continuous way~\cite[Lemma 4.2]{changyang1987prescribing}.
Note that such a~$\varphi$ is never unique: there is always the freedom of a~$\SO(3)$-action which leaves~$|\CM(e^{2u})|$ invariant.
See~\cite{changyang1987prescribing} for the argument and more information on the center of mass.
We remark that~$\CM(e^{2u})=0$ means that the function~$e^{2u}$ is orthogonal to the first eigenfunctions on~$\sph^2$ with respect to the~$L^2$-inner product.
Let
\begin{equation}
 \widetilde{H}^1(\sph^2)\coloneqq
 \braces{u\in H^1(M): \CM(e^{2u})=0}.
\end{equation}
\begin{lemma}
 $\widetilde{H}^1(\sph^2)$ is a submanifold of~$H^1(\sph^2)$.
\end{lemma}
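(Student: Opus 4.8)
The plan is to realize $\widetilde{H}^1(\sph^2)$ as the zero set of a $C^1$ (indeed smooth) map into $\R^3$ for which $0$ is a regular value, and then invoke the regular value theorem for Banach manifolds. Define
\[
 \Phi\colon H^1(\sph^2)\to\R^3,\qquad \Phi(u)=\CM(e^{2u})=\frac{\int_{\sph^2}\vec x\, e^{2u}\,\dv}{\int_{\sph^2}e^{2u}\,\dv}.
\]
The first task is to check that $\Phi$ is well defined and of class $C^1$. By Jensen's inequality $\int_{\sph^2}e^{2u}\,\dv\ge|\sph^2|\,e^{2\Avg(u)}>0$, so the denominator never vanishes; the regularity then reduces to the statement that $u\mapsto e^{2u}$ is $C^1$ from $H^1(\sph^2)$ to $L^1(\sph^2)$ (and likewise for $u\mapsto \vec x\, e^{2u}$). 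This is where the Moser--Trudinger inequality enters: it guarantees $e^{2u}\in L^p$ for every $p<\infty$ with bounds locally uniform in $u$, and a standard dominated-convergence/Vitali argument upgrades this to differentiability, with
\[
 d\Phi_u[v]=\frac{2\int_{\sph^2}\vec x\, e^{2u}v\,\dv}{\int_{\sph^2}e^{2u}\,\dv}-\CM(e^{2u})\,\frac{2\int_{\sph^2}e^{2u}v\,\dv}{\int_{\sph^2}e^{2u}\,\dv}.
\]

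Next I would show that $0$ is a regular value, i.e. that $d\Phi_u$ is onto $\R^3$ whenever $\Phi(u)=0$. At such a $u$ the second term above vanishes, so $d\Phi_u[v]$ is a positive multiple of $\int_{\sph^2}\vec x\, e^{2u}v\,\dv$. If the image were a proper subspace of $\R^3$, there would exist a unit vector $\xi\in\R^3$ with $\int_{\sph^2}\Abracket{\xi,\vec x}\,e^{2u}v\,\dv=0$ for all $v\in H^1(\sph^2)$; choosing $v=\Abracket{\xi,\vec x}$, which is smooth hence lies in $H^1(\sph^2)$, forces $\Abracket{\xi,\vec x}^2e^{2u}\equiv 0$, which is impossible since $e^{2u}>0$ and $\Abracket{\xi,\vec x}$ vanishes only on a great circle. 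Hence $d\Phi_u$ is surjective, its kernel is a closed subspace of codimension $3$ and therefore complemented, and the regular value theorem (the Banach-space implicit function theorem) shows that $\widetilde{H}^1(\sph^2)=\Phi^{-1}(0)$ is a smooth submanifold of $H^1(\sph^2)$ of codimension $3$.

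The main obstacle is the regularity of $\Phi$: verifying that the Nemytskii-type map $u\mapsto e^{2u}$ is $C^1$ from $H^1(\sph^2)$ to $L^1$ is not formal and is exactly the place where the two-dimensional critical Sobolev embedding (Moser--Trudinger) is used; everything afterwards, namely the differential formula, the surjectivity, and the invocation of the submanifold theorem, is routine. A convenient simplification for the surjectivity step is that, on the level set $\{\Phi=0\}$, one may equivalently work with the numerator map $u\mapsto\int_{\sph^2}\vec x\, e^{2u}\,\dv$, whose differential $v\mapsto 2\int_{\sph^2}\vec x\, e^{2u}v\,\dv$ has the same kernel there, thereby avoiding carrying the quotient through the argument.
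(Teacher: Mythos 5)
Your proof is correct and follows essentially the same regular-value strategy as the paper: both work with the numerator map $u\mapsto\int_{\sph^2}\vec x\,e^{2u}\,\dv$ and test the differential against affine functions $\langle\xi,\vec x\rangle$. The only differences are that the paper cites Chang--Yang for the invertibility of the Gram matrix $\Lambda_{kj}(u)=\int_{\sph^2}e^{2u}x^kx^j\,\dv$ while you give the direct positive-definiteness argument, and you also spell out the $C^1$ regularity of the Nemytskii map via Moser--Trudinger, which the paper leaves implicit.
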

    \begin{proof}
     Consider the map~$G_1\colon H^1(\sph^2)\to \R^3$ defined by
     \begin{equation}
      G_1(u)=\int_{\sph^2} \vec{x} e^{2u}\dv
      =\parenthesis{\int_{\sph^2} x^1 e^{2u}\dv,\int_{\sph^2} x^2 e^{2u}\dv,\int_{\sph^2} x^3 e^{2u}\dv} \in\R^3.
     \end{equation}
     It suffices to show that~$\dd G_1(u)$ is surjective for each~$u\in H^1(\sph^2)$ and then the preimage~$\widetilde{H}^1(\sph^2)=G_1^{-1}(0)$ is a submanifold.

     The differential is explicitly given by
      \begin{align}
       \dd G_1(u)[v] &= \int_{\sph^2} \vec{x} e^{2u} (2v)\dv\in\R^3.
      \end{align}
     Consider an affine function~$v$ of the form
     \begin{equation}
        v(x)=\sum_{j=1}^3 v_j x^j, \qquad v_j\in \R, \quad j=1,2,3.
     \end{equation}
     For any~$\vec{y}\in\R^3$, we need to solve
     \begin{equation}
        \sum_{j=1}^3 \left(\int_{\sph^2} x^k e^{2u} x^j \dv(x)\right) (2v_j)=y_k, \qquad k=1,2,3.
     \end{equation}
     In~\cite[Section 4]{changyang1987prescribing} it has been shown that the matrix~$\Lambda(u)=(\Lambda_{kj}(u))$, with
     \begin{equation}
     \Lambda_{kj}(u)=\int_{\sph^2}  e^{2u}x^k x^j \dv(x),
     \end{equation}
     is invertible.
     Thus there exists a unique affine function~$v=\sum_{j=1}^3 v_j x^j$, which is clearly in~$H^1(\sph^2)$, such that
     \begin{equation}
      \dd G_1(u)[v]=\vec{y}\in \R^3.
     \end{equation}
     That is,~$\dd G_1(u)$ is surjective to~$\R^3$.
    \end{proof}


Next, we consider  weighted eigenvalues for the Dirac operator.
Given~$u\in H^1(\sph^2)$, consider the operator~$e^{-u}\D_g$ and write~$\{\lambda_j(u)\}$ and~$\{\varphi_j(u)\}$ for the associated eigenvalues and eigenspinors respectively:
\begin{equation}
  e^{-u}\D_g\varphi_j(u)=\lambda_j(u)\varphi_j(u),
  \qquad \forall j\in\mathbb{Z}_*.
\end{equation}
Since $\D_g$ has no kernel, $e^{-u} \D_g$ also does not: the above equalities could equivalently be viewed as weighted eigenvalue equations
\begin{equation}
 \D_g \varphi_j(u)=\lambda_j(u) e^u\varphi_j(u),
 \qquad \forall j\in \mathbb{Z}_*.
\end{equation}
Furthermore these eigenspinors can be chosen to be orthonormal with respect to the weight~$e^u$, namely, for any~$j,k\in\mathbb{Z}_*$,
\begin{equation}
 \int_{\sph^2} \Abracket{\varphi_j(u),\varphi_k(u)}e^u\dv=\delta_{jk}.
\end{equation}
\begin{rmk}
  If $u$ is a smooth function, we have a conformal metric~$g_u= e^{2u}g$ with~$\dv_{g_u}=e^{2u}\dv_g$.
  Moreover, writing~$\beta\colon S_g\to S_{g_u}$ for the isometric isomorphism of corresponding spinor bundles and setting
  \begin{align}
   (\varphi_j)_u\coloneqq e^{-\frac{u}{2}}\beta(\varphi_j(u)),\qquad \forall j\in \mathbb{Z}, \; j\neq 0,
  \end{align}
 the above formulas are to say that
$$
   \D_{g_u}(\varphi_j)_u=\lambda_j(u)(\varphi_j)_u, \quad
    \int_{\sph^2}\Abracket{(\varphi_j)_u,(\varphi_k)_u}e^u\dv_{g_u}=\delta_{jk}.
$$
\end{rmk}

Note that the operator~$e^{-u}\D$ has analytic dependence in~$u$, thus the weighted eigenvalues~$\parenthesis{\lambda_j(u)}$ and eigenspinors~$\parenthesis{\varphi_j(u)}$ have at least~$C^1$-dependence on~$u$, see e.g.~\cite[Chap. 8, Sect. 2]{kato2013perturbation}.
Fixing now~$u\in H^1(\sph^2)$, we consider the vector space
\begin{align}
 N(u)\coloneqq \braces{\psi\in H^{\frac{1}{2}}(S_g): G_{2,j}(\psi)\equiv\int_{\sph^2}\Abracket{\D\psi-\rho e^{u}\psi,\varphi_j(u)}\dv_g =0,\; \forall j<0}.
\end{align}
Since
\begin{align}
 0=\int_{\sph^2} \Abracket{\D_g\psi-\rho e^u\psi,\varphi_j(u)}\dv_g
 =& \int_{\sph^2}\Abracket{\psi,\D_g\varphi_j(u)}\dv_g
    -\int_{\sph^2} e^u\Abracket{\psi, \varphi_j(u)}\dv_g\\
 =&\; (\lambda_j(u)-\rho)\int_{\sph^2}\Abracket{\psi,\varphi_j(u)}e^u\dv_g
\end{align}
and~$\lambda_j(u)<0$ for~$j<0$ while~$\rho\ge1$, we have
\begin{equation}
 \int_{\sph^2}\Abracket{\psi,\varphi_j(u)}e^u\dv_g=0.
\end{equation}
Thus~$N(u)$ is the set of spinors associated to the positive spectrum of~$e^{-u}\D$:
\begin{align}
 N(u)
 =&\braces{\psi\in H^{\frac{1}{2}}(S): P_{u}^-(\psi)=0},
\end{align}
where~$P_{u}^-\colon H^{\frac{1}{2}}(S)\to H^{\frac{1}{2}}(S)$ denotes the projection to the subspace spanned by the weighted eigenspinors~$\braces{\varphi_j(u): j>0}$.
Note that for~$j<0$ (hence~$\lambda_j(u)<0$)
\begin{equation}
 \int_{\sph^2} \Abracket{\D_g\psi,\varphi_j(u)}\dv_g
 =\int_{\sph^2}\Abracket{\psi,\D_g\varphi_j(u)}\dv_g
 =\lambda_j(u)\int_{\sph^2}\Abracket{\psi,\varphi_j(u)}e^u\dv_g=0.
\end{equation}
For another~$v\in H^1(\sph^2)$ small in norm, the spaces~$N(u+v)$ and~$N(u)$ are isomorphic, by the continuous dependence of the eigenspinors on the weight function.
Define
\begin{equation}
 N\coloneqq\braces{(u,\psi)\in H^1(\sph^2)\times H^{\frac{1}{2}}(S):  G_1(u)=\vec{0}\in\R^3,\; \psi\in N(u)} \subset \widetilde{H}^1(\sph^2)\times H^{\frac{1}{2}}(S).
\end{equation}
Then~$N$ is the total space of the trivial vector bundle ~$\pi\colon N\to\widetilde{H}^1(\sph^2)$ with fiber space~$\pi^{-1}(u)=N(u)$.
In particular,~$N$ is a Hilbert submanifold with  Hilbertian structure induced from the space~$H^1(\sph^2)\times H^{\frac{1}{2}}(S)$.

Now let
\begin{align}
 \Neh_\rho \coloneqq
 \braces{(u,\psi)\in N: \fint_{\sph^2} (e^{2u}+\rho e^u|\psi|^2) \dv_g=1}.
\end{align}
To see that it is a sub-manifold, we consider the following map
\begin{align}
 G_3\colon N\to\R, & &
 G_3(u,\psi)\coloneqq \fint_{\sph^2}( e^{2u}+\rho e^u|\psi|^2-1)\dv_g.
\end{align}
Its differential is given by
\begin{equation}
 \dd G_3(u,\psi)[v,h]
 =\fint_{\sph^2}(2v e^{2u}+\rho v e^v|\psi|^2 +2\rho e^v\Abracket{\psi,h})\dv_g
\end{equation}
for~$(v,h)\in T_{u,\psi}N$.
For any~$t\in\R$, if~$\psi=0$ then one can find~$v\in T_u \widetilde{H}^1(\sph^2)$ such that~$\dd G_3(u,0)[v,0]=t$; otherwise~$\psi\neq 0$, we can take~$v=0$ and~$h=s\psi$ for some~$s\in\R$ such that~$\dd G_3(u,\psi)[0,s\psi]=t$.
Thus~$\dd G_3(u,\psi)\colon T_{(u,\psi)}N\to \R$ is always surjective and the preimage~$\Neh_\rho=G_3^{-1}(0)\subset N$ is a submanifold.

To summarize, the subset
\begin{align}
 \Neh_\rho=
 \braces{(u,\psi)\in H^1(\sph^2)\times H^{\frac{1}{2}}(S):
    G_1(u)=0, G_{2,j}(u,\psi)=0, (\forall j<0), \; G_3(u,\psi)=0
 }
\end{align}
is a connected infinite-dimensional manifold with an induced Hilbertian structure.
Restricting the functional~$J_\rho$ to this submanifold~$J_\rho|_{\Neh_\rho}\colon \Neh_\rho\to\R$, we consider the constrained critical points~$(u,\psi)\in \Neh_\rho$ which satisfy the constrained Euler--Lagrange equations
\begin{align}\label{eq:constrained EL-u}
 -\Delta & u+1- e^{2u}-\rho e^u|\psi|^2\\
 =&\sum_{j=1}^3 \alpha_j x^j e^{2u}
   +2\sum_{k<0}\mu_k\parenthesis{\Abracket{\D\psi-\rho e^u\psi,\delta_u\varphi_k(u)}-\rho e^u\Abracket{\psi,\varphi_j(u)}}
 +2\tau\parenthesis{2e^{2u}+\rho e^u|\psi|^2}
\end{align}
\begin{align}\label{eq:constrained EL-psi}
 \D\psi-\rho e^u\psi
 =&\sum_{k<0}\mu_k \parenthesis{\D\varphi_k(u)-\rho e^u\varphi_k(u)}
    +\tau \rho e^u\psi,
\end{align}
where~$\alpha_j,\mu_k,\tau\in\R$ are the Lagrange multipliers\footnote{The right-hand side is the projection of the unconstrained gradient of~$J_\rho$ on the normal space at~$(u,\psi)\in\Neh_\rho$, hence it is well-defined in the Hilbert space~$H^1\times H^{\frac{1}{2}}$. In particular, the series on the right-hand side converges. The same remark applies also in the sequel.}.
In the equation for~$u$ the term~$\delta_u\varphi_k(u)$ denotes the variation of~$\varphi_k(u)$ with respect to~$u$, which exists because of the analytic dependence of~$e^{-u}\D$ on~$u$, and
\begin{align}
 \D\delta_u\varphi_k
 =&\delta_u(\D\varphi_k)
    =\delta_u(\lambda_k(u)e^u\varphi_k(u)) \\
 =&\parenthesis{\delta_u\lambda_k(u)}e^u\varphi_k(u)
   +\lambda_k(u)e^u\varphi_k(u)
   +\lambda_k(u)e^u\delta_u\varphi_k(u) \in L^2
\end{align}
hence~$\|\delta_u\varphi_k(u)\|_{H^1}\le C(1+|\lambda_k(u)|)(1+\|\varphi_k(u)\|_{L^2})$.
\begin{lemma}\label{lemma:naturality}
 If~$(u,\psi)$ is a constrained critical point of~$J_\rho|_{\Neh_\rho}$, then it is also an unconstrained critical point of~$J_\rho$.
\end{lemma}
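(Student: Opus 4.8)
The plan is to prove that every Lagrange multiplier in \eqref{eq:constrained EL-u}--\eqref{eq:constrained EL-psi} must vanish: the $\mu_k$ attached to the spectral constraints $G_{2,k}=0$, the $\tau$ attached to the normalization $G_3=0$, and the three $\alpha_j$ attached to the balancing constraint $G_1=0$. Once this is done, \eqref{eq:constrained EL-u}--\eqref{eq:constrained EL-psi} become literally the system \eqref{eq:EL-uniformized metric}, so $(u,\psi)$ is an unconstrained critical point of $J_\rho$. I would remove the three families of multipliers in turn, in the order $\mu_k$, then $\tau$, then $\alpha_j$, each step simplifying the equations enough to make the next one transparent. (A constrained critical point is smooth by elliptic regularity, so all the manipulations below are legitimate.)

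To eliminate the $\mu_k$ I would pair \eqref{eq:constrained EL-psi} with the weighted eigenspinor $\varphi_l(u)$, $l<0$: the left-hand side pairs to $G_{2,l}(u,\psi)$, which vanishes since $(u,\psi)\in\Neh_\rho$, while on the right-hand side one uses $\D\varphi_k(u)-\rho e^u\varphi_k(u)=(\lambda_k(u)-\rho)e^u\varphi_k(u)$, the $e^u$-orthonormality of the $\varphi_k(u)$, and the identity $\int_{\sph^2}\Abracket{\psi,\varphi_l(u)}e^u\dv=0$ valid for $\psi\in N(u)$ and $l<0$, so that only the term $\mu_l(\lambda_l(u)-\rho)$ survives. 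Since $\lambda_l(u)<0<1\le\rho$ this forces $\mu_l=0$ for all $l<0$, and \eqref{eq:constrained EL-psi} already reduces to $\D\psi=\rho e^u\psi$. Next, with the $\mu_k$ gone I would integrate \eqref{eq:constrained EL-u} over $\sph^2$: the Laplacian term integrates to $0$; the normalization constraint gives $\int_{\sph^2}(e^{2u}+\rho e^u|\psi|^2)\dv=4\pi=\int_{\sph^2}1\,\dv$; and the balancing constraint gives $\sum_j\alpha_j\int_{\sph^2}x^je^{2u}\dv=0$. What remains is $0=2\tau\bigl(\int_{\sph^2}e^{2u}\dv+4\pi\bigr)$, and since the bracket is strictly positive, $\tau=0$.

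The last step, which I expect to be the main obstacle, is to kill the three multipliers $\alpha_j$. After the previous reductions the constrained-criticality condition is the identity $\dd J_\rho(u,\psi)[v,h]=\sum_{l=1}^3\alpha_l\,\bigl(\dd G_1(u)[v]\bigr)_l$, holding for every $(v,h)\in H^1(\sph^2)\times H^{\frac{1}{2}}(S)$. Here I would exploit the M\"obius invariance $J_\rho(u_\varphi,\psi_\varphi)=J_\rho(u,\psi)$: letting $T^{(j)}_t$, $j=1,2,3$, be the flow of the conformal gradient field $\grad_{g_0}x^j$ (these three fields span the non-compact directions of the Lie algebra of $\PSL(2,\C)=\Aut(\sph^2)$) and setting $\xi_j=(v_j,h_j):=\frac{\dd}{\dd t}\big|_{t=0}(u_{T^{(j)}_t},\psi_{T^{(j)}_t})$, conformal invariance gives $\dd J_\rho(u,\psi)[\xi_j]=0$, hence $\sum_l\alpha_l\bigl(\dd G_1(u)[v_j]\bigr)_l=0$ for $j=1,2,3$; that is, $\vec\alpha=(\alpha_1,\alpha_2,\alpha_3)$ is orthogonal to the columns of the symmetric $3\times3$ matrix $M$ with $M_{lj}=\bigl(\dd G_1(u)[v_j]\bigr)_l=\frac{\dd}{\dd t}\big|_{t=0}\int_{\sph^2}x^le^{2u_{T^{(j)}_t}}\dv$. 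A change of variables rewrites $M_{lj}=-\int_{\sph^2}\Abracket{\nabla x^l,\nabla x^j}e^{2u}\dv$; then the sphere identities $\Abracket{\nabla x^l,\nabla x^j}=\delta_{lj}-x^lx^j$ and $\sum_l(x^l)^2\equiv1$ give $M=\Lambda(u)-\tr\bigl(\Lambda(u)\bigr)\Id$, with $\Lambda(u)=\bigl(\int_{\sph^2}e^{2u}x^kx^j\dv\bigr)$. The crucial input is that $\Lambda(u)$ is symmetric and positive definite — precisely the fact from \cite[Section 4]{changyang1987prescribing} already used above for the submanifold structure of $\widetilde{H}^1(\sph^2)$ — from which every eigenvalue of $M$ is an eigenvalue of $\Lambda(u)$ minus the full trace, hence strictly negative; so $M$ is invertible, its columns span $\R^3$, and $\vec\alpha=0$. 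The delicate point is exactly this non-degeneracy: one must be sure the infinitesimal M\"obius directions genuinely detect the center-of-mass constraint, and it is the positivity of $\Lambda(u)$ that secures this. With $\alpha_j=\mu_k=\tau=0$, \eqref{eq:constrained EL-u}--\eqref{eq:constrained EL-psi} reduce to \eqref{eq:EL-uniformized metric}, completing the proof.
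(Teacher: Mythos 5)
Your proof is correct. Steps one and two --- killing the $\mu_l$ by pairing the spinor equation against $\varphi_l(u)$, $l<0$, and killing $\tau$ by integrating the scalar equation against a constant --- coincide with the paper's argument. For the elimination of the $\alpha_j$ you take a genuinely different route: the paper derives a Kazdan--Warner-type identity from the basic identity \eqref{eq:basic identity} and then invokes the conservation law of Lemma~\ref{lemma:conservation law on mixed term} (itself a conformal-invariance statement for the spinorial part alone) to cancel the spinor-dependent contributions, arriving at $\int_{\sph^2}e^{2u}\nabla\bigl(\sum_i\alpha_ix^i\bigr)\cdot\nabla x^j\,\dv=0$ for each $j$. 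You instead plug the three infinitesimal M\"obius directions $\xi_j$ directly into the reduced Lagrange identity $\dd J_\rho=\sum_l\alpha_l\,\dd G_1^l$; the invariance of the \emph{whole} functional under $\PSL(2,\C)$ then forces $\sum_lM_{lj}\alpha_l=0$ with $M_{lj}=-\int_{\sph^2}e^{2u}\nabla x^l\cdot\nabla x^j\,\dv$. These are literally the same three scalar relations the paper obtains, just reached without passing through Kazdan--Warner or the auxiliary conservation lemma. Both proofs then conclude from the positive-definiteness of $\Lambda(u)$: you by identifying $M=\Lambda(u)-\tr\bigl(\Lambda(u)\bigr)\Id$ and noting its eigenvalues $\lambda_i-\sum_k\lambda_k$ are strictly negative, the paper by contracting once more with $\vec\alpha$ and using that $-M$ is a positive quadratic form, so that $\sum_j\alpha_j\nabla x^j\equiv 0$. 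Your derivation is shorter and makes fully explicit that the M\"obius symmetry pairs nondegenerately against the centre-of-mass constraint via $\Lambda(u)$; the trade-off is that it forgoes the intermediate conservation law, a statement of some independent interest generalizing Kazdan--Warner and Chang--Yang.
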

    \begin{proof}
     Suppose~$(u,\psi)\in N_\rho$ satisfies the constrained equations~\eqref{eq:constrained EL-u}-\eqref{eq:constrained EL-psi}: we need to show that all the Lagrange multipliers  vanish.

     First test~\eqref{eq:constrained EL-psi} against~$\varphi_k(u)$.
     By our choices this leads to~$\mu_k=0$, for any~$k<0$.
     Then testing~\eqref{eq:constrained EL-u} against the constant function~$1$, noting that~$G_1(u)=0$ and~$G_3(u,\psi)=0$, we get
     \begin{equation}
      2\tau\int_{\sph^2} 2e^{2u}+\rho e^u|\psi|^2\dv=0,
     \end{equation}
     and hence~$\tau=0$.
     It remains to show that if the system
     \begin{equation}\label{eq:alpha-EL-u}
      -\Delta u+1-e^{2u}-\rho e^u|\psi|^2
        =\sum_{j=1}^3 \alpha_j x^j e^{2u},
     \end{equation}
     \begin{equation}\label{eq:alpha-EL-psi}
      \D\psi-\rho e^u\psi=0,
     \end{equation}
     admits a solution, then~$\alpha_j=0$ for~$j=1,2,3$.

     Recall the \emph{basic identity} from~\cite{kazdan1975scalar}: given a Riemannian manifold~$(M,g)$
     and      two functions~$u, F\in C^\infty(M)$, it holds that
    \begin{equation}\label{eq:basic identity}
    2\Delta u (\nabla F\cdot \nabla u)=\diverg\left(2(\nabla F\cdot \nabla u)\nabla u-|\nabla u|^2\nabla F\right)-2(\Hess(F)-(\Delta F)g)(\nabla u,\nabla u).
    \end{equation}
    We will use this formula for~$F=x^j$ ($j=1,2,3$), which are the eigenfunctions of~$-\Delta_{\sph^2}$ associated to the first eigenvalue~$\mu_1(-\Delta_{\sph^2})=2$:
    $$
    -\Delta_{\sph^2} F=2F, \quad 2\Hess(F)-(\Delta_{\sph^2}F)g=0.
    $$
    Substituting into~\eqref{eq:basic identity} and then integrating over~$M=\sph^2$, we get
    \begin{equation}
    \int_{\sph^2} \Delta u(\nabla F\cdot \nabla u)\dv=0 .
    \end{equation}
    In our situation, following the notational convention  in~\cite{kazdan1975scalar},
    \begin{align}
        \Delta u
        = & \,1-\rho e^u|\psi|^2
        -\left(1-\sum_{j=1}^3\alpha_j x^j\right)e^{2u} \\
            \equiv & \,c-he^u-f e^{2u},
    \end{align}
    where we set~$c=1$, ~$h\equiv\rho |\psi|^2$ and~$f\equiv 1-\sum_{j=1}^3\alpha_j x^j$.
    Thus
    \begin{equation}
    c\int_{\sph^2}\nabla F\cdot \nabla u\dv
    =\int_{\sph^2}h e^u\nabla u\cdot\nabla F\dv
    + \int_{\sph^2}f e^{2u}\nabla F\cdot\nabla u\dv.
    \end{equation}
    Next we apply the argument in~\cite{kazdan1975scalar} to get
    \begin{align}
    LHS=&{}-c\int_{\sph^2}F(\Delta u)\dv
        =-c\int_{\sph^2}F(c-he^u -fe^{2u})\dv \\
     =&{}c\int_{\sph^2}h e^u F\dv+ c\int_{\sph^2}f e^{2u}F \dv_g,
    \end{align}
    where we  used the fact that~$\int_{\sph^2}F\dv=-\frac{1}{2}\int_{\sph^2}\Delta F \dv=0$; meanwhile
    \begin{align}
    RHS=&{}\int_{\sph^2}h \nabla(e^u)\cdot\nabla F\dv
        +\int_{\sph^2}f\nabla(e^{2u})\cdot\nabla F\dv \\
     =&{}-\int_{\sph^2}e^u\diverg(h\nabla F)\dv
        -\frac{1}{2}\int_{\sph^2}e^{2u}\diverg(f\nabla F)\dv \\
     =&{}-\int_{\sph^2}e^u(\nabla h\cdot\nabla F+h\Delta F)\dv
        -\frac{1}{2}\int_{\sph^2}e^{2u}(\nabla f\cdot\nabla F+ f\Delta F)\dv \\
     =&{}-\int_{\sph^2}e^u \nabla h\cdot\nabla F\dv
         +2\int_{\sph^2} e^u h F\dv
         -\frac{1}{2}\int_{\sph^2}e^{2u}\nabla f\cdot\nabla F\dv
         +\int_{\sph^2}e^{2u}f F\dv.
    \end{align}
    We thus get
    \begin{align}
    (2-c)\int_{\sph^2}e^u hF\dv+ (1-c)\int_{\sph^2}e^{2u}f F\dv
    =\int_{\sph^2} e^u\nabla h\cdot\nabla F\dv
        +\frac{1}{2}\int_{\sph^2}e^{2u}\nabla f\cdot\nabla F\dv.
    \end{align}
    That is, for each~$j=1,2,3$
    \begin{equation}
        \rho\int_{\sph^2}e^u|\psi|^2 x^j\dv
        =\rho\int_{\sph^2}e^u\nabla(|\psi|^2)\cdot\nabla x^j\dv
            +\frac{1}{2}\int_{\sph^2}e^{2u}\nabla(1-\sum_{i=1}^3 \alpha_i x^i)\cdot \nabla x^j\dv.
    \end{equation}
    Combining with the following Lemma~\ref{lemma:conservation law on mixed term}, we obtain that for each~$j$
    \begin{equation}
        \int_{\sph^2}e^{2u}\nabla(\sum_{i=1}^3 \alpha_i x^i)\cdot\nabla x^j\dv=0.
    \end{equation}
    Multiplying by~$\alpha_j$ and then summing over~$j$, we obtain
    \begin{equation}
        \int_{\sph^2} e^{2u}\left|\sum_{j=1}^3 \alpha_j \nabla x^j\right|^2 \dv=0.
    \end{equation}
    It follows that ~$\sum_{j=1}^3 \alpha_j \nabla x^j= 0$ everywhere on~$\sph^2$ and hence~$\alpha_j=0$ for each~$j=1,2,3$.
    Therefore~$(u,\psi)$ satisfies the unconstrained Euler--Lagrange equation~\eqref{eq:EL-uniformized metric}.
    \end{proof}

The following lemma describes a conservation law originating from the conformal invariance of the spinorial part of the functional~$J_\rho$.
This can be viewed as a generalization of some results in~\cite{kazdan1975scalar,changyang1987prescribing}.
\begin{lemma}\label{lemma:conservation law on mixed term}
 Let~$\psi\in\Gamma(S)$ be a spinor satisfying~$\D\psi-\rho e^u\psi=0$.
 Then for each~$j=1,2,3$, there holds
 \begin{equation}
  \int_{\sph^2}e^u\nabla(|\psi|^2)\cdot\nabla x^j\dv
  = \int_{\sph^2} e^u |\psi|^2 x^j\dv.
 \end{equation}
\end{lemma}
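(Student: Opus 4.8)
The plan is to exploit the conformal invariance of the Dirac action under the infinitesimal conformal transformations generated by the coordinate functions $x^j$, which on $\sph^2$ are (up to rescaling) the eigenfunctions of $-\Delta$ for $\mu_1 = 2$ and correspond to conformal vector fields. Concretely, if $(u,\psi)$ satisfies $\D\psi = \rho e^u\psi$, I would differentiate the spinorial part of $J_\rho$ along a one-parameter family of conformal diffeomorphisms $\varphi_t$ whose generating vector field $X_j$ is $\grad_{g_0} x^j$, using the transformation rules from Section~\ref{sect:conformal symmetry}. Since $J_\rho(u_{\varphi_t}, \psi_{\varphi_t}) = J_\rho(u,\psi)$ for all $t$, differentiating at $t=0$ and isolating the spinorial contribution $2\int_{\sph^2}(\langle \D\psi,\psi\rangle - \rho e^u|\psi|^2)\dv$ yields a Pohozaev-type identity. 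The equation $\D\psi = \rho e^u\psi$ forces $\langle \D\psi,\psi\rangle - \rho e^u|\psi|^2$ to vanish pointwise, which is precisely what makes the bulk term collapse and leaves only a relation between $\int e^u\nabla(|\psi|^2)\cdot\nabla x^j\dv$ and $\int e^u|\psi|^2 x^j\dv$.

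Alternatively — and this is probably the cleaner route to write down — I would argue directly via an integration-by-parts / Rellich-Pohozaev computation in the spirit of the "basic identity" used in Lemma~\ref{lemma:naturality}. Starting from $\D\psi = \rho e^u\psi$, take the Clifford-covariant derivative along $X_j = \grad x^j$, pair with $\psi$, and use the spinorial Leibniz rule $X\langle\psi,\psi\rangle = 2\,\mathrm{Re}\,\langle\nabla_X\psi,\psi\rangle$ together with the formula $\D(X\cdot\psi) = -X\cdot\D\psi - 2\nabla_X\psi + \diverg(X)\psi + (\text{curvature/Clifford terms})$ valid for conformal $X$ on a surface. Since $X_j$ is a gradient conformal field on $\sph^2$ with $\nabla X_j^\flat = -x^j g_0$ (because $\Hess x^j = -x^j g_0$ on the round sphere), the connection terms simplify dramatically: $\diverg X_j = -2x^j$ and the symmetric traceless part of $\nabla X_j$ vanishes. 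Substituting $\D\psi = \rho e^u\psi$ everywhere and integrating over $\sph^2$, all purely spinorial terms organize into $\int e^u \nabla(|\psi|^2)\cdot\nabla x^j\,\dv$ on one side and $\int e^u|\psi|^2 x^j\,\dv$ on the other, with the factor coming exactly from $\diverg X_j = -2x^j$ matched against the eigenvalue $\mu_1 = 2$; the remaining terms cancel in conjugate pairs upon taking real parts.

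I would carry out the steps in this order: (i) record the identities for $X_j = \grad x^j$ on $(\sph^2,g_0)$, namely $\Hess x^j = -x^j g_0$, $\diverg X_j = -2x^j$, $-\Delta x^j = 2x^j$; (ii) write the conformal-change formula for $\D$ along the flow of $X_j$, or equivalently the commutator identity for $\D$ acting on $X_j\cdot\psi$; (iii) pair the differentiated equation with $\psi$, take the real part, and substitute $\D\psi = \rho e^u\psi$ to eliminate all $\D\psi$ occurrences; (iv) integrate by parts once to move derivatives off $\psi$ and onto the weight $e^u x^j$, collecting the boundary-free terms; (v) identify the two surviving integrals and read off the claimed equality.

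The main obstacle I anticipate is bookkeeping the Clifford-algebra and spin-connection terms in step (ii)–(iii): the commutator of $\D$ with multiplication by a vector field involves both $\nabla_{X_j}\psi$ and curvature-type contributions, and one must verify carefully that the traceless conformal part drops out (using that $X_j$ is a genuine conformal Killing field, not merely any vector field) and that the remaining antisymmetric pieces vanish after taking real parts of Hermitian pairings. A secondary subtlety is regularity: the lemma is stated for $\psi\in\Gamma(S)$, so one should note that solutions of $\D\psi = \rho e^u\psi$ with $u\in H^1$ are smooth by the elliptic regularity already cited from \cite{jost2007superLiouville, jevnikar2019existence}, so all the integrations by parts are legitimate.
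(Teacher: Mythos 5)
Your first paragraph is essentially the paper's argument, but it contains two points of confusion that need correcting. The paper does \emph{not} invoke the full invariance $J_\rho(u_{\varphi_t},\psi_{\varphi_t})=J_\rho(u,\psi)$ and then try to ``isolate the spinorial contribution'' — that would drag along a nonzero $u$-variation term which the hypothesis ($\D\psi=\rho e^u\psi$, with no equation assumed for $u$) cannot control. Instead, the paper varies \emph{only} $\psi$, setting $\psi_t:=\det(\dd\varphi_t)^{1/4}\beta(\psi\circ\varphi_t)$ while holding $u$ fixed, and uses only the invariance of the Dirac action $\int_{\sph^2}\langle\D\psi_t,\psi_t\rangle\dv$ under that family. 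Second, the reason the $t$-derivative of $\int(\langle\D\psi_t,\psi_t\rangle-\rho e^u|\psi_t|^2)\dv$ vanishes at $t=0$ is \emph{not} that $\langle\D\psi,\psi\rangle-\rho e^u|\psi|^2$ vanishes pointwise (that controls the value at $t=0$, not the derivative); it is that the chain rule gives $2\int\langle\D\psi-\rho e^u\psi,\,\dot\psi_0\rangle\dv$, which is zero because $\psi$ solves the Dirac equation. Equating this zero to the directly computed derivative — which, thanks to the invariance of the Dirac piece, reduces to $-\rho\,\frac{\dd}{\dd t}\big|_{t=0}\int e^u|\psi_t|^2\dv$ — and then evaluating the latter with $\frac{\dd}{\dd t}\big|_{t=0}\det(\dd\varphi_t)^{1/2}=\frac{1}{2}\Delta x^j=-x^j$ yields the claimed identity.

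Your second paragraph proposes a genuinely different route — a direct Rellich--Pohozaev computation via the commutator of $\D$ with Clifford multiplication by the conformal Killing field $\grad x^j$. That route is plausible and would likely work, but it is not what the paper does, and as you rightly anticipate it requires careful bookkeeping of the spin-connection and Clifford-algebra terms to see the traceless conformal part drop out. The variational route above is considerably shorter precisely because the conformal covariance of the Dirac operator does all that bookkeeping for you at once; the only explicit computation needed is the derivative of $\det(\dd\varphi_t)^{1/2}$, which the paper relegates to the appendix.
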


 \begin{proof}
  We prove the result for~$j=3$, the others cases being similar.

  Let~$\varphi_t\in \PSL(2;\C)$ be a smooth family of M\"obius transformations such that~$\varphi_0=\Id$ and
  \begin{equation}
   \frac{\dd}{\dd t}\Big|_{t=0} \varphi_t=\nabla x^3.
  \end{equation}
  Such a family can be obtained e.g. by pulling the dilation~$z\mapsto tz$ on~$\C$ back to the Riemann sphere~$\sph^2$ via the standard stereographic projection, see~\cite[Section 2]{changyang1987prescribing}.
  These are conformal diffeomorphisms:~$(\varphi_t^* g_0)_x=\det(\dd\varphi_t(x)){g_0}_x$.
  Let~$\beta\colon S\to S$ be the induced isometric isomorphism of the spinor bundle over~$\sph^2$.
  Define the family of spinors~$\psi_t\coloneqq \det(\dd\varphi_t)^{\frac{1}{4}}\beta(\psi\circ\varphi_t)\in\Gamma(S)$, ~$\psi_0=\psi$.
  Note that the Dirac action is preserved
  \begin{equation}
   \int_{\sph^2} \left<\D\psi,\psi\right>\dv
   =\int_{\sph^2} \left<\D\psi_t,\psi_t\right>\dv.
  \end{equation}
  Consider now the part in the functional containing spinors, i.e.
  \begin{equation}
   \int_{\sph^2} (\left<\D\psi,\psi\right>-\rho e^u|\psi|^2)\dv_g.
  \end{equation}
  Along the above smooth variation we have on one hand, by hypothesis,
  \begin{align}
   \frac{\dd}{\dd t}\Big|_{t=0}  \int_{\sph^2} ( \left<\D\psi_t,\psi_t\right>-\rho e^u|\psi_t|^2)\dv_g
   =2\int_{\sph^2}\left<\D\psi-\rho e^u\psi, \frac{\dd}{\dd t}\Big|_{t=0}\psi_t\right>\dv_g =0;
  \end{align}
  on the other hand, since the Dirac action part is already invariant, it follows that
  \begin{align}
   \frac{\dd}{\dd t}\Big|_{t=0}  \int_{\sph^2} (\left<\D\psi_t,\psi_t\right> &-\rho e^u|\psi_t|^2)\dv_g
   =-\rho\int_{\sph^2} e^u\frac{\dd}{\dd t}\Big|_{t=0}|\psi_t|^2\dv_g  \\
   =&-\rho\int_{\sph^2} e^u\frac{\dd}{\dd t}\Big|_{t=0}\left(\det(\dd\varphi_t)^{\frac{1}{2}}(|\psi|^2\circ\varphi_t)\right) \dv\\
   =&-\rho\int_{\sph^2} e^u\left(\frac{\dd}{\dd t}\Big|_{t=0}\det(\dd\varphi_t)^{\frac{1}{2}}\right)|\psi|^2\dv
    -\rho\int_{\sph^2} e^u \nabla(|\psi|^2)\cdot\left(\frac{\dd}{\dd t}\Big|_{t=0}\varphi_t\right) \dv \\
   =&-\rho\int_{\sph^2}e^u\left(\frac{1}{2}\Delta x^3\right)|\psi|^2\dv
   -\rho\int_{\sph^2}e^u\nabla(|\psi|^2)\cdot\nabla x^3 \dv \\
   =&\rho\int_{\sph^2}e^u|\psi|^2x^3\dv
   -\rho\int_{\sph^2}e^u\nabla(|\psi|^2)\cdot\nabla x^3 \dv .
  \end{align}
  In the last two steps we  used the fact that
  \begin{align}
   \frac{\dd}{\dd t}\Big|_{t=0}\det(\dd\varphi_t)^{\frac{1}{2}}
   =\frac{1}{2}\Delta x^3 = -x^3,
  \end{align}
  which can be checked by an elementary calculation, see Appendix~\ref{sect:appendix}.
  The desired conclusion follows.
 \end{proof}

We proved therefore that~$\Neh_\rho$ is a \emph{Nehari-type manifold}.
In the rest of the paper,  we will look for critical points of~$J_\rho|_{\Neh_\rho}$.

\section{Convergence of bounded Palais-Smale sequences}\label{sect:bounded PS seq subconverge}
In the last sections we will need to deal with bounded Palais-Smale sequences on the Nehari manifolds.
Here we first show that any bounded~$(PS)_c$ sequence (i.e. Palais-Smale sequence at level $c$) admits a strongly convergent sub-sequence.
We remark that, though we will not strictly use the result in this form, later on we will crucially rely on its proof.

Let~$(\rho_n)$ be a converging sequence with limit~$\rho_\infty\ge1$, and let~$c\in\R$.
Let~$(u_n,\psi_n)\in \Neh_{\rho_n}$ be a sequence such that
$$
 J_{\rho_n}(u_n,\psi_n)\to c, \quad
 \nabla^{\Neh_{\rho_n}} J_{\rho_n}(u_n,\psi_n)\to 0,  \qquad
 \textnormal{ as } n\to\infty.
$$
More precisely, for each~$n\ge 1$ there exist an affine function~$\alpha_n=\sum_{j}\alpha_{n,j}x^j\in(\R^3)^*\subset H^1(\sph^2)$, an auxiliary spinor~$\phi_n=\sum_{k<0}\mu_{n,k}\varphi_k(u_n)\in H^{\frac{1}{2}}(S)$, and a number~$\tau_n\in \R$ such that
\begin{align}\label{eq:PS-u}
 -\Delta u_n+1&-e^{2u_n}-\rho_n e^{u_n}|\psi_n|^2
 -\alpha_ne^{2u_n}\\
 &-2\Abracket{\D\psi_n-\rho_n e^{u_n}\psi_n,\delta_u\phi_n}
 +2\rho_n e^{u_n}\Abracket{\psi_n,\phi_n}
 -2\tau_n\parenthesis{2e^{2u_n}+\rho_n e^{u_n}|\psi_n|^2}
 =a_n,
\end{align}
\begin{align}\label{eq:PS-psi}
 \D\psi_n-\rho_n e^{u_n}\psi_n
 -\parenthesis{\D\phi_n-\rho e^{u_n}\phi_n}-\tau_n \rho_n e^{u_n}\psi_n
 =b_n,
\end{align}
with~$a_n\to 0$ in~$H^{-1}(\sph^2)$ and~$b_n\to 0$ in~$H^{-\frac{1}{2}}(S)$.
Here~$\delta_u\phi_n\equiv \sum_{k<0}\mu_{n,k}\delta_u\varphi_k(u) \in H^{\frac{1}{2}}(S)$.
Moreover, we assume that~$(u_n,\psi_n)_n$ are bounded in~$H^1(\sph^2)\times H^{\frac{1}{2}}(S)$.
By passing to a subsequence, we may assume that~$(u_n,\psi_n)$ converges weakly to a limit~$(u_\infty,\psi_\infty)\in H^1(\sph^2)\times H^{\frac{1}{2}}(S)$.

\begin{lemma}\label{lemma:Lagrange multiplier tends to zero}
 Let $(u_n, \psi_n)$, $\alpha_n, \tau_n$ and $\phi_n$ be as above. Then, by passing to a further subsequence, we have
 \begin{enumerate}
  \item ~$\phi_n\to 0$ in~$H^{\frac{1}{2}}(S)$;
  \item $\tau_n\to 0$ in~$\R$;
  \item $\alpha_n\to 0$ in~$\R^3$.
 \end{enumerate}
\end{lemma}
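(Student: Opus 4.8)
The plan is to exploit the structure of the constrained Euler--Lagrange equations, testing them against carefully chosen functions and spinors in exactly the same spirit as the proof of Lemma~\ref{lemma:naturality}, but now keeping track of the error terms $a_n,b_n$ and the weak limit. The three statements are not independent: the natural order is $\phi_n\to 0$, then $\tau_n\to 0$, then $\alpha_n\to 0$, since each step feeds into the next, just as the vanishing of the multipliers was established successively in Lemma~\ref{lemma:naturality}.

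\textbf{Step 1: $\phi_n\to 0$.} Test the spinor equation~\eqref{eq:PS-psi} against $\phi_n$ itself. Because $\phi_n=\sum_{k<0}\mu_{n,k}\varphi_k(u_n)$ lies in the span of the negative weighted eigenspinors of $e^{-u_n}\D$, the identity used to define $N(u_n)$ gives $\int_{\sph^2}\Abracket{\D\psi_n-\rho_n e^{u_n}\psi_n,\phi_n}\dv=0$ and $\int_{\sph^2}\Abracket{e^{u_n}\psi_n,\phi_n}\dv=0$, so the left-hand side collapses to a negative-definite quadratic form in $\phi_n$, namely $-\int_{\sph^2}\Abracket{\D\phi_n-\rho_n e^{u_n}\phi_n,\phi_n}\dv$, which by the spectral gap (the relevant eigenvalues $\lambda_k(u_n)$ stay uniformly below $0$, using boundedness of $u_n$ in $H^1$ and continuity of weighted eigenvalues) controls $\|\phi_n\|_{H^{1/2}}^2$ from below up to a uniform constant. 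The right-hand side is $\Abracket{b_n,\phi_n}\le \|b_n\|_{H^{-1/2}}\|\phi_n\|_{H^{1/2}}\to 0$ relative to $\|\phi_n\|_{H^{1/2}}$. Hence $\|\phi_n\|_{H^{1/2}}\to 0$, and consequently $\delta_u\phi_n\to 0$ in $H^{1/2}$ as well by the estimate $\|\delta_u\varphi_k(u)\|_{H^1}\le C(1+|\lambda_k(u)|)(1+\|\varphi_k(u)\|_{L^2})$ recorded before Lemma~\ref{lemma:naturality} (one must check the multipliers $\mu_{n,k}$ are square-summable with the right weights; this is exactly the footnote's convergence claim for the normal component).

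\textbf{Step 2: $\tau_n\to 0$.} Test the $u$-equation~\eqref{eq:PS-u} against the constant function $1$. Using $G_1(u_n)=0$ and $G_3(u_n,\psi_n)=0$ the terms $\int \alpha_n e^{2u_n}=\sum_j\alpha_{n,j}\int x^j e^{2u_n}=0$ and $\int(e^{2u_n}+\rho_n e^{u_n}|\psi_n|^2)=\Vol(\sph^2)$ simplify things; the terms involving $\phi_n$ and $\delta_u\phi_n$ are $o(1)$ by Step~1 (with the $H^1\times H^{1/2}$-boundedness of $(u_n,\psi_n)$ and Sobolev embedding to handle the $e^{u_n}$ and $|\psi_n|^2$ factors); the term $\int(-\Delta u_n+1)\cdot 1=\Vol(\sph^2)$. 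What remains is $2\tau_n\int(2e^{2u_n}+\rho_n e^{u_n}|\psi_n|^2)=o(1)$, and since the integral is bounded below by a positive constant (as $\int e^{2u_n}\ge \Vol(\sph^2)^2/\int e^{-2u_n}$ type bounds or simply because $\int(e^{2u_n}+\rho_n e^{u_n}|\psi_n|^2)=\Vol$ forces $\int e^{2u_n}$ to stay away from $0$), we conclude $\tau_n\to 0$.

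\textbf{Step 3: $\alpha_n\to 0$.} This is the main obstacle and it reuses the Kazdan--Warner/Bourguignon--Ezin machinery from Lemma~\ref{lemma:naturality} and Lemma~\ref{lemma:conservation law on mixed term}. With $\phi_n,\delta_u\phi_n,\tau_n$ all $o(1)$ and $a_n,b_n\to 0$, equation~\eqref{eq:PS-psi} gives $\D\psi_n-\rho_n e^{u_n}\psi_n=o(1)$ in $H^{-1/2}$, so Lemma~\ref{lemma:conservation law on mixed term} holds up to an error; and~\eqref{eq:PS-u} reads $-\Delta u_n=e^{2u_n}-1+\rho_n e^{u_n}|\psi_n|^2+\alpha_n e^{2u_n}+o(1)$ in $H^{-1}$. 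Running the basic identity argument with $F=x^j$ exactly as in Lemma~\ref{lemma:naturality}, but carrying the error terms, yields $\int_{\sph^2}e^{2u_n}\nabla\bigl(\sum_i\alpha_{n,i}x^i\bigr)\cdot\nabla x^j\dv=o(1)$ for each $j$ — here one must be careful that the error is genuinely $o(1)$ and not merely $o(1)\cdot(1+|\alpha_n|)$; pairing $a_n\to 0$ in $H^{-1}$ against the fixed smooth test functions $x^j$ is harmless, and the $\tau_n,\phi_n$ contributions are $o(1)$ times bounded quantities. Multiplying by $\alpha_{n,j}$ and summing gives $\int_{\sph^2}e^{2u_n}\bigl|\sum_j\alpha_{n,j}\nabla x^j\bigr|^2\dv=o(1)+o(|\alpha_n|)$. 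Since $\{\nabla x^j\}$ span a space on which $\int e^{2u_n}|\cdot|^2$ is a quadratic form bounded below by $c|\alpha_n|^2$ with $c>0$ uniform (using again that $\int e^{2u_n}$ is bounded below, via the matrix $\Lambda(u_n)$ of Chang--Yang being uniformly positive-definite on the bounded set of $u_n$'s — this is where boundedness of $u_n$ in $H^1$ and the constraint $G_1(u_n)=0$ matter), we get $c|\alpha_n|^2\le o(1)+o(|\alpha_n|)$, whence $\alpha_n\to 0$. The delicate point throughout Step~3 is the uniform lower bound on the quadratic form $v\mapsto\int e^{2u_n}|\nabla v|^2$ restricted to affine $v$; I would extract it from weak $H^1$-convergence of $u_n$ (so $e^{2u_n}\to e^{2u_\infty}$ in every $L^p$) together with the non-degeneracy of $\Lambda(u_\infty)$, passing to a further subsequence if needed.
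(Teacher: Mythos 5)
Steps 1 and 2 of your proposal are essentially the paper's argument: test \eqref{eq:PS-psi} against $\phi_n$ (using that the $\psi_n$-dependent pairings vanish by the constraint $\psi_n\in N(u_n)$, and that the remaining quadratic form is coercive on the negative spectral subspace) to get $\phi_n\to 0$, then test \eqref{eq:PS-u} against $1$ and use $G_1=0$, $G_3=0$ to get $\tau_n\to 0$. These match.

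Step 3 is where you diverge from the paper and where the argument has a genuine gap. You propose running the Kazdan--Warner identity \eqref{eq:basic identity} (and its spinorial companion, Lemma~\ref{lemma:conservation law on mixed term}) directly on $(u_n,\psi_n)$, carrying the error terms $a_n$, $b_n$ along. But this is problematic for two related reasons. First, a $(PS)$ sequence is only bounded in $H^1\times H^{\frac12}$, and the identity $\int_{\sph^2}\Delta u_n\,(\nabla F\cdot\nabla u_n)\,\dv=0$ does not make sense at that regularity: $\Delta u_n\in H^{-1}$ and $\nabla F\cdot\nabla u_n\in L^2$, so the pairing is undefined (likewise $|\nabla u_n|^2\nabla F$ is only $L^1$, so the divergence-structure argument cannot be integrated weakly). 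In Lemma~\ref{lemma:naturality} the identity is applied to a genuine critical point, which is smooth by elliptic regularity; here that input is missing. Second, even setting aside the definability, the error $a_n$ only tends to zero in $H^{-1}$, so the quantity $\Abracket{a_n,\nabla F\cdot\nabla u_n}$ that your "carry-the-errors" strategy would produce is \emph{not} controlled — your remark that "pairing $a_n$ against the fixed smooth functions $x^j$ is harmless" only covers one of the two substitutions in the argument; the other one pairs against $\nabla F\cdot\nabla u_n$, which is neither fixed nor smooth. A parallel issue arises for the spinorial conservation law: the variation $\frac{\dd}{\dd t}|_{t=0}\psi_{n,t}$ essentially contains $\nabla\psi_n$, which is not in $H^{\frac12}$, so $\Abracket{\D\psi_n-\rho_n e^{u_n}\psi_n,\frac{\dd}{\dd t}\psi_{n,t}}$ cannot be estimated by $o(1)$ using $b_n\to 0$ in $H^{-\frac12}$.

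The paper sidesteps both issues in Step 3 by a two-stage argument: first, test \eqref{eq:PS-u} against the \emph{affine} function $\alpha_n$ itself (a legitimate $H^{-1}\times H^1$ pairing) and use the uniform bounds on the matrix $\Lambda(u)$ to conclude that $(\alpha_n)$ is bounded in $(\R^3)^*$; then extract $\alpha_n\to\alpha_\infty$ (finite dimension), pass to the weak limit $(u_\infty,\psi_\infty)$, which satisfies the limit equations \emph{exactly} and is therefore smooth by elliptic regularity, and only then apply the Kazdan--Warner argument of Lemma~\ref{lemma:naturality} to the smooth limit to deduce $\alpha_\infty=0$. If you want to keep the spirit of your Step 3, you should insert these two intermediate moves — an a priori bound on $\alpha_n$ via the direct $H^1$-pairing, and a passage to the (smooth) weak limit — rather than trying to make the Pohozaev-type identity hold approximately along the sequence.
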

Thus the Lagrange multipliers are all tending to zero in the limit $n \to + \infty$.
    \begin{proof}[Proof of Lemma~\ref{lemma:Lagrange multiplier tends to zero}]
    We test~\eqref{eq:PS-psi} against~$\phi_n$ to get that
    \begin{equation}
     -\int_{\sph^2}\Abracket{\D\phi_n,\phi_n}\dv_g
     +\rho_n\int_{\sph^2} e^{u_n}|\phi_n|^2\dv
     =\Abracket{b_n,\phi_n}_{H^{-\frac{1}{2}}\times H^{\frac{1}{2}}}\le o(1)\|\phi_n\|_{H^{\frac{1}{2}}},
    \end{equation}
    which implies
    $$
     \|\phi_n\|_{H^{\frac{1}{2}}}\to 0, \quad \int_{\sph^2}e^{u_n}|\phi_n|\dv\to 0.
    $$
    This is equivalent to say that~$(|\lambda_k|^{\frac{1}{2}}\mu_k)_{k<0}\to 0$ in~$\ell^2$, and hence also~$\delta_u\phi_n\to 0$ in~$H^{\frac{1}{2}}(S)$.

    Thus, testing~\eqref{eq:PS-u} against the constant function~$1$ we obtain
    \begin{equation}
     -2\tau_n\int_{\sph^2} (2e^{u_n}+\rho_n e^{u_n}|\psi_n|^2)\dv
     =\Abracket{a_n,1}_{H^{-1}\times H^1}
     +2\int_{\sph^2}\Abracket{\D\psi_n-\rho_n e^{u_n}\psi_n, \delta_u\phi_n}\dv.
    \end{equation}
    Since the~$(u_n,\psi_n)$'s are assumed to be uniformly bounded and the above right-hand side  converges to zero, we conclude that~$\tau_n\to 0$ as~$n\to\infty$.

    Finally, testing~\eqref{eq:PS-u} against~$\alpha_n$ and using that the matrix~$\Lambda(u)$ has eigenvalues bounded both from above and below, we see that the~$\alpha_n$'s are uniformly bounded in~$(\R^3)^*$.
    Therefore, we may assume that~$(\alpha_n)$ converges weakly to~$\alpha_\infty\in (\R^3)^*$.
    By Sobolev's embedding theorems, we see that the weak limit~$(u_\infty,\psi_\infty)$ of the sequence~$(u_n,\psi_n)$ now satisfies the equations
    \begin{align}
     -\Delta u_\infty+1-e^{2u_\infty}-\rho_\infty e^{u_\infty}|\psi_\infty|^2
     &=\alpha_\infty e^{2u_\infty}, \\
     \D\psi_\infty-\rho_\infty e^{u_\infty}\psi_\infty
     &=0,
    \end{align}
    in~$H^{-1}(\sph^2)\times H^{-\frac{1}{2}}(S)$.
    Elliptic regularity theory implies that~$(u_\infty,\psi_\infty)$ is smooth and the argument to prove that~$\Neh_\rho$ is a natural constraint can be employed to show that~$\alpha_\infty=0$.
    It suffices to note that, since~$(\R^3)^*$ is finite-dimensional, the weak convergence coincides with the strong convergence, hence~$\alpha_n\to 0$ in~$(\R^3)^*$.
    \end{proof}

\begin{lemma}
 With the same notation as above,~$(u_n,\psi_n)$ converges to~$(u_\infty,\psi_\infty)$ strongly in~$H^1(\sph^2)\times H^{\frac{1}{2}}(S)$.
\end{lemma}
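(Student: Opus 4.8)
The plan is to upgrade the weak convergence $(u_n,\psi_n)\rightharpoonup(u_\infty,\psi_\infty)$ in $H^1(\sph^2)\times H^{\frac{1}{2}}(S)$ to strong convergence by a standard elliptic‑regularity argument. First I would use Lemma~\ref{lemma:Lagrange multiplier tends to zero} and the estimates in its proof (in particular $\alpha_n\to0$, $\tau_n\to0$, and $\phi_n\to0$, $\delta_u\phi_n\to0$ in $H^{\frac{1}{2}}(S)$) to absorb all the Lagrange‑multiplier terms, together with $a_n\to0$ and $b_n\to0$, and rewrite \eqref{eq:PS-u}--\eqref{eq:PS-psi} as
\[
 -\Delta u_n=e^{2u_n}-1+\rho_ne^{u_n}|\psi_n|^2+g_n,\qquad
 \D\psi_n=\rho_ne^{u_n}\psi_n+h_n,
\]
with $g_n\to0$ in $H^{-1}(\sph^2)$ and $h_n\to0$ in $H^{-\frac{1}{2}}(S)$; for the spinorial equation one sees this directly after writing its right-hand side as $\D\phi_n-\rho_ne^{u_n}\phi_n+\tau_n\rho_ne^{u_n}\psi_n+b_n$ and invoking the boundedness of $\D$ on the Sobolev scale together with the Sobolev embeddings.

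Then I would show that the nonlinearities converge strongly, using only the weak convergence. Passing to a further subsequence, Rellich's theorem gives $u_n\to u_\infty$ in $L^2(\sph^2)$ and a.e., and $\psi_n\to\psi_\infty$ in $L^q(S)$ for every $q<4$ (the compact embedding $H^{\frac{1}{2}}(S)\hookrightarrow\hookrightarrow L^q(S)$). The Moser--Trudinger inequality bounds $\|e^{ku_n}\|_{L^p(\sph^2)}$ for every $k\in\mathbb{N}$ and $p<\infty$, so by the a.e.\ convergence and Vitali's theorem $e^{ku_n}\to e^{ku_\infty}$ in every $L^p(\sph^2)$; combining with H\"older's inequality one gets $e^{2u_n}\to e^{2u_\infty}$ in every $L^p(\sph^2)$, $\rho_ne^{u_n}\psi_n\to\rho_\infty e^{u_\infty}\psi_\infty$ in $L^q(S)$ for $q<4$, and $\rho_ne^{u_n}|\psi_n|^2\to\rho_\infty e^{u_\infty}|\psi_\infty|^2$ in $L^r(\sph^2)$ for $r<2$. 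Since $L^q(S)\hookrightarrow H^{-\frac{1}{2}}(S)$ for $q\ge\frac{4}{3}$ and $L^r(\sph^2)\hookrightarrow H^{-1}(\sph^2)$ for $r>1$, and $u_n\to u_\infty$ in $L^2(\sph^2)\hookrightarrow H^{-1}(\sph^2)$, the right-hand sides of the two displayed equations converge strongly in $H^{-1}(\sph^2)$, resp.\ $H^{-\frac{1}{2}}(S)$.

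Finally I would invert the elliptic operators. On the spectral scale $\D\colon H^{\frac{1}{2}}(S)\to H^{-\frac{1}{2}}(S)$ is an isomorphism ($\D$ has no kernel and $\|\D\psi\|_{H^{-1/2}}\simeq\|\psi\|_{H^{1/2}}$ since $|\lambda_k|\ge1$), so $\psi_n=\D^{-1}(\rho_ne^{u_n}\psi_n+h_n)$ converges strongly in $H^{\frac{1}{2}}(S)$, necessarily to $\psi_\infty$ by uniqueness of weak limits. Likewise $1-\Delta\colon H^1(\sph^2)\to H^{-1}(\sph^2)$ is an isomorphism and $(1-\Delta)u_n=u_n+e^{2u_n}-1+\rho_ne^{u_n}|\psi_n|^2+g_n$ has a right-hand side converging in $H^{-1}(\sph^2)$, whence $u_n\to u_\infty$ strongly in $H^1(\sph^2)$. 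Since the argument applies along any subsequence and always returns the same limit, the whole sequence converges.

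I expect the main obstacle to be purely the bookkeeping of integrability exponents: one has to verify that the critical threshold $q=4$ for $H^{\frac{1}{2}}(S)$ leaves just enough room so that $e^{u_n}|\psi_n|^2$ sits in some $L^r(\sph^2)$ with $r>1$ (hence in $H^{-1}$) and $e^{u_n}\psi_n$ sits in some $L^q(S)$ with $q>\frac{4}{3}$ (hence in $H^{-\frac{1}{2}}$), and that the weighted-eigenspinor corrections hidden in $g_n$ are genuinely lower order --- this last point rests on $\D\psi_n-\rho_ne^{u_n}\psi_n=h_n\to0$ and $\delta_u\phi_n\to0$ in $H^{\frac{1}{2}}(S)$, both supplied by Lemma~\ref{lemma:Lagrange multiplier tends to zero}.
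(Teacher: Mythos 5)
Your proposal is correct and follows essentially the same route as the paper's own proof: first one invokes Lemma~\ref{lemma:Lagrange multiplier tends to zero} to kill the Lagrange‑multiplier terms, then one uses the compact Rellich/Sobolev embeddings together with Moser--Trudinger to upgrade the nonlinearities to strong convergence in $H^{-1}(\sph^2)$ and $H^{-\frac12}(S)$, and finally one inverts the elliptic operators. The only cosmetic difference is that the paper works with the differences $\widetilde u_n=u_n-u_\infty$ and $\widetilde\psi_n=\psi_n-\psi_\infty$ and shows $-\Delta\widetilde u_n\to0$, $\D\widetilde\psi_n\to0$, whereas you phrase the same estimates by applying $(1-\Delta)^{-1}$ and $\D^{-1}$ to the equations; these are equivalent.
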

    \begin{proof}
    Since~$(u_n,\psi_n)$ converges to~$(u_\infty,\psi_\infty)$ weakly in~$H^1(\sph^2)\times H^{\frac{1}{2}}(S)$, we have
    \begin{align}
        e^{t u_n}&\to e^{t u_{\infty}}\quad\textnormal{ in } L^p(\sph^2), \quad \forall t\in\R, \forall p\in [1,\infty),
        \\
        \psi_n&\to\psi_\infty,\quad \textnormal{ in } L^q(S)\quad \forall q\in [1,4).
    \end{align}
    Now set~$\widetilde{u}_n\coloneqq u_n-u_\infty$ and~$\widetilde{\psi}_n\coloneqq \psi_n-\psi_\infty$.
    The difference of the spinors satisfies the equation
    \begin{align}
     \D\widetilde{\psi}_n
     =&\D\psi_n-\D\psi_\infty \\
     =& \parenthesis{\rho_n e^{u_n}\psi_n-\rho_\infty e^{u_\infty}\psi_\infty}
       +\parenthesis{\D\phi_n-\rho e^{u_n}\phi_n}
       +\tau_n\rho_n e^{u_n}+b_n \\
     =&\rho_n e^{u_n}(\psi_n-\psi_\infty)
     +\rho_n(e^{u_n}-e^{u_\infty})\psi_\infty
     +(\rho_n-\rho_\infty)e^{u_\infty}\psi_\infty \\
      &+\parenthesis{\D\phi_n-\rho e^{u_n}\phi_n}
       +\tau_n\rho_n e^{u_n}+b_n  \quad \to 0 \qquad \textnormal{ in } H^{-\frac{1}{2}}(S).
    \end{align}
    Since~$\D$ has no kernel, we see that~$\|\widetilde{\psi}_n\|_{H^{\frac{1}{2}}}\to 0$, that is~$\psi_n\to\psi_\infty$ in~$H^{\frac{1}{2}}(s)$.

    The same strategy works for the scalar components.
    Indeed,
    \begin{align}
     -\Delta\widetilde{u}_n
     =& -\Delta u_n+\Delta u_\infty \\
     =&\parenthesis{e^{2u_n}-e^{2u_\infty}}
      +\parenthesis{\rho_n e^{u_n}|\psi_n|^2
                    -\rho_\infty e^{u_\infty}|\psi_\infty|^2}
      +\alpha_n e^{2u_n} \\
      &+2\Abracket{\D\psi_n-\rho_n e^{u_n}\psi_n,\delta_u\phi_n}
       -2\rho_n e^{u_n}\Abracket{\psi_n,\phi_n}
       +2\tau_n(e^{2u_n}+\rho_n e^{u_n}|\psi_n|^2)+a_n.
    \end{align}
    Noting that
    \begin{align}
     \rho_n e^{u_n}|\psi_n|^2
                    -\rho_\infty e^{u_\infty}|\psi_\infty|^2
     =&\rho_n e^{u_n}\parenthesis{|\psi_n|^2-|\psi_\infty|^2}
        +\rho_n(e^{u_n}-e^{u_\infty})|\psi_\infty|^2
        +(\rho_n-\rho_\infty)e^{u_\infty}|\psi_\infty|^2,
    \end{align}
    which converges to zero in~$L^{\frac{4}{3}}(\sph^2)$, we see that~$-\Delta \widetilde{u}_n\to 0$ in~$H^{-1}(\sph^2)$.
    Since~$\|\widetilde{u}_n\|_{L^2}\to 0$, we conclude that~$\|\widetilde{u}_n\|_{H^1(\sph^2)}\to 0$, as desired.
    \end{proof}

\begin{rmk}
 Indeed one can show that any~$(PS)_c$ sequence is bounded.
 Combining this with the above result, we see that the functional~$J_\rho|_{\Neh_\rho}$ satisfies the Palais-Smale conditions.
\end{rmk}

\section{Local geometry around the origin}
\label{sect:local critial groups}

We have seen that~$\Neh_\rho$ is a ~\emph{Nehari manifold} for the functional~$J_\rho$ and to prove existence of solutions to \eqref{eq:EL for super-Liouville} it suffices to find critical points of the restricted functional~$J_\rho|_{\Neh_\rho}$.
We first take a closer look at the local behavior of the functional around the  \emph{trivial} critical point~$\theta=(0,0)\in \Neh_\rho$, and then compute the critical groups at the origin.
Note that~$J_\rho(0,0)=0$.

The tangent space of~$\Neh_\rho$ at~$\theta$ is
\begin{align}
 T_\theta\Neh_\rho
 =&\braces{
  (v,h)\in H^1(\sph^2)\times H^{\frac{1}{2}}(S)\mid
  \int_{\sph^2}x^j v\dv=0 \;(j=1,2,3);\;
  \psi^-=0;\;
  \bar{v}=\fint_{\sph^2} v\dv =0
  } \\
  =&\Eigen(-\Delta_{\sph^2};\braces{0,2})^\perp\oplus H^{\frac{1}{2},+}(S).
\end{align}
Since~$\theta\in \Neh_\rho$ is a critical point, the local behavior of the functional~$J_\rho$ is determined by its Hessian, which is given by
\begin{align}
 \Hess(J_\rho|_{\Neh_\rho})[(v,h), (v,h)]
 =\int_{\sph^2} [2\left( |\nabla v|^2-2v^2 \right)
 +4\left<\D h-\rho h,h\right>]\dv.
\end{align}
Consider the case~$\rho\notin\Spec(\D)$:
on the finite-dimensional subspace
\begin{align}
 (T_\theta \Neh_\rho)^-\equiv
 \oplus_{\lambda<\rho}\Eigen(\D; \lambda)
  & &
  (\textnormal{with }\; l(\rho)\coloneqq \dim_\R (T_\theta \Neh_\rho)^-<\infty)
\end{align}
the Hessian is negative-definite, while on the complement subspace~$(T_\theta N_\rho)^+$ the Hessian is positive-definite.
In particular the Hessian~$\Hess(J_\rho)$ at~$\theta$ is non-degenerate and thus~$\theta$ is an isolated critical point.

We can then define the critical groups as in~\cite{marinoprodi1968teoria, chang1993infinite} for the functional~$J_\rho$ on $\Neh_\rho$ at the isolated critical point~$\theta=(0,0)$ as follows.
Let~$G$ be a non-trivial abelian group.
Let~$J_\rho^{c}$ denote the sublevel~$\{J_\rho\le c\}\cap \Neh_\rho$.
The critical groups of~$J_\rho|_{\Neh_\rho}$ at~$\theta\in \Neh_\rho$ are defined by:
\begin{equation}
 C_k(J_\rho|_{\Neh_\rho},\theta)
 \coloneqq
 H_k(J_\rho^{0}\cap U, (J_\rho^{0}\backslash\{\theta\})\cap U;G),
\end{equation}
where~$U$ is a small neighborhood such that there are no critical points in~$(J_\rho^{0}\backslash\{\theta\})\cap U$, and the right-hand side stands for the singular homology groups with coefficients in~$G$.
These groups are well-defined and independent of the choice of~$U$, thanks to the excision property.

By the above computation of the Hessian of~$J_\rho|_{\Neh_\rho}$ at~$\theta$, we see that
\begin{equation}
 C_k(J_\rho|_{\Neh_\rho},\theta)=
 \begin{cases}
  G & k= l(\rho), \\
  0 & k\neq l(\rho).
 \end{cases}
\end{equation}

\section{Local deformation of sublevels around non-bifurcation points}

We say that~$\rho_*$ is a \emph{bifurcation point} of~\eqref{eq:EL-uniformized metric} if there exist a sequence of numbers~$(\rho_n)$ and a sequence of non-trivial critical points~$(u_n,\psi_n)$ of~$J_{\rho_n}$ such that
\begin{equation}
 (u_n,\psi_n;\rho_n)\to (0,0;\rho_*) \quad \textnormal{ in } H^1(\sph^2)\times H^1(S) \times\R.
\end{equation}
In other words,~$(0,0;\rho_*)$ is an accumulation point of the set of non-trivial solutions
\begin{equation}
 \braces{(u,\psi;\rho): (u,\psi)\in \Neh_\rho \backslash\braces{(0,0)}, \; \dd J_\rho(u,\psi)=0}.
\end{equation}

\begin{thm}\label{thm:deformation around non-bifurcation point}
 Assume that
 \begin{equation} \label{hp:non bifurcation}  \tag{*}
  \rho_*>1 \textnormal{ is not a bifurcation point of ~\eqref{eq:EL-uniformized metric}}.
 \end{equation}
 Then there exist a number~$\varepsilon_1>0$ and two relative open neighborhoods~$U_{\rho_*\pm \varepsilon_1}$ of~$\theta=(0,0)$ in the corresponding sublevels:
 \begin{equation}
  U_{\rho_*\pm \varepsilon_1} \subset
  \braces{(u,\psi)\in \Neh_{\rho_*\pm \varepsilon_1}: J_{\rho_*\pm\varepsilon_1}(u,\psi)\le 0}
 \end{equation}
 such that~$U_{\rho_*-\varepsilon_1}$ is homeomorphic to~$U_{\rho_*+\varepsilon_1}$.
\end{thm}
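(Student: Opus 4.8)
The plan is to show that, under the non-bifurcation assumption~\eqref{hp:non bifurcation}, the sublevel sets of $J_\rho|_{\Neh_\rho}$ near $\theta$ do not change topology as $\rho$ crosses $\rho_*$, by constructing an explicit deformation of one sublevel onto the other via a parametrized negative-gradient flow. Concretely, I would first fix a small ball $B_\delta$ around $\theta$ in $\Neh_{\rho}$ (using the trivialization of the Nehari bundle over $\widetilde{H}^1$, so that for $|\rho-\rho_*|$ small all the $\Neh_\rho$ are identified with a fixed model Hilbert manifold) and consider the family of functionals $f_\rho \coloneqq J_\rho|_{\Neh_\rho}$ restricted to $B_\delta$. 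The key input is a \emph{quantitative} non-bifurcation statement extracted from~\eqref{hp:non bifurcation} together with the Palais--Smale analysis of Section~\ref{sect:bounded PS seq subconverge}: there exist $\varepsilon_1>0$ and $\delta>0$ and $c_0>0$ such that for all $\rho\in[\rho_*-\varepsilon_1,\rho_*+\varepsilon_1]$ one has $\|\nabla^{\Neh_\rho} f_\rho(u,\psi)\|\ge c_0$ on the punctured ball $\{(u,\psi)\in B_\delta: (u,\psi)\neq\theta\}$, i.e.\ $\theta$ is a \emph{uniformly isolated} critical point. If this failed, one would get a sequence $(u_n,\psi_n;\rho_n)\to(\theta;\rho_*)$ with $\nabla^{\Neh_{\rho_n}}f_{\rho_n}(u_n,\psi_n)\to 0$; by Lemma~\ref{lemma:Lagrange multiplier tends to zero} and the subsequent strong-convergence lemma the Lagrange multipliers vanish in the limit and $(u_n,\psi_n)$ are genuine critical points of $J_{\rho_n}$ accumulating at $(\theta;\rho_*)$, contradicting~\eqref{hp:non bifurcation}.

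Given this uniform gradient bound, the standard deformation machinery (as in~\cite{chang1993infinite, marinoprodi1968teoria}) applies. I would build a pseudo-gradient vector field $X_\rho$ for $f_\rho$ on a neighborhood of the punctured ball, depending continuously on $\rho$, cut it off to vanish near $\theta$ and near $\partial B_\delta$, and integrate it. Because $f_\rho(\theta)=0$ and the Hessian computation of Section~\ref{sect:local critial groups} shows that near $\theta$ (for $\rho\notin\Spec(\D)$) the functional looks like a fixed nondegenerate quadratic form $Q_\rho$ whose Morse index $l(\rho)$ is \emph{locally constant} on each component of $[\rho_*-\varepsilon_1,\rho_*+\varepsilon_1]\setminus\Spec(\D)$ — and, crucially, $\rho_*\notin\Spec(\D)$ is forced, since otherwise $\rho_*$ would be a bifurcation point by the explicit Killing-spinor family / the general phenomenon of Theorem~\ref{thm:existence around eigenvalues}, so $l(\rho)$ is actually constant on the whole interval — the local model does not jump. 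The negative-gradient flow of $f_\rho$ then retracts $\{f_\rho\le 0\}\cap B_\delta$ onto a set homeomorphic to the negative-eigenspace disk of $Q_\rho$, uniformly in $\rho$; composing the flow for $\rho_*-\varepsilon_1$ with (the inverse of) the flow for $\rho_*+\varepsilon_1$ through the common local model gives the desired homeomorphism $U_{\rho_*-\varepsilon_1}\cong U_{\rho_*+\varepsilon_1}$. One takes $U_{\rho_*\pm\varepsilon_1}$ to be these retracts, which are relatively open in the respective sublevels and contain $\theta$.

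The main obstacle I anticipate is the \emph{uniformity in $\rho$} of all the estimates near $\theta$: the Nehari manifolds $\Neh_\rho$ move with $\rho$ (the constraint $G_3$ involves $\rho$, and the fibers $N(u)$ are defined by the positive spectrum of $e^{-u}\D$ which is $\rho$-independent — good — but the level-$1$ normalization is not), so one must first set up a single ambient chart in which all $f_\rho$ live, controlling the $\rho$-dependence of the change of coordinates in $C^1$; and the pseudo-gradient flow must be shown to stay inside $B_\delta$ for a uniform time, which is where the uniform lower bound $\|\nabla f_\rho\|\ge c_0$ off $\theta$ and the uniform coercivity of the quadratic model (i.e.\ a spectral gap of $\Hess f_\rho(\theta)$ bounded below away from $0$, valid since $\rho$ stays in a compact interval disjoint from $\Spec(\D)$) are essential. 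A secondary technical point is handling the Lagrange-multiplier terms in the constrained gradient when passing to the limit — but this is exactly the content of Lemma~\ref{lemma:naturality} and Lemma~\ref{lemma:Lagrange multiplier tends to zero}, which I would invoke directly.
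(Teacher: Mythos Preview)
Your argument contains a circular step that breaks precisely in the case the theorem is designed for. You write that ``$\rho_*\notin\Spec(\D)$ is forced, since otherwise $\rho_*$ would be a bifurcation point by \ldots\ the general phenomenon of Theorem~\ref{thm:existence around eigenvalues}''. But Theorem~\ref{thm:existence around eigenvalues} is \emph{proved using} Theorem~\ref{thm:deformation around non-bifurcation point}: one assumes $\rho_*=\lambda_k\in\Spec(\D)$ is \emph{not} a bifurcation point, applies the present theorem to get $U_{\rho_*-\varepsilon_1}\cong U_{\rho_*+\varepsilon_1}$, and then reaches a contradiction with the critical-group computation of Section~\ref{sect:local critial groups}. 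So you may not invoke Theorem~\ref{thm:existence around eigenvalues} here, and you must allow $\rho_*\in\Spec(\D)$. Once you do, your Morse-model argument collapses: the Hessian of $J_\rho|_{\Neh_\rho}$ at $\theta$ is degenerate at $\rho=\rho_*$, and the Morse index $l(\rho)$ jumps across $\rho_*$, so ``retracting each sublevel onto the negative-eigenspace disk of $Q_\rho$'' yields disks of \emph{different} dimensions on the two sides and gives no homeomorphism. (There is also a secondary slip: the uniform lower bound $\|\nabla^{\Neh_\rho}f_\rho\|\ge c_0$ on the punctured ball is false as stated, since the gradient vanishes at $\theta$; the non-bifurcation hypothesis only rules out \emph{other} critical points nearby, it does not bound the gradient away from zero.)

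The paper's proof avoids the Hessian altogether and does \emph{not} flow along $-\nabla J_\rho$. Instead it writes $J_\rho=J^1-\rho J^2$ with $J^1,J^2$ independent of $\rho$, and shows (Lemmas after the statement) that on the set $M_\varepsilon\cap B_\delta(\theta)$ the constrained gradients $\nabla^{\Neh_\rho}J^1$ and $\nabla^{\Neh_\rho}J^2$ are linearly independent of each other and of the normal frame $\{Y_j,Z_k,W\}$; this uses only hypothesis~\eqref{hp:non bifurcation} and holds regardless of whether $\rho_*\in\Spec(\D)$. From this one solves a linear system to produce a vector field $X(u,\psi;\rho)$ with $\langle X,\grad J^1\rangle=J^2$, $\langle X,\grad J^2\rangle=0$, and prescribed pairings with $Y_j,Z_k,W$, and then flows in the \emph{parameter} $\rho$ via $\frac{d}{d\rho}(u,\psi)=\widetilde{X}(u,\psi)$. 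Along this flow $J^2$ is conserved, the constraints $G_1,G_{2,k},G_3$ are preserved (so the trajectory stays in $\Neh_\rho$ for the moving $\rho$), and $J_\rho(u(\rho),\psi(\rho))\equiv 0$; Lemma~\ref{lemma:boundes on J2=r} controls the trajectory near $\{\psi=0\}$ so the flow is globally continuous. The time-$2\varepsilon_1$ map of this $\rho$-flow is the desired homeomorphism between neighborhoods of $\theta$ in the two sublevels. The point is that this construction never looks at the second variation at $\theta$ and therefore is insensitive to the degeneracy at $\rho_*=\lambda_k$.
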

We will assume  hypothesis~\eqref{hp:non bifurcation} until the proof of the above theorem, and we will take~$\delta>0$ and~$\varepsilon>0$ sufficiently small such that there are no non-trivial critical points of~$J_\rho$ in the neighborhood~$B_\delta(\theta)\cap \Neh_\rho$ for any~$\rho \in[\rho_*-\varepsilon,\rho_*+\varepsilon]$.
Such neighborhoods can of course be shrunk later if necessary.

Before the proof we need to state several lemmas.
We remark again that this result may be viewed as a nonlinear version of Krasnosel'skii Theorem and the proof goes in the spirit of~\cite{marinoprodi1968teoria}.

Introduce the following vector fields on~$B_\delta(\theta)\cap\Neh_\rho$:
\begin{align}
 Y_j(u)= &\parenthesis{(1-\Delta)^{-1}(x^j e^{2u}),\; 0}, \qquad j=1,2,3;\\
 Z_k(u,\psi;\rho)=& \big((1-\Delta)^{-1}(\Abracket{\D\psi-\rho e^u\psi,\delta_u\varphi_k(u)}-\rho e^u\Abracket{\psi,\varphi_k(u)}), \\
 &\qquad\qquad\qquad\qquad  |\D|^{-1}(\D\varphi_k(u)-\rho e^u\varphi_k(u))\big), \qquad k<0; \\
 W(u,\psi;\rho) =&\parenthesis{(1-\Delta)^{-1}(2e^{2u}+\rho e^u|\psi|^2),\; |\D|^{-1}(2\rho e^u \psi)}.
\end{align}
Note that
\begin{align}
 (1-\Delta)^{-1} x^j&=\frac{1}{3} x^j,  \qquad j=1,2,3; \\
 |\D|^{-1} \parenthesis{e^u \varphi_k(u)} &= \frac{1}{\lambda_k(u)} \varphi_k(u), \qquad \forall k<0,
\end{align}
with~$\lambda_k(u)=\lambda_k(0)+o(1)$ for~$u$ small, according to the  analytic dependence of the eigenvalues on the parameter~$u$.

\begin{lemma}
  There exist~$\varepsilon>0$ and~$\delta>0$ such that in the ball~$B_\delta(\theta)\subset H^1(\sph^2)\times H^{\frac{1}{2}}(S)$
 the above vector fields are linearly independent, for each~$\rho\in[\rho_*-\varepsilon, \rho_*+\varepsilon]$.
\end{lemma}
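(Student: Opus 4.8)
The plan is to recognise that, up to the explicit positive constants produced by $(1-\Delta)^{-1}$ and $|\D|^{-1}$, the vector fields $Y_j$, $Z_k$, $W$ are the $H^1\times H^{\frac12}$–gradients of the constraint functions $G_1$, $G_{2,k}$, $G_3$ cutting out $\Neh_\rho$, so that their linear independence at a point $(u,\psi)$ is exactly the injectivity of the linear map
\begin{equation*}
 L_{(u,\psi;\rho)}\colon (a,\mu,c)\longmapsto \sum_{j=1}^3 a_j Y_j(u)+\sum_{k<0}\mu_k Z_k(u,\psi;\rho)+c\,W(u,\psi;\rho)
\end{equation*}
on the Hilbert space $\R^3\times\ell^2_{|\lambda_k|}\times\R$, the weight $|\lambda_k|$ on the $\mu$–slot being the one that makes the defining series converge in $H^1\times H^{\frac12}$ (cf.\ the footnote and the proof of Lemma~\ref{lemma:Lagrange multiplier tends to zero}); I stress that only $\|u\|_{H^1}+\|\psi\|_{H^{\frac12}}<\delta$ will be used, not the Nehari relations. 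So I would fix $\varepsilon<\rho_*-1$ and assume $L_{(u,\psi;\rho)}(a,\mu,c)=0$; since $Y_j$ has no spinorial part and $(1-\Delta)^{-1}\colon H^{-1}\to H^1$, $|\D|^{-1}\colon H^{-\frac12}\to H^{\frac12}$ are isomorphisms, splitting into the two factors yields the \emph{spinorial identity}
\begin{equation*}
 \sum_{k<0}\mu_k\bigl(\D\varphi_k(u)-\rho e^u\varphi_k(u)\bigr)+2c\rho e^u\psi=0 \quad\text{in }H^{-\frac12}(S)
\end{equation*}
and the \emph{scalar identity}
\begin{multline*}
 \sum_{j=1}^3 a_j x^j e^{2u}+\sum_{k<0}\mu_k\bigl(\langle\D\psi-\rho e^u\psi,\delta_u\varphi_k(u)\rangle-\rho e^u\langle\psi,\varphi_k(u)\rangle\bigr)\\
 +c\bigl(2e^{2u}+\rho e^u|\psi|^2\bigr)=0 \quad\text{in }H^{-1}(\sph^2),
\end{multline*}
both series converging in the indicated spaces.

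The first step is to pair the spinorial identity with $\varphi_l(u)$, $l<0$. Using $\D\varphi_k(u)=\lambda_k(u)e^u\varphi_k(u)$ and the $e^u$–orthonormality $\int_{\sph^2} e^u\langle\varphi_k(u),\varphi_l(u)\rangle\dv=\delta_{kl}$, all cross terms collapse and one gets $\mu_l(\lambda_l(u)-\rho)=-2c\rho\,\beta_l$, where $\beta_l:=\int_{\sph^2} e^u\langle\psi,\varphi_l(u)\rangle\dv$. For $\delta$ small and $l<0$ one has $\lambda_l(u)<0<\rho$, hence $|\lambda_l(u)-\rho|\gtrsim|\lambda_l|\ge1$; moreover Bessel's inequality for the $L^2(e^u\dv)$–orthonormal system $\{\varphi_l(u)\}$ together with the Moser–Trudinger bound gives $\sum_{l<0}\beta_l^2\le\int_{\sph^2}e^u|\psi|^2\dv\lesssim\|\psi\|_{H^{\frac12}}^2$. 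Thus $\|\mu\|^2=\sum_{l<0}|\lambda_l|\mu_l^2\lesssim c^2\sum_{l<0}\beta_l^2\lesssim c^2\|\psi\|_{H^{\frac12}}^2\le C\delta^2c^2$, so $\mu$ is controlled by $c$; in particular $\sum_{l<0}|\mu_l||\beta_l|\le\|\mu\|\,(\sum_{l<0}\beta_l^2)^{1/2}\le C\delta^2|c|$.

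The second step is to test the scalar identity against the constant $1$ and against each $x^l$. The purely scalar contributions are handled by Moser–Trudinger together with \cite{changyang1987prescribing}: $\int x^j e^{2u}=\int x^j(e^{2u}-1)=o_\delta(1)$, the matrix $\Lambda(u)$ with entries $\Lambda_{jl}(u)=\int_{\sph^2} x^j x^l e^{2u}\dv$ equals $\Lambda(0)+o_\delta(1)$ and is therefore invertible with $\|\Lambda(u)^{-1}\|\le C$, $\int(2e^{2u}+\rho e^u|\psi|^2)\ge4\pi$, and $\int x^l(2e^{2u}+\rho e^u|\psi|^2)=o_\delta(1)$. For the $\mu$–block one resums $\sum_{k<0}\mu_k\langle\D\psi-\rho e^u\psi,\delta_u\varphi_k(u)\rangle=\langle\D\psi-\rho e^u\psi,\delta_u\phi\rangle$ with $\delta_u\phi=\sum_{k<0}\mu_k\delta_u\varphi_k(u)$, and invokes the estimate $\|\delta_u\phi\|_{H^{\frac12}}\lesssim\|\mu\|$ from the proof of Lemma~\ref{lemma:Lagrange multiplier tends to zero}, together with $\|\D\psi-\rho e^u\psi\|_{H^{-\frac12}}\lesssim\|\psi\|_{H^{\frac12}}\le\delta$, the bound $\sum_k|\mu_k||\beta_k|\le C\delta^2|c|$ from Step~1, and the same inequalities with a bounded factor $x^l$ inserted; all these $\mu$–terms are then $\lesssim\delta^2|c|$. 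Plugging everything in, with $\omega(\delta)\to0$ as $\delta\to0$, one arrives at $4\pi|c|\le\omega(\delta)|a|+C\delta^2|c|$ and $|\Lambda(u)a|\le(\omega(\delta)+C\delta^2)|c|$, hence $|c|\le\omega(\delta)|a|$ and $|a|\le C\omega(\delta)|c|$ for $\delta$ small, which forces $a=0$, $c=0$, and then $\mu=0$ by Step~1. This proves $L_{(u,\psi;\rho)}$ injective for all $(u,\psi)\in B_\delta(\theta)$ and all $\rho\in[\rho_*-\varepsilon,\rho_*+\varepsilon]$, which is the assertion.

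The main obstacle will be the terms containing $\delta_u\varphi_k(u)$: the pointwise bound $\|\delta_u\varphi_k(u)\|_{H^1}\le C(1+|\lambda_k(u)|)(1+\|\varphi_k(u)\|_{L^2})$ grows in $k$ and is too lossy to be summed termwise against $\mu\in\ell^2_{|\lambda_k|}$, so one really must pass to $\delta_u\phi$ and use the finer estimate $\|\delta_u\phi\|_{H^{\frac12}}\lesssim\|\mu\|$ already exploited in Lemma~\ref{lemma:Lagrange multiplier tends to zero}. The remaining, more routine, task is to verify that all the ingredients — the continuity $u\mapsto e^{tu}$ into every $L^p(\sph^2)$, the invertibility and continuity of $\Lambda(u)$, the $C^1$–dependence (Kato) and the sign $\lambda_{-1}(u)<0$ of the weighted eigendata, and all the constants above — are uniform for $(u,\psi)\in B_\delta(\theta)$ and $\rho\in[\rho_*-\varepsilon,\rho_*+\varepsilon]$; this is exactly where the smallness of $\delta$ and $\varepsilon$ must be fixed.
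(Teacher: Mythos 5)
Your argument is correct (at the same level of rigor as the paper, which also takes for granted the uniform control of the weighted eigendata $\lambda_k(u),\varphi_k(u)$ and the bound $\|\delta_u\phi\|_{H^{\frac12}}\lesssim\|\mu\|$ that is only asserted in the proof of Lemma~\ref{lemma:Lagrange multiplier tends to zero}), but it proceeds by a genuinely different route. The paper does not argue by contradiction from a vanishing combination: it computes the Gram matrix of the fields, namely the pairwise inner products $\Abracket{Y_j,Y_i}=\tfrac{4\pi}{9}\delta_{ij}+O(\|u\|)$, $\Abracket{Z_k,Z_l}=\tfrac{(\lambda_k(u)-\rho)(\lambda_l(u)-\rho)}{\lambda_k(u)}\delta_{kl}+O(\|\psi\|^2)$, $\Abracket{W,W}=4+O(\|u\|+\|\psi\|^2)$, with all off-diagonal blocks $O(\|\psi\|)$, and concludes that for $(u,\psi)\in B_\delta(\theta)$ the family is a small perturbation of an orthogonal system with diagonal entries bounded away from zero. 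You instead assume $\sum_j a_jY_j+\sum_{k<0}\mu_kZ_k+cW=0$, split into scalar and spinorial components, and test against $\varphi_l(u)$, the constant $1$ and the first spherical harmonics $x^l$, obtaining a homogeneous linear system in $(a,\mu,c)$ whose only solution is zero for $\delta,\varepsilon$ small; this is exactly the testing strategy the paper itself uses later for Lemmas~\ref{lemma:naturality}, \ref{lemma:grad J2 is linearly independent} and \ref{lemma:grad J1 is linearly independent}, so it fits the framework well. What the two approaches buy: the paper's almost-orthogonality is quantitative and is what is invoked afterwards ("the $Y_j$'s, $Z_k$'s and $W$ form a local frame for the normal bundle, which are almost orthogonal", and the uniform solvability of the linear system defining the vector field $X$, including the expansion~\eqref{eq:vector field X-normal part}); your proof as stated only yields injectivity of the coefficient map, though your estimates $\|\mu\|\lesssim\delta|c|$, $|c|\lesssim\omega(\delta)|a|$, $|a|\lesssim\omega(\delta)|c|$ are in fact uniform on $B_\delta(\theta)\times[\rho_*-\varepsilon,\rho_*+\varepsilon]$ and could be upgraded to the same quantitative statement with a little extra bookkeeping. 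Your observation that the termwise bound on $\delta_u\varphi_k(u)$ is too lossy and that one must estimate $\delta_u\phi$ as a whole is a genuine subtlety that the paper's Gram-matrix computation (which buries these terms in $O(\|\psi\|^2)$, not uniformly in $k$) glosses over, so your treatment is, on this point, more careful than the original.
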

    \begin{proof}
     We can estimate the following   inner products as:
        \begin{align}
        \Abracket{Y_j,Y_i}
        =& \int_{\sph^2} (1-\Delta)^{-1}(x^j e^{2u})\cdot x^i e^{2u} \dv \\
        =&\int_{\sph^2} (1-\Delta)^{-1}(x^j+2u x^j+ o(u)) \cdot (x^i+ 2u x^i+ o(u))\dv \\
        =&\int_{\sph^2} \frac{1}{3}x^j \cdot x^i \dv+O(\|u\|) \\
        =&\frac{4\pi}{9}\delta_{ij}+ O(\|u\|);
        \end{align}
        \begin{align}
         \Abracket{Z_k,Z_l}
         =&\int_{\sph^2}(1-\Delta)^{-1}\parenthesis{\Abracket{\D\psi-\rho e^u\psi,\delta_u\varphi_k(u)}-\rho e^u\Abracket{\psi,\varphi_k(u)}} \\
          &\qquad \qquad \qquad \cdot(\Abracket{\D\psi-\rho e^u\psi,\delta_u\varphi_l(u)}-\rho e^u\Abracket{\psi,\varphi_k(u)})\dv\\
          &\qquad+\int_{\sph^2}\Abracket{|\D|^{-1}(\D\varphi_k(u)-\rho e^u\varphi_k(u)), (\D\varphi_l-\rho e^u\varphi_l(u))}\dv \\
         =&O(\|\psi\|^2)+\int_{\sph^2}\Abracket{\frac{\lambda_k(u)-\rho}{\lambda_k(u)}\varphi_k(u),\parenthesis{\lambda_l(u)-\rho}e^u\varphi_l(u)}\dv\\
         =&\frac{(\lambda_k(u)-\rho)(\lambda_l-\rho)}{\lambda_k(u)}\delta_{kl}+O(\|\psi\|^2);
        \end{align}
        \begin{align}
         \Abracket{W,W}
         =&\int_{\sph^2}(1-\Delta)^{-1}(2e^{2u}+ e^u|\psi|^2)\cdot (2e^{2u}+\rho e^u|\psi|^2)\dv \\
          &+\int_{\sph^2}\Abracket{|\D|^{-1}(2\rho e^u\psi), 2\rho e^u\psi}\dv \\
         =&4+O(\|u\|+\|\psi\|^2);
        \end{align}
        \begin{align}
         \Abracket{Y_j,Z_k}
         =&\int_{\sph^2}(1-\Delta)^{-1}(x^j e^{2u})\cdot \parenthesis{\Abracket{\D\psi-\rho e^u\psi,\delta_u\varphi_k(u)}-\rho e^u\Abracket{\psi,\varphi_k(u)}}\dv
         =O(\|\psi\|);
        \end{align}
        \begin{align}
         \Abracket{Y_j,W}
         =&\int_{\sph^2}(1-\Delta)^{-1}(x^j e^{2u})\parenthesis{-\rho e^u\Abracket{\psi,\varphi_k(u)}}\dv  =O(\|\psi\|);
        \end{align}
        \begin{align}
         \Abracket{Z_k,W}
         =&O(\|\psi\|).
        \end{align}
        As a consequence of the last formulas, for~$(u,\psi)\in B_\delta(\theta)$ with~$\delta$ small, the above vector fields are linearly independent.
    \end{proof}

Introduce the~$\rho$-independent functionals
\begin{align}
 J^1(u,\psi)&\coloneqq\int_{\sph^2} |\nabla u|^2+2u+1-e^{2u}+2\Abracket{\D\psi,\psi}\dv, \\
 J^2(u,\psi)&\coloneqq\int_{\sph^2} 2 e^u|\psi|^2\dv.
\end{align}
Then we have
\begin{align}
 J_\rho(u,\psi)=J^1(u,\psi)-\rho J^2(u,\psi), & &
 \dd J_\rho(u,\psi)[v,h]
 =\dd J^1(u,\psi)[v,h]-\rho \dd J^2(u,\psi)[v,h].
\end{align}
The unconstrained gradients of the~$J^i$'s are
\begin{align}
  \grad J^1(u,\psi)=&\parenthesis{2(1-\Delta)^{-1}(-\Delta u+1-e^{2u}), \; 4|\D|^{-1}(\D\psi)}, \\
  \grad J^2(u,\psi)=&\parenthesis{2(1-\Delta)^{-1}(e^u|\psi|^2),
  \; 4|\D|^{-1}(e^u\psi)}.
 \end{align}

To find a deformation of the sublevels, we will focus on the level sets~$\{J_\rho=0\}$ with~$\rho$ close to~$\rho_*$, as it is done in the classical Morse theory.
More precisely, for~$\delta>0$ and~$\varepsilon>0$ small, let
\begin{align}
 \Omega\coloneqq &\braces{(u,\psi;\rho): (u,\psi)\in \Neh_\rho, \psi\neq 0, J_\rho(u,\psi)=0}, \\
 \Omega_\varepsilon\coloneqq &\braces{(u,\psi;\rho)\in\Omega: \rho\in[\rho_*-\varepsilon, \rho_*+\varepsilon]},\\
 M_\varepsilon\coloneqq& P(\Omega_\varepsilon),
\end{align}
where~$P\colon H^1(\sph^2)\times H^{\frac{1}{2}}(S)\times \R\to H^1(\sph^2)\times H^{\frac{1}{2}}(S) $ is the projection onto the first two factors.
Note that~$(u,\psi)\in M_\varepsilon$ implies that for some \emph{unique}~$\rho\in [\rho_*-\varepsilon, \rho_*+\varepsilon]$ such that~$J_\rho(u,\psi)=0$ and~$\psi\neq 0$.
In this case we will use~$\rho(u,\psi)$ to denote the dependence whenever~$(u,\psi)\in M_\varepsilon\cap B_\delta(\theta)$. For $u\in H^1(\mathbb{S}^2)$ we will write $u=\widehat{u}+\bar u$, where $\bar u$ is its average.

\begin{lemma}\label{lemma:boundes on J2=r}
 Let~$(u,\psi)\in B_\delta(\theta)\cap M_\varepsilon$.
 Suppose~$J^2(u,\psi)=4\pi r>0$ with~$r\ll \varepsilon<\delta$.
 Then, there exists a constant~$C>0$ such that
 \begin{equation}
  \|\nabla \widehat{u}\|^2_{L^2}+ |\bar{u}|+\|\psi\|^2_{H^{\frac{1}{2}}}\le Cr.
 \end{equation}
\end{lemma}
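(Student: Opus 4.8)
The plan is to extract quantitative information from the three defining constraints of $\Neh_\rho$ together with the level condition $J_\rho(u,\psi)=0$, all linearized around $\theta=(0,0)$. Write $u=\widehat u+\bar u$ with $\bar u=\fint u\dv$. From the constraint $G_3(u,\psi)=0$ we have $\fint(e^{2u}+\rho e^u|\psi|^2)\dv=1$, which upon Taylor-expanding $e^{2u}=1+2u+O(u^2)$ and using $\int x^j v=0$ is irrelevant here but using $\int_{\sph^2}\widehat u\dv=0$ gives
\begin{equation}
 2\bar u + O(\|u\|^2) + \frac{\rho}{4\pi}\int_{\sph^2}e^u|\psi|^2\dv = 0,
\end{equation}
so $2\bar u = -\rho\, r + O(\|u\|^2+\|\psi\|^2\|u\|)$ since $J^2(u,\psi)=\int 2e^u|\psi|^2=4\pi r$. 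In particular $|\bar u|\le C r + o(\delta)(\|u\|^2+\|\psi\|^2)$, and on $B_\delta(\theta)$ with $\delta$ small the error is absorbable, giving $|\bar u|\lesssim r$ modulo the remaining terms.

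Next I would use the level condition $J_\rho(u,\psi)=J^1(u,\psi)-\rho J^2(u,\psi)=0$, i.e.
\begin{equation}
 \int_{\sph^2}\bigl(|\nabla u|^2+2u+1-e^{2u}+2\Abracket{\D\psi,\psi}\bigr)\dv = 4\pi\rho r.
\end{equation}
Expanding $1-e^{2u}+2u = -2u^2+O(u^3)$ — more precisely $1+2u-e^{2u}$, whose integral against $1$ combined with the $\int 2u\dv = 2\cdot 4\pi\bar u$ term — and recalling that $\int_{\sph^2}|\nabla\widehat u|^2 - 2\widehat u^2 \ge c\|\nabla\widehat u\|_{L^2}^2$ by the spectral gap on $\Eigen(-\Delta;\{0,2\})^\perp$ (the first eigenvalue of $-\Delta$ on that subspace is $\mu_2=6>2$), one controls $\|\nabla\widehat u\|_{L^2}^2$ from above. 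For the spinor term, since $\psi\in N(u)\subset H^{\frac12,+}$-type subspace (spanned by weighted eigenspinors with $\lambda_j(u)>0$), and $\rho=\rho(u,\psi)$ is close to $\rho_*$ which is strictly between consecutive eigenvalues away from $\lambda_1$ — or more carefully, using that on $N(u)$ one has $\int\Abracket{\D\psi,\psi}\ge \lambda_1(u)\int e^u|\psi|^2 \ge (1-o(1))\cdot 2\pi r$ — the quantity $2\int\Abracket{\D\psi,\psi} - 2\rho\int e^u|\psi|^2$ is bounded below by a multiple of $\|\psi\|_{H^{1/2}}^2$ only if $\rho<\lambda_1(u)$, which fails; so instead I would bound $\int\Abracket{\D\psi,\psi}\le C\|\psi\|_{H^{1/2}}^2$ trivially and, crucially, use the Nehari constraint $G_3=0$ to write $\rho\int e^u|\psi|^2 = 4\pi - \int e^{2u}$, converting the spinor energy into a scalar quantity of size $O(\|u\|)$. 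Combining, the level equation becomes $\|\nabla\widehat u\|_{L^2}^2 + (\text{controlled lower-order}) = O(r) + O(\bar u)+ O(\|u\|^2)$, and feeding in $|\bar u|\lesssim r$ closes the estimate for $\|\nabla\widehat u\|_{L^2}^2$.

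Finally, to bound $\|\psi\|_{H^{1/2}}^2$ I would return to $J^2(u,\psi)=\int 2e^u|\psi|^2 = 4\pi r$: since $u$ is small in $H^1$, hence $e^u$ is bounded above and below in $L^p$ for all $p<\infty$, and since $\psi$ lies in the positive part $N(u)$ of the weighted spectrum with all weighted eigenvalues bounded away from zero uniformly for $u\in B_\delta$, we get $\|\psi\|_{L^2}^2 \le C\int e^u|\psi|^2 \le Cr$, and then the equivalence $\|\psi\|_{H^{1/2}}^2\sim \int\Abracket{|\D|\psi,\psi}\sim \sum|\lambda_j(u)|\,|a_j|^2$, together with the bound $\int\Abracket{\D\psi,\psi}\le\rho\int e^u|\psi|^2 + |J_\rho - J^1_{\text{scalar part}}|$ extracted again from the level condition, upgrades the $L^2$ bound to the $H^{1/2}$ bound $\|\psi\|_{H^{1/2}}^2\le Cr$. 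Adding the three pieces gives $\|\nabla\widehat u\|_{L^2}^2+|\bar u|+\|\psi\|_{H^{1/2}}^2\le Cr$, as claimed. The main obstacle I anticipate is the spinor estimate: because the Hessian is indefinite in the spinor direction and $\rho>1$ may lie above $\lambda_1(u)$, one cannot get coercivity directly from $J_\rho$, and the essential trick is to trade the spinorial quadratic term for the scalar term $4\pi-\int e^{2u}$ via the Nehari constraint $G_3=0$, after which everything reduces to the (coercive, by spectral gap) scalar part. Care is also needed that $\rho=\rho(u,\psi)$ is only defined implicitly, but $|\rho-\rho_*|\le\varepsilon$ suffices since all constants may depend on $\rho_*$ and the interval $[\rho_*-\varepsilon,\rho_*+\varepsilon]$.
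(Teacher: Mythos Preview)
Your approach is genuinely different from the paper's and, while salvageable, has a real gap as written.

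The paper does not Taylor-expand. It combines $G_3=0$ (giving $\fint e^{2u}=1-\rho r$) with Jensen's inequality to obtain $\bar u\le 0$, and then invokes the \emph{improved Moser--Trudinger inequality} (valid precisely because of the balancing constraint $G_1(u)=0$, i.e.\ $\CM(e^{2u})=0$) to get the matching lower bound $\bar u\ge -2\rho r$. Once $|\bar u|\le C r$ is secured this way, the level condition $J_\rho=0$ together with $\int\Abracket{\D\psi,\psi}\ge 0$ on $N(u)$ immediately gives $\fint|\nabla u|^2\le 5\rho r$ and $\fint\Abracket{\D\psi,\psi}\le 5\rho r$, with no spectral gap and no bootstrap. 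The final spinor step is the one you describe.

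Your route via Taylor expansion of $G_3$ yields only $|\bar u|\le Cr + C\|u\|_{H^1}^2$, which is circular until $\|u\|_{H^1}^2$ is itself controlled by $r$; this is what forces you into the spectral-gap argument. The gap is there: you assert $\int|\nabla\widehat u|^2-2\widehat u^2\ge c\|\nabla\widehat u\|_{L^2}^2$ ``by the spectral gap on $\Eigen(-\Delta;\{0,2\})^\perp$'', but $\widehat u$ is \emph{not} in that subspace. Mean zero gives $\widehat u\perp\Eigen(-\Delta;0)$; orthogonality to $\Eigen(-\Delta;2)=\Span\{x^1,x^2,x^3\}$ would need $\int x^j u=0$, whereas the Nehari constraint is $\int x^j e^{2u}=0$. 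Linearizing the latter gives only $\int x^j u=O(\|u\|_{H^1}^2)$, so the projection of $\widehat u$ onto $\Eigen(-\Delta;2)$ is merely $O(\|u\|^2)$-small. That suffices to rescue the argument---the spectral-gap inequality then holds up to an $O(\|u\|^4)$ error---but you must actually invoke $G_1=0$ here and then carry out the closing step ($B\le Cr+C(A+B^2)$, $A\le Cr+CB^2+C\delta(A+B^2)$, hence $B\le C'r$ for $\delta$ small) explicitly; ``the error is absorbable'' is not a proof.

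In short: the paper exploits $G_1=0$ through the improved Moser--Trudinger inequality and avoids any circularity; you would exploit it through approximate $L^2$-orthogonality to the first spherical harmonics. Both can be made to work, but yours needs the missing orthogonality observation and the bootstrap written out.
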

    \begin{proof}
     By definition, there exists~$\rho\in[\rho_*-\varepsilon, \rho_*+\varepsilon]$ such that~$(u,\psi)\in \Neh_\rho$ and~$J_\rho(u,\psi)=0$.
     In particular by  assumption
     \begin{align}
      \fint_{\sph^2} e^{2u}\dv =1-\rho\fint_{\sph^2}e^u|\psi|^2\dv
      =1-\rho \, r.
     \end{align}
     By Jensen's inequality,
     \begin{equation}
      e^{2\bar{u}}\le\fint_{\sph^2} e^{2u}\dv =1-\rho \, r,
     \end{equation}
     thus~$\bar{u}\le 0$.
     On the other hand, since~$\CM(e^{2u})=0$, an improved Moser-Trudinger inequality in~\cite{guimoradifam2018sphere} implies
     \begin{align}
      1-\rho \, r=\fint_{\sph^2}e^{2u}\dv
       \le \exp\parenthesis{\frac{1}{2}\fint_{\sph^2}|\nabla u|^2\dv+2\bar{u}}.
     \end{align}
     The  condition~$J_\rho(u,\psi)=0$ implies
     \begin{align}\label{eq:bounds on J2=r:level condition}
      \fint_{\sph^2} |\nabla u|^2+\Abracket{\D\psi,\psi}\dv
      =\rho \, r-2\bar{u}.
     \end{align}
     Moreover, since~$G_{2,k}(u,\psi)=0$ for all~$k<0$, the Dirac part is non-negative, and hence~$\fint_{\sph^2}|\nabla u|^2\dv\le \rho \, r-2\bar{u}$.
     Hence
     \begin{align}
      1-\rho \, r \le \exp\parenthesis{\frac{1}{2}\rho \, r+\bar{u}},
     \end{align}
     which gives a lower bound on~$\bar{u}$:
     \begin{equation}
      \bar{u} \ge \ln(1-\rho \, r)-\frac{1}{2}\rho \, r\ge -2\rho \, r.
     \end{equation}
     Therefore~$|\bar{u}|\le 2\rho \, r$ so by~\eqref{eq:bounds on J2=r:level condition} we see that~$\fint_{\sph^2}|\nabla u|^2\dv\le 5\rho \, r$, and
     \begin{equation}\label{eq:bound for psi}
      \fint_{\sph^2} \Abracket{\D\psi,\psi}\dv \le 5\rho \, r.
     \end{equation}
     In terms of the weighted basis~$\{\varphi_j(u)\}$ introduced in Section~\ref{section:natural constraint}, we can write~$\psi=\sum_{j>0}a_j(u,\psi)\varphi_j(u)$ with~$a_j(u,\psi)\in \R$ being the coefficients of the expansion, and~$\parenthesis{a_j(u,\psi)|\lambda_j(u)|^{1/2}}_j\in \ell^2$.
     Then~\eqref{eq:bound for psi} implies
     \begin{equation}
      0\le\sum_{j>0} a_j(u,\psi)^2 \lambda_j(u)\le 20\pi \rho \, r.
     \end{equation}
     Since~$\|u\|_{H^1}^2\le C\rho \, r$ we have~$\lambda_j(u)$ close to~$\lambda_j(0)$. It follows that~$\|\psi\|^2_{H^{\frac{1}{2}}(S_g)}\le C\rho \, r$.
    \end{proof}

\begin{lemma}\label{lemma:grad J2 is linearly independent}
 For~$\delta$ and~$\varepsilon$ small, ~$\grad J^2$ is linearly independent of~$Y_j$'s, $Z_k$'s and~$W$ on~$M_\varepsilon\cap B_\delta(\theta)$.
\end{lemma}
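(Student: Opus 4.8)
The plan is to argue that any nontrivial linear combination of $\grad J^2$ with the vector fields $Y_j$ ($j=1,2,3$), $Z_k$ ($k<0$) and $W$ must vanish identically, and then to derive a contradiction by inspecting the leading-order behavior of this combination as $(u,\psi)\to\theta$. The key structural fact is that all of $Y_j$, $Z_k$ and $W$ have \emph{small} spinorial component near $\theta$: indeed the spinor part of $Y_j$ is zero, that of $Z_k$ is $|\D|^{-1}(\D\varphi_k(u)-\rho e^u\varphi_k(u)) = \frac{\lambda_k(u)-\rho}{\lambda_k(u)}\varphi_k(u)$ which lives in the \emph{negative} weighted spectral subspace $\Span\{\varphi_k(u):k<0\}$, and that of $W$ is $|\D|^{-1}(2\rho e^u\psi) = O(\|\psi\|)$. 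By contrast the spinor component of $\grad J^2$ is $4|\D|^{-1}(e^u\psi)$, which is essentially $4\sum_{j}\tfrac{1}{\lambda_j(u)}a_j(u,\psi)\varphi_j(u)$ and, since $\psi\in N(u)$ has $P_u^-\psi=0$, lies entirely in the \emph{positive} weighted spectral subspace modulo lower-order terms.

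First I would suppose, for contradiction, that there are coefficients $\beta, \alpha_j, \mu_k, \sigma\in\R$, not all zero, with
\begin{equation}
 \beta\,\grad J^2(u,\psi) = \sum_{j=1}^3 \alpha_j Y_j(u) + \sum_{k<0}\mu_k Z_k(u,\psi;\rho) + \sigma\, W(u,\psi;\rho)
\end{equation}
at some point $(u,\psi)\in M_\varepsilon\cap B_\delta(\theta)$. Projecting onto the spinorial factor and then onto the positive weighted subspace $\Span\{\varphi_j(u):j>0\}$ (under which $Y_j$ and $Z_k$ contribute nothing), one gets $\beta\, |\D|^{-1}(e^u\psi)^+ = \sigma\, |\D|^{-1}(2\rho e^u\psi)^+$ modulo the $O(\|\psi\|)$ correction in $W$; testing against $\psi$ itself and using the weighted orthonormality, $J^2(u,\psi)>0$, together with Lemma~\ref{lemma:boundes on J2=r} which controls $\|\psi\|^2_{H^{1/2}}$ by the small parameter $r = J^2(u,\psi)/4\pi$, forces $\beta = 2\rho\sigma + o(1)$ and in particular ties $\beta$ to $\sigma$. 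Meanwhile, projecting the spinorial equation onto the negative subspace $\Span\{\varphi_k(u):k<0\}$ kills $\grad J^2$ (again up to $o(\|\psi\|)$ which is $o(\sqrt r)$) and $Y_j$, leaving $\sum_{k<0}\mu_k\frac{\lambda_k(u)-\rho}{\lambda_k(u)}\varphi_k(u) = O(\sqrt r)$; since $\lambda_k(u)$ stays bounded away from $\rho$ for $k<0$ (as $\lambda_k(u)<0<1\le\rho$) and away from $0$, and the $\varphi_k(u)$ are weighted-orthonormal, this gives $\sum_{k<0}|\lambda_k|\,\mu_k^2 = O(r)$, hence $\mu_k\to 0$ as $\delta\to 0$; more precisely $\|\sum_k\mu_k Z_k\| = O(\sqrt r)$.

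Next I would turn to the scalar factor. The scalar component of $\grad J^2$ is $2(1-\Delta)^{-1}(e^u|\psi|^2) = O(\|\psi\|^2) = O(r)$, that of $Z_k$ is $O(\|\psi\|) = O(\sqrt r)$ times $\mu_k$ hence altogether $O(r)$ by the previous paragraph, and that of $W$ is $2(1-\Delta)^{-1}(2e^{2u}+\rho e^u|\psi|^2) = \frac{4}{3}\cdot(\text{const.}) + O(\sqrt r)$ — crucially its average is bounded below, of order $1$. The scalar part of $\sum_j\alpha_j Y_j$ is $\sum_j\alpha_j(1-\Delta)^{-1}(x^j e^{2u}) = \tfrac13\sum_j\alpha_j x^j + O(\|u\|\cdot|\alpha|)$, which is $L^2$-orthogonal to constants up to $O(\sqrt r\,|\alpha|)$. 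Taking the $L^2$ inner product of the scalar equation with the constant function $1$ therefore yields $O(r) = \sigma\cdot(c_0 + O(\sqrt r))$ with $c_0\neq 0$, forcing $\sigma = O(r)\to 0$, hence $\beta\to 0$ by the relation from the second paragraph, hence $\sum_k\mu_k Z_k\to 0$, and finally projecting the scalar equation onto $\Span\{x^1,x^2,x^3\}$ and using that the Gram matrix of the $Y_j$'s is $\frac{4\pi}{9}\mathrm{Id}+O(\|u\|)$ (invertible) forces $\alpha_j\to 0$ as well. Normalizing the coefficient vector $(\beta,\alpha,\mu,\sigma)$ to have norm $1$ and letting $\delta\to 0$ then produces the desired contradiction.

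The main obstacle I anticipate is making the two spectral projections (onto the positive and negative weighted subspaces for $e^{-u}\D$) genuinely rigorous as bounded operators on $H^{1/2}(S)$ with norms uniform for $u$ in a small $H^1$-ball, and keeping careful track of how the $\delta_u\varphi_k(u)$-type remainder terms in $Z_k$ and the $e^u$-weight corrections interact with these projections; once the bookkeeping of orders ($O(r)$ in the scalar slot versus $O(\sqrt r)$ or $O(1)$ contributions, and the positive/negative splitting in the spinor slot) is set up cleanly, the contradiction argument is routine, but the estimates in Lemma~\ref{lemma:boundes on J2=r} are exactly what is needed to prevent the spinorial terms from degenerating and must be invoked at the right moment.
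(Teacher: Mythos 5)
Your proposal circles around the right ideas---kill the $\mu_k$'s via the negative weighted subspace, tie the coefficient of $\grad J^2$ to that of $W$ via the positive part of the spinorial slot, and then test the scalar slot against the constant function---but the paper's own argument is considerably more direct, and the extra machinery you introduce creates a genuine technical obstacle that the paper sidesteps entirely.

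The key difference is that the paper first applies $(1-\Delta)$ and $|\D|$ to the gradient relation, turning
\begin{equation}
\grad J^2=\sum_{j}\alpha_j Y_j+\sum_{k<0}\mu_k Z_k+\tau W
\end{equation}
into the pair of \emph{stripped} equations
\begin{equation}
2e^u|\psi|^2=\sum_{j}\alpha_j x^j e^{2u}+\sum_{k<0}\mu_k(\cdots)+\tau(2e^{2u}+\rho e^u|\psi|^2),\qquad 4e^u\psi=\sum_{k<0}\mu_k(\D\varphi_k(u)-\rho e^u\varphi_k(u))+2\tau\rho e^u\psi,
\end{equation}
and only then tests against $\varphi_l(u)$ and against $1$. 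Because the pairing is now plain $L^2$, the weighted orthonormality of the $\varphi_k(u)$ gives $\mu_l=0$ \emph{exactly}, and since $e^u>0$ and $\psi\not\equiv 0$ on $M_\varepsilon$, the spinor equation then forces $\rho\tau=2$ \emph{exactly}---no $o(1)$ corrections. Testing the scalar equation against $1$ and using $G_1(u)=0$, $G_3(u,\psi)=0$ then shows $\tau$ must be of the order of $J^2(u,\psi)$, which is small in $B_\delta(\theta)$, contradicting $\tau=2/\rho$. Your version instead keeps the operators $|\D|^{-1}$ and $(1-\Delta)^{-1}$ in place and tries to project the spinor slot onto the positive and negative \emph{weighted} ($e^{-u}\D$) spectral subspaces. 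As you yourself flag at the end, this is a real obstacle: the $\varphi_k(u)$ are eigenspinors of $e^{-u}\D$, not of $|\D|$, so $|\D|^{-1}$ does not diagonalize in that basis and does not commute with $P_u^\pm$ except up to error terms of order $\|u\|_{H^1}$ that have to be tracked. The paper's stripping step makes all of this disappear. Your asymptotic normalization argument (normalize $(\beta,\alpha,\mu,\sigma)$, let $\delta\to 0$, show everything is $o(1)$) is also valid in principle but heavier than needed, since the relations $\mu_k=0$ and $\rho\tau=2$ are exact and already give the contradiction with one test against $1$; there is no need to separately bound the $\alpha_j$'s. Finally, note a small factor slip: the spinor component of $\grad J^2$ is $4|\D|^{-1}(e^u\psi)$, so your proportionality should read $4\beta=2\rho\sigma$ rather than $\beta=2\rho\sigma$; this does not affect the logic but does match the paper's $\rho\tau=2$ when $\beta=1$.
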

    \begin{proof}
     Suppose that
     \begin{equation}
      \grad J^2=\sum_{j=1}^3\alpha_j Y_j+\sum_{k<0}\mu_kZ_k+\tau W,
     \end{equation}
     namely
     \begin{align}
      2e^u|\psi|^2
      =&\sum_{j=1,2,3}\alpha_j x^j e^{2u}+\sum_{k<0}\mu_k \parenthesis{\Abracket{\D\psi-\rho e^u\psi,\Delta_u\varphi_k(u)}-\rho e^u\Abracket{\psi,\varphi_k(u)} }+\tau\parenthesis{2e^{2u}+\rho e^u|\psi|^2}, \\
      4e^u\psi
      =&\sum_{k<0}\mu_k\parenthesis{\D\varphi_k(u)-\rho e^u\varphi_k(u)} +2\tau\rho e^u\psi.
     \end{align}
     Testing the equation for the spinor against~$\varphi_l(u)$, we see that~$\mu_l=0$ for each~$l<0$.
     Since~$\psi\neq 0$, we conclude from the spinor equation that~$\rho \, \tau=2$.
     Then testing the scalar component of the equation  against the constant function~$1$ we see that~$\tau=0$, a contradiction.
    \end{proof}

\begin{lemma}\label{lemma:grad J1 is linearly independent}
 Assume~\eqref{hp:non bifurcation} holds.
 For~$\delta$ and~$\varepsilon$ small,~$\grad J^1$ is linearly independent of \,$\grad J^2$ and of \,$Y_j$'s, $Z_k$'s and~$W$ on~$M_\varepsilon\cap B_\delta(\theta)$.
\end{lemma}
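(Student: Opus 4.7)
\medskip

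\noindent\textbf{Proof plan.} The strategy is by contradiction. Suppose the conclusion fails: then there exist sequences $\delta_n, \varepsilon_n \to 0$ and points $(u_n,\psi_n) \in M_{\varepsilon_n} \cap B_{\delta_n}(\theta)$ at which $\grad J^1$ lies in the linear span of $\grad J^2$ and the fields $Y_j, Z_k, W$. Let $\rho_n \coloneqq \rho(u_n,\psi_n) \in [\rho_*-\varepsilon_n,\rho_*+\varepsilon_n]$, so $\rho_n \to \rho_*$.

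\smallskip

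First, by the preceding Lemma~\ref{lemma:grad J2 is linearly independent}, $\grad J^2$ is itself independent of the $Y_j$'s, $Z_k$'s and $W$, so in the nontrivial relation the coefficient of $\grad J^1$ cannot vanish. Normalizing and setting $c_n \in \R$ to be the resulting scalar, the dependence takes the form
\begin{equation}
 \grad J^1(u_n,\psi_n) - c_n \grad J^2(u_n,\psi_n)
 = \sum_{j=1}^{3} \alpha^{(n)}_j Y_j + \sum_{k<0} \mu^{(n)}_k Z_k + \tau^{(n)} W,
\end{equation}
with the vector fields evaluated at $(u_n,\psi_n;\rho_n)$. This is precisely the unconstrained Euler--Lagrange equation for $J_{c_n}$ modulo the normal space to $\Neh_{\rho_n}$ at $(u_n,\psi_n)$.

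\smallskip

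The next step is to control $c_n$. Testing the equation against the pair $(1,0)$ (as in the proof of Lemma~\ref{lemma:naturality} and Lemma~\ref{lemma:Lagrange multiplier tends to zero}), one extracts $\tau^{(n)}$; testing against $\varphi_k(u_n)$ extracts $\mu^{(n)}_k$; and testing against the affine functions extracts $\alpha^{(n)}_j$ via the invertibility of the matrix $\Lambda(u_n)$. Combining these with the bounds from Lemma~\ref{lemma:boundes on J2=r} (which gives $\|\widehat{u}_n\|_{H^1}^2+|\bar{u}_n|+\|\psi_n\|_{H^{1/2}}^2 \to 0$), one concludes that the Lagrange multipliers $(\alpha^{(n)},\mu^{(n)},\tau^{(n)})$ tend to zero and, furthermore, that $c_n$ stays bounded. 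Passing to a subsequence, $c_n \to c_\infty$, and a further test, exploiting $J_{\rho_n}(u_n,\psi_n)=0$ together with $\rho_n\to\rho_*$, forces $c_\infty = \rho_*$.

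\smallskip

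The remaining task is to convert this information into an honest bifurcation, in contradiction to hypothesis~\eqref{hp:non bifurcation}. The manifolds $\Neh_{\rho_n}$ and $\Neh_{c_n}$ all pass through $\theta$, and their defining functionals $G_{2,k}, G_3$ depend smoothly on the parameter. Since $|c_n - \rho_n| \to 0$ and $(u_n,\psi_n)\to\theta$, the implicit function theorem (applied to the constraint map $G=(G_1,G_{2,k},G_3)$ on a neighborhood of $\theta$) produces a projection $(\tilde u_n,\tilde\psi_n) \in \Neh_{c_n}$ with $\|(\tilde u_n,\tilde\psi_n)-(u_n,\psi_n)\| = o(\|(u_n,\psi_n)\|)$. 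The displayed equation then shows that the constrained gradient of $J_{c_n}$ on $\Neh_{c_n}$ at $(\tilde u_n,\tilde\psi_n)$ vanishes up to an error that is small compared to $\|(\tilde u_n,\tilde \psi_n)\|$. A final quantitative application of the inverse function theorem (using that for $\rho$ close to $\rho_*$ but away from the spectrum the Hessian of $J_\rho|_{\Neh_\rho}$ at $\theta$ is an isomorphism on an appropriate finite-codimensional complement, as in Section~\ref{sect:local critial groups}) upgrades this approximate critical point to a genuine non-trivial critical point of $J_{\sigma_n}|_{\Neh_{\sigma_n}}$ for some $\sigma_n \to \rho_*$, violating \eqref{hp:non bifurcation}.

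\smallskip

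\noindent\textbf{Main obstacle.} The subtlety lies in Step~3/4: the point $(u_n,\psi_n)$ belongs to $\Neh_{\rho_n}$, but the relation we derive is an Euler--Lagrange identity for the functional $J_{c_n}$. Reconciling the mismatch between $\rho_n$ and $c_n$---i.e.\ showing that the projection onto $\Neh_{c_n}$ is a higher-order perturbation and that the resulting approximate critical point can be corrected to a true one---is where the non-bifurcation hypothesis enters in an essential way, and this is where the most care is required.
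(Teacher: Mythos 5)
Your proof takes a genuinely different and substantially more roundabout route than the paper's, and the detour introduces a gap at precisely the point you flag as the "main obstacle." The paper's argument is a single direct computation, not a limiting argument: at an arbitrary $(u,\psi)\in M_\varepsilon\cap B_\delta(\theta)$ one assumes the linear relation $\grad J^1=\lambda\,\grad J^2+\sum\alpha_j Y_j+\sum\mu_k Z_k+\tau W$, tests the spinor component against the $\varphi_k(u)$'s to kill the $\mu_k$'s, tests the scalar component against $1$ (using $G_1=0$, $G_3=0$) to kill $\tau$, and then --- this is the crucial step you omit --- invokes the same M\"obius/Kazdan--Warner conservation-law argument used in Lemma~\ref{lemma:naturality} (via Lemma~\ref{lemma:conservation law on mixed term}) to force $\alpha_j=0$. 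Once all Lagrange multipliers vanish, the test against $1$ reduces to $2(\rho-\lambda)\int e^u|\psi|^2\dv=0$, and since $\psi\neq0$ on $M_\varepsilon$ this gives $\lambda=\rho$ exactly. There is therefore no ``mismatch'' between $\rho$ and the collinearity coefficient to reconcile: $(u,\psi)$ is literally a non-trivial constrained critical point of $J_\rho|_{\Neh_\rho}$, contradicting \eqref{hp:non bifurcation} outright, with no limit, no projection onto a neighboring Nehari manifold, and no inverse function theorem.

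Your final IFT step is the genuine gap. You propose to project $(u_n,\psi_n)$ onto $\Neh_{c_n}$ and then "upgrade" the resulting approximate critical point of $J_{c_n}|_{\Neh_{c_n}}$ to a genuine one using the non-degeneracy of the Hessian at $\theta$. But non-degeneracy of the Hessian at $\theta$ is exactly the statement that $\theta$ is an \emph{isolated} critical point and that the only solution the inverse function theorem will produce in a small ball is $\theta$ itself. To conclude the existence of a \emph{distinct} non-trivial critical point you would need a quantitative lower bound on the distance of $(\tilde u_n,\tilde\psi_n)$ from $\theta$ that beats both the size of the error term and the blow-up of the inverse Hessian as $c_n,\rho_n\to\rho_*\in\Spec(\D)$; you do not provide this, and it is not clear it can be arranged. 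Relatedly, you state that "testing against the affine functions extracts $\alpha^{(n)}_j$" and that the multipliers tend to zero, but this only says $\alpha^{(n)}_j\to0$, leaving a residual error $\sum_j\alpha^{(n)}_j x^j e^{2u_n}$ in the scalar equation that you would still need to control in the IFT; the conservation-law identity is what lets the paper kill this term \emph{exactly}, at each fixed $(u,\psi)$, rather than only asymptotically.
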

    \begin{proof}
     Suppose that
     \begin{equation}
      \grad J^1=\lambda\grad J^2+\sum_{j=1}^3\alpha_j Y_j+\sum_{k<0}\mu_kZ_k+\tau W,
     \end{equation}
     for some~$\lambda,\alpha_j,\mu_k,\tau\in\R$.
     Expressed in components, we have
     \begin{align}
      2(-\Delta u+1-e^{2u})
      =&2\lambda e^u|\psi|^2
       +\sum_{j=1,2,3}\alpha_j x^j e^{2u} \\
       &+\sum_{k<0}\mu_k \parenthesis{\Abracket{\D\psi-\rho e^u\psi,\Delta_u\varphi_k(u)}-\rho e^u\Abracket{\psi,\varphi_k(u)} }+\tau\parenthesis{2e^{2u}+\rho e^u|\psi|^2}, \\
       4\D\psi
       =&4\lambda e^u\psi
       +\sum_{k<0}\mu_k\parenthesis{\D\varphi_k(u)-\rho e^u\varphi_k(u)} +2\tau\rho e^u\psi.
     \end{align}
     Testing the spinor equation against~$\varphi_k(u)$ we find that~$\mu_k=0$ for all~$k<0$.
     Then testing the scalar equation against the constant function~$1$, noting that~$G_1(u)=0$ and~$G_3(u,\psi)=0$, we have
     \begin{equation}\label{eq:lambda-rho-tau}
      2\rho\int_{\sph^2} e^u|\psi|^2\dv
      =2\lambda\int_{\sph^2} e^u|\psi|^2\dv
       +\tau\int_{\sph^2}1+e^{2u}\dv.
     \end{equation}
     Since~$(u,\psi)\in B_\delta(\theta)$ and~$\delta$ is small, we conclude that~$\tau=0$.
     Then we are in a situation similar to that of Lemma~\ref{lemma:naturality}.
     The same argument via M\"obius invariance implies that~$\alpha_j=0$ for~$j=1,2,3$.
     Now since~$\psi\neq 0$,~\eqref{eq:lambda-rho-tau} implies~$\lambda=\rho$.
     Thus~$(u,\psi)\in\Neh_\rho$ is a non-trivial critical point of~$\Neh_\rho$, contradicting  hypothesis~\eqref{hp:non bifurcation}.
    \end{proof}

The vector fields~$Y_j$'s,~$Z_k$'s and~$W$ form a local frame for the normal bundle~$T^\perp \Neh_\rho$ on~$B_\delta(\theta)\cap \Neh_\rho$, which are almost orthogonal.
We denote the tangent parts of the gradients of~$J^i$,~($i=1,2$), by~$\nabla^{\Neh_{\rho}} J^i$.
Next we show that the latter constrained gradients are \emph{not} collinear in a uniform sense wherever~$J^2(u,\psi)$ is strictly away from zero.
The collinearity of the two constrained gradients is measured by the determinant of the following matrix
\begin{equation}
 \begin{pmatrix}
  \Abracket{\nabla^{\Neh_\rho}J^1(u,\psi),\nabla^{\Neh_\rho}J^1(u,\psi)} & \Abracket{\nabla^{\Neh_\rho}J^1(u,\psi),\nabla^{\Neh_\rho}J^2(u,\psi)} \vspace{0.2cm}\\
  \Abracket{\nabla^{\Neh_\rho}J^2(u,\psi),\nabla^{\Neh_\rho}J^1(u,\psi)}
  &\Abracket{\nabla^{\Neh_\rho}J^2(u,\psi),\nabla^{\Neh_\rho}J^2(u,\psi)}
 \end{pmatrix},
\end{equation}
which is
\begin{equation}
 \det(J^1, J^2)(u,\psi;\rho)\equiv
 \|\nabla^{\Neh_\rho}J^1(u,\psi)\|^2\|\nabla^{\Neh_\rho}J^2(u,\psi)\|^2
 -\Abracket{\nabla^{\Neh_\rho}J^1(u,\psi),\nabla^{\Neh_\rho}J^2(u,\psi)}^2,
\end{equation}
and it is non-negative by the Cauchy-Schwarz inequality.
Recall that in~$M_\varepsilon\cap B_\delta(\theta)$,~$\psi\neq0$ and hence~$J^2(u,\psi)\neq 0$ and~$\grad J^2(u,\psi)\neq 0$.
Thus we can write
\begin{align}
  \det(J^1,J^2)(u,\psi,\rho)
  =\|\nabla^{\Neh_\rho} J^2(u,\psi)\|^2
  \left\| \nabla^{\Neh_\rho} J^1(u,\psi)-\Abracket{\nabla^{\Neh_\rho} J^1,
  \frac{\nabla^{\Neh_\rho}J^2}{\|\nabla^{\Neh_\rho}J^2\|}
  }
  \frac{\nabla^{\Neh_\rho}J^2}{\|\nabla^{\Neh_\rho}J^2\|}(u,\psi)
  \right\|^2.
 \end{align}
We deal with the right-hand sides separately.
\begin{lemma}
 There exists a modulus of continuity~$\kappa\colon[0,\delta]\to[0,1]$ such that
 \begin{equation}
  \|\nabla^{\Neh_\rho}J^2(u,\psi)\|^2\ge \kappa(J^2(u,\psi)).
 \end{equation}
\end{lemma}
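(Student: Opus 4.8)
The plan is to give a lower bound on $\|\nabla^{\Neh_\rho}J^2(u,\psi)\|^2$ in terms of $J^2(u,\psi)$ alone, uniformly for $(u,\psi;\rho)\in M_\varepsilon\cap B_\delta(\theta)$. The key point is that the \emph{unconstrained} gradient $\grad J^2(u,\psi)=\bigl(2(1-\Delta)^{-1}(e^u|\psi|^2),\,4|\D|^{-1}(e^u\psi)\bigr)$ already has spinorial part essentially of size $\|\psi\|$, which cannot be small relative to $J^2$ near the origin; and the projection onto the normal directions $Y_j$, $Z_k$, $W$ only removes a controlled amount. First I would test: pairing $\grad J^2$ against the admissible tangent direction $(0,\psi)\in T_{(u,\psi)}\Neh_\rho$ (note $\psi\in N(u)$ so $(0,\psi)$ is tangent modulo the $G_3$-constraint; one uses instead the tangential projection of $(0,\psi)$, which differs from it by $O(\|u\|+\|\psi\|^2)$ times $\psi$), one gets
\begin{equation}
 \Abracket{\nabla^{\Neh_\rho}J^2(u,\psi),(0,\psi)}
 = \Abracket{\grad J^2(u,\psi),(0,\psi)} + O(\text{normal corrections})
 = 4\int_{\sph^2} e^u|\psi|^2\dv + \text{l.o.t.},
\end{equation}
because $\Abracket{4|\D|^{-1}(e^u\psi),\psi}_{H^{1/2}} = 4\int\Abracket{e^u\psi,\psi}\dv = 2J^2(u,\psi)$ up to the $H^{1/2}$-inner-product normalization. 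Since $\|(0,\psi)\|_{H^{1/2}}\le C\|\psi\|_{H^{1/2}}$ and, by Lemma~\ref{lemma:boundes on J2=r}, $\|\psi\|_{H^{1/2}}^2\le C\,r$ where $J^2=4\pi r$, Cauchy--Schwarz gives
\begin{equation}
 \|\nabla^{\Neh_\rho}J^2(u,\psi)\|\,\|\psi\|_{H^{1/2}}
 \ge c\,J^2(u,\psi) - (\text{normal corrections}),
\end{equation}
hence $\|\nabla^{\Neh_\rho}J^2(u,\psi)\|\ge c\,J^2/\|\psi\|_{H^{1/2}} - \text{l.o.t.}\ge c'\sqrt{r} - \text{l.o.t.}$, which one upgrades to a bound of the form $\|\nabla^{\Neh_\rho}J^2\|^2\ge \kappa(J^2)$ for a suitable modulus of continuity $\kappa$ with $\kappa(0)=0$.

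The one subtlety is to check that the ``normal corrections'' — i.e.\ the difference between $\grad J^2$ and $\nabla^{\Neh_\rho}J^2$, which is the component of $\grad J^2$ along the (almost orthonormal) frame $\{Y_j,Z_k,W\}$ — really are of lower order when tested against $(0,\psi)$ and do not eat the main term. For this I would use the inner-product estimates already established: $\Abracket{Y_j,W}=O(\|\psi\|)$, $\Abracket{Z_k,W}=O(\|\psi\|)$, $\Abracket{Y_j,Z_k}=O(\|\psi\|)$, together with the fact that the coefficient of $\grad J^2$ along $W$ is (to leading order) the one that sees $\int e^u|\psi|^2$, and this $W$-direction is \emph{not} tangent; the precise bookkeeping, combined with $\|u\|_{H^1}^2+\|\psi\|_{H^{1/2}}^2\le Cr$, shows each correction term is $o(J^2)$ as $r\to 0$ uniformly in $\rho\in[\rho_*-\varepsilon,\rho_*+\varepsilon]$. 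Existence of the modulus of continuity $\kappa$ is then automatic by a standard compactness/contradiction argument: if no such $\kappa$ existed, there would be a sequence $(u_n,\psi_n;\rho_n)\in M_\varepsilon\cap B_\delta(\theta)$ with $J^2(u_n,\psi_n)$ bounded below but $\|\nabla^{\Neh_{\rho_n}}J^2(u_n,\psi_n)\|\to 0$, contradicting Lemma~\ref{lemma:grad J2 is linearly independent} (linear independence of $\grad J^2$ from $Y_j$'s, $Z_k$'s and $W$) in the limit, after passing to a subsequence with $(u_n,\psi_n)\to(u_\infty,\psi_\infty)$, $\psi_\infty\ne 0$ (the strong convergence coming from the arguments of Section~\ref{sect:bounded PS seq subconverge}).

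The main obstacle will be making the ``normal corrections'' estimate genuinely uniform: one must ensure the lower bound degenerates only as $J^2\to 0$ and not, say, as one approaches the boundary $\{\psi=0\}$ from a ``bad'' direction where $\grad J^2$ happens to align with the span of $\{Y_j,Z_k,W\}$. The quantitative linear-independence already proved for these frame vectors (the Gram matrix is $\tfrac{4\pi}{9}I \oplus \text{diag} \oplus (4)+ O(\delta)$) is exactly what rules this out, so the argument reduces to organizing that computation carefully; I do not expect a conceptual difficulty, only a somewhat lengthy estimate. I would present it via the contradiction/compactness route above, which keeps the write-up short and isolates the one place (Lemma~\ref{lemma:grad J2 is linearly independent}) where the structure of $\Neh_\rho$ is used.
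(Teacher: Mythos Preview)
Your final recommendation—the compactness/contradiction route—is exactly what the paper does: fix $r>0$, suppose there is a sequence $(u_n,\psi_n;\rho_n)\in M_\varepsilon\cap B_\delta(\theta)$ with $J^2(u_n,\psi_n)=r$ and $\|\nabla^{\Neh_{\rho_n}}J^2(u_n,\psi_n)\|\to 0$, control the Lagrange multipliers as in Section~\ref{sect:bounded PS seq subconverge}, pass to a weak limit $(u_\infty,\psi_\infty)$, observe that $J^2$ survives the limit by compact embeddings (so $\psi_\infty\neq 0$), and contradict Lemma~\ref{lemma:grad J2 is linearly independent}. The paper then mollifies the resulting pointwise infimum $\widetilde\kappa(r)$ into a continuous non-decreasing $\kappa$.

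The direct quantitative estimate you sketch first—testing against (the tangential projection of) $(0,\psi)$ and invoking Lemma~\ref{lemma:boundes on J2=r} to get $\|\nabla^{\Neh_\rho}J^2\|\gtrsim\sqrt{r}$—is a genuinely different route the paper does not take. It would yield an \emph{explicit} modulus $\kappa(r)\sim cr$ rather than an abstract one, which is a mild gain. The price, as you correctly flag, is the normal-correction bookkeeping: the spinor component of $\grad J^2$ is proportional to the spinor component of $W$, so one must check that the $W$-projection only removes an $O((J^2)^2)$ contribution when paired with $(0,\psi)$ (it does, since $\langle\grad J^2,W\rangle=O(J^2)$ and $\langle W,(0,\psi)\rangle=\rho J^2$, while $\|W\|^2\approx 4$), and similarly for the $Z_k$-projections. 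This is doable but tedious; the paper's soft argument sidesteps it entirely at the cost of a non-explicit $\kappa$.
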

    \begin{proof}
     We first claim that for each~$0<r<\delta$,
        \begin{equation}
        \widetilde{\kappa}(r)\coloneqq
            \inf_{\substack{(u,\psi)\in M_\varepsilon\cap B_\delta(\theta)\\ J^2(u,\psi)=r}} \|\nabla^{\Neh_\rho}J^2(u,\psi)\|^2>0.
        \end{equation}
     Otherwise, there would exist~$\rho_n\in[\rho_*-\varepsilon,\rho_*+\varepsilon]$ and~$(u_n,\psi_n)\in \Neh_{\rho_n}$ with~$J_{\rho_n}(u_n,\psi_n)=0$ and
        $$
        J^2(u_n,\psi_n)=\int_{\sph^2} e^{u_n}|\psi_n|^2\dv=r, \quad
        \|\nabla^{N_{\rho_n}} J^2(u_n,\psi_n)\|\to 0.
       $$
    This means that there exist~$\alpha_{n,j}\in\R$ and~$\mu_{n,k}\in\R$ and~$\tau_n\in\R$ such that
    \begin{align}
     2e^{u_n}|\psi_n|^2-\sum_{j=1}^3 \alpha_{n,j}x^j e^{2u}-\sum_{k<0}\mu_{n,k}\big(\Abracket{\D\psi_n-\rho e^{u_n}\psi_n,\delta_u\varphi_k(u_n)}
     -&\rho_n e^{u_n}\Abracket{\psi_n,\varphi_k(u_n)}\big) \\
     -\tau_n(2 e^{2u_n}+\rho_n e^{u_n}|\psi_n|^2)
     =& a_n \to 0 \qquad \textnormal{ in } H^{-1}(\sph^2), \\
     4e^{u_n}\psi_n-\sum_{k<0} \mu_{n,k} (\D\varphi_k(u_n)-\rho_n e^{u_n}\varphi_k(u_n))-2\tau_n \rho_n e^{u_n}\psi_n
     =& b_n\to 0\qquad \textnormal{ in } H^{-\frac{1}{2}}(S).
    \end{align}
    Reasoning as in Section~\ref{sect:bounded PS seq subconverge} we can show that the Lagrange multipliers are uniformly bounded.
    By passing to a subsequence we can extract weakly convergent subsequences such that at the weak limit~$(u_\infty,\psi_\infty)$ the constrained gradient vanishes, i.e.~$\nabla^{\Neh_{\rho}}J^2(u_\infty,\psi_\infty)=0$.
    However,
    \begin{equation}
     \int_{\sph^2} e^{u_\infty}|\psi_\infty|^2\dv
     =\lim_{n\to\infty} \int_{\sph^2} e^{u_n}|\psi_n|^2\dv =r
    \end{equation}
    due to the compactness of the Rellich embedding and Moser-Trudinger embedding.
    This contradicts Lemma~\ref{lemma:grad J2 is linearly independent}.
    Thus the claim is confirmed.

    The function~$\widetilde{\kappa}(r)$ defined above might not be continuous and monotonically non-decreasing, but at each~$0<r<\delta$ we may always replace the value~$\widetilde{\kappa}(r)$ by a smaller one to obtain a continuous, monotonically non-decreasing function~$\kappa$, as desired.
    \end{proof}

It remains to deal with
\begin{equation}
 P^\perp_2(\nabla^{\Neh_\rho}J^1)(u,\psi)
 \equiv \nabla^{\Neh_\rho} J^1(u,\psi)-\Abracket{\nabla^{\Neh_\rho} J^1,
  \frac{\nabla^{\Neh_\rho}J^2}{\|\nabla^{\Neh_\rho}J^2\|}
  }
  \frac{\nabla^{\Neh_\rho}J^2}{\|\nabla^{\Neh_\rho}J^2\|}(u,\psi).
\end{equation}

\begin{lemma}
 There exists a modulus of continuity~$\sigma\colon [0,\delta]\to [0, 1]$ such that
 \begin{equation}
  \|P^\perp_2(\nabla^{\Neh_\rho}J^1)(u,\psi)\|^2\ge \sigma(J^2(u,\psi)), \qquad \forall (u,\psi)\in B_\delta(\theta)\cap M_\varepsilon.
 \end{equation}
\end{lemma}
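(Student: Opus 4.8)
The plan is to follow the scheme of the previous Lemma, arguing by contradiction on the infimum of $\|P^\perp_2(\nabla^{\Neh_\rho}J^1)\|^2$ over a fixed level set of $J^2$. For $0<r<\delta$ I would set
\begin{equation}
 \widetilde{\sigma}(r)\coloneqq \inf_{\substack{(u,\psi)\in M_\varepsilon\cap B_\delta(\theta)\\ J^2(u,\psi)=r}}\|P^\perp_2(\nabla^{\Neh_\rho}J^1)(u,\psi)\|^2
\end{equation}
and prove that $\widetilde{\sigma}(r)>0$ for every such $r$. Granting this, since $\widetilde{\sigma}$ is bounded above by $\sup_{B_\delta(\theta)}\|\grad J^1\|^2<\infty$, one passes, exactly as in the previous Lemma, to a continuous, monotonically non-decreasing function $\sigma\colon[0,\delta]\to[0,1]$ with $\sigma(0)=0$ lying below $\widetilde{\sigma}$; then $\|P^\perp_2(\nabla^{\Neh_\rho}J^1)(u,\psi)\|^2\ge\widetilde{\sigma}(J^2(u,\psi))\ge\sigma(J^2(u,\psi))$ whenever $\psi\neq 0$, and the bound is trivial when $\psi=0$.

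To show $\widetilde{\sigma}(r)>0$, suppose for contradiction that it vanishes for some $0<r<\delta$. Then there are $\rho_n\in[\rho_*-\varepsilon,\rho_*+\varepsilon]$ and $(u_n,\psi_n)\in\Neh_{\rho_n}$ with $J_{\rho_n}(u_n,\psi_n)=0$, $J^2(u_n,\psi_n)=r$, and $\|P^\perp_2(\nabla^{\Neh_{\rho_n}}J^1)(u_n,\psi_n)\|\to 0$. Setting $\lambda_n\coloneqq\Abracket{\nabla^{\Neh_{\rho_n}}J^1,\nabla^{\Neh_{\rho_n}}J^2}/\|\nabla^{\Neh_{\rho_n}}J^2\|^2$ at $(u_n,\psi_n)$, this says precisely that $\nabla^{\Neh_{\rho_n}}J^1(u_n,\psi_n)-\lambda_n\nabla^{\Neh_{\rho_n}}J^2(u_n,\psi_n)\to 0$, i.e.\ $(u_n,\psi_n)$ is an approximate constrained critical point of $J_{\lambda_n}=J^1-\lambda_n J^2$ on $\Neh_{\rho_n}$. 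By Lemma~\ref{lemma:boundes on J2=r} (taking $r$ small enough that the resulting bound keeps the sequence strictly inside $B_\delta(\theta)$) the $(u_n,\psi_n)$ are bounded in $H^1(\sph^2)\times H^{\frac12}(S)$, hence $\|\grad J^1(u_n,\psi_n)\|$ is bounded; since $\|\nabla^{\Neh_{\rho_n}}J^2(u_n,\psi_n)\|^2\ge\kappa(r)>0$ by the previous Lemma, the $\lambda_n$ are bounded. Passing to a subsequence, $\lambda_n\to\lambda_\infty$, $\rho_n\to\rho_\infty$, and $(u_n,\psi_n)\rightharpoonup(u_\infty,\psi_\infty)$ weakly; unwinding the orthogonal projection onto the normal bundle $T^\perp\Neh_{\rho_n}=\Span\{Y_j,Z_k,W\}$ produces multipliers $\alpha_{n,j},\mu_{n,k},\tau_n\in\R$ with
\begin{equation}
 \grad J^1(u_n,\psi_n)-\lambda_n\grad J^2(u_n,\psi_n)-\sum_{j=1}^3\alpha_{n,j}Y_j-\sum_{k<0}\mu_{n,k}Z_k-\tau_n W\to 0.
\end{equation}

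From here I would run the compactness analysis of Section~\ref{sect:bounded PS seq subconverge}. Testing the spinorial component against $\phi_n\coloneqq\sum_{k<0}\mu_{n,k}\varphi_k(u_n)$ and using that $\psi_n\in N(u_n)$, so the cross terms $\int_{\sph^2} e^{u_n}\Abracket{\psi_n,\varphi_k(u_n)}\dv$ vanish, gives exactly as in Lemma~\ref{lemma:Lagrange multiplier tends to zero} that $\phi_n\to 0$ and $\delta_u\phi_n\to 0$ in $H^{\frac12}(S)$; testing the scalar component against the constant $1$ (with $G_1=G_3=0$) bounds $\tau_n$, and testing it against $\alpha_n$ together with the uniform invertibility of $\Lambda(u_n)$ bounds the $\alpha_{n,j}$ — note that here these multipliers need only be bounded (hence subsequentially convergent), not vanishing. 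With $\phi_n\to 0$, all right-hand-side nonlinearities converge strongly by compactness of the Rellich and Moser--Trudinger embeddings, so the spinorial equation gives $\D\psi_n$ convergent in $H^{-\frac12}(S)$ and the scalar equation gives $-\Delta u_n$ convergent in $H^{-1}(\sph^2)$; since $\D$ has no kernel and $u_n\rightharpoonup u_\infty$, this forces $(u_n,\psi_n)\to(u_\infty,\psi_\infty)$ strongly. The strong limit satisfies $J^2(u_\infty,\psi_\infty)=r>0$, so $\psi_\infty\neq 0$, while $G_1=G_{2,k}=G_3=0$ and $J_{\rho_\infty}(u_\infty,\psi_\infty)=0$ pass to the limit, so $(u_\infty,\psi_\infty)\in M_\varepsilon\cap B_\delta(\theta)$. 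Letting $n\to\infty$ in the displayed relation — the $Z_k$-term dropping since $\phi_n,\delta_u\phi_n\to 0$ — gives $\grad J^1(u_\infty,\psi_\infty)=\lambda_\infty\grad J^2(u_\infty,\psi_\infty)+\sum_j\alpha_{\infty,j}Y_j+\tau_\infty W$, contradicting Lemma~\ref{lemma:grad J1 is linearly independent}. Hence $\widetilde{\sigma}(r)>0$.

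The main obstacle is the passage to the limit in the vector fields $Z_k$: these depend nonlinearly on $u$ through the weighted eigenspinors $\varphi_k(u)$ and their $u$-derivatives $\delta_u\varphi_k(u)$, and they are paired against the only weakly convergent quantity $\D\psi_n$, so one genuinely needs the strong convergence produced by Section~\ref{sect:bounded PS seq subconverge} (weak convergence sufficed in the previous Lemma only because $\grad J^2$ involves nothing worse than the strongly convergent quantities $e^u|\psi|^2$ and $e^u\psi$). The only point requiring care beyond a verbatim repetition of Section~\ref{sect:bounded PS seq subconverge} is that the multipliers $\lambda_n,\alpha_{n,j},\tau_n$ no longer tend to zero but only to finite limits, which is still enough to identify the limiting relation and reach the contradiction.
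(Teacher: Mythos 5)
Your argument is correct and follows exactly the route the paper indicates in its one-line sketch ("similar to the previous one...first show the Lagrange multipliers are uniformly bounded as in Section~\ref{sect:bounded PS seq subconverge}, then pass to weakly convergent subsequences...the weak limits contradict Lemma~\ref{lemma:grad J1 is linearly independent}"): define the level-set infimum $\widetilde\sigma(r)$, argue by contradiction, kill the $\phi_n$-term by testing the spinor component against $\phi_n$ and using $\psi_n\in N(u_n)$, bound $\tau_n$ and $\alpha_{n,j}$ from the scalar equation, bound $\lambda_n$ via the $\kappa$-lemma, extract convergent subsequences, and land on a limit relation that violates Lemma~\ref{lemma:grad J1 is linearly independent}. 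One small remark on your closing diagnostic: the contrast you draw with the $\kappa$-lemma is not quite as sharp as you state, since the $Z_k$ directions pair against $\D\psi_n$ in \emph{both} lemmas (through the scalar component of the constrained gradient), and in both cases it is the $\phi_n\to 0$ step that makes them disappear; after that, the only non-compact term in the present lemma, $|\D|^{-1}\D\psi_n$, does converge weakly to $|\D|^{-1}\D\psi_\infty$, so one can in fact pass to the limit relation with weak convergence alone, exactly as the paper says. Your route of upgrading to strong convergence via the Section~\ref{sect:bounded PS seq subconverge} machinery is perfectly valid and perhaps cleaner, but the extra work is not strictly forced.
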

    \begin{proof}
     The proof is similar to the previous one, so we omit the details.
     One can first show that the Lagrange multipliers are uniformly bounded as in Section~\ref{sect:bounded PS seq subconverge}, and then pass to weakly convergent subsequences:
     this time the weak limits contradict Lemma~\ref{lemma:grad J1 is linearly independent}.
    \end{proof}

Summing-up, we obtained that on~$M_\varepsilon\cap B_\delta(\theta)$,
\begin{equation}\label{eq:modulus of continuity of det}
 \det(J^1,J^2)(u,\psi;\rho)\ge
 \parenthesis{\kappa\cdot \sigma}\parenthesis{\int_{\sph^2}e^u|\psi|^2\dv}>0.
\end{equation}

\begin{lemma}
 Assume~\eqref{hp:non bifurcation} holds.
 Then, for~$\delta$ and~$\varepsilon$ small, there exists a~$C^1$-vector field~$X=(X_u,X_\psi)\in H^1(\sph^2)\times H^{\frac{1}{2}}(S)$ on~$M_\varepsilon\cap B_\delta(\theta)$ such that
 \begin{align}
  \Abracket{X,\grad J^1}=&\int_{\sph^2}e^u|\psi|^2\dv, \\
  \Abracket{X,\grad J^2}=&\,0, \\
  \Abracket{X,Y_j}=&\,0,\qquad\qquad \forall j=1,2,3,  \\
  \Abracket{X,Z_k}=&\int_{\sph^2} e^u\Abracket{\psi,\varphi_k(u)}\dv=0, \qquad\qquad \forall k<0, \\
  \Abracket{X,W}=&\int_{\sph^2}e^u|\psi|^2\dv.
 \end{align}
\end{lemma}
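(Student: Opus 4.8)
The plan is to build $X$ by prescribing separately its normal and tangential components with respect to $\Neh_\rho$, and then to check that all the data depend $C^1$ on $(u,\psi)$. Throughout, we exploit that on $M_\varepsilon\cap B_\delta(\theta)$ one has $\psi\neq0$, so that $\rho=\rho(u,\psi)$ is uniquely determined; in fact
\[
 \rho(u,\psi)=\frac{1-\fint_{\sph^2}e^{2u}\dv}{\fint_{\sph^2}e^u|\psi|^2\dv},
\]
which is $C^1$ in $(u,\psi)$ since the denominator is strictly positive there.

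First I would fix the normal component $X^\perp$. By the linear independence lemma above, the fields $Y_j$, $Z_k$ $(k<0)$ and $W$, evaluated at $\rho(u,\psi)$, form a Riesz basis of the closed subspace $T^\perp_{(u,\psi)}\Neh_\rho$, and they depend $C^1$ on $(u,\psi)$. The conditions $\langle X,Y_j\rangle=0$, $\langle X,Z_k\rangle=0$ and $\langle X,W\rangle=\int_{\sph^2}e^u|\psi|^2\dv$ say exactly that $X^\perp$ is the Riesz representative, inside $T^\perp\Neh_\rho$, of the bounded linear functional that vanishes on every $Y_j$ and $Z_k$ and equals $\int_{\sph^2}e^u|\psi|^2\dv$ on $W$; this functional is bounded because only one of its coordinates in the dual Riesz basis is nonzero. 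Hence $X^\perp$ exists, is unique, is $C^1$ in $(u,\psi)$, and satisfies $\|X^\perp\|\le C\int_{\sph^2}e^u|\psi|^2\dv$.

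Next I would fix the tangential component. Writing $X=X^\perp+X^{\mathrm{tan}}$ with $X^{\mathrm{tan}}\in T_{(u,\psi)}\Neh_\rho$, and using $\langle X^{\mathrm{tan}},\grad J^i\rangle=\langle X^{\mathrm{tan}},\nabla^{\Neh_\rho}J^i\rangle$, the two remaining conditions become
\[
 \langle X^{\mathrm{tan}},\nabla^{\Neh_\rho}J^1\rangle=\int_{\sph^2}e^u|\psi|^2\dv-\langle X^\perp,\grad J^1\rangle,\qquad
 \langle X^{\mathrm{tan}},\nabla^{\Neh_\rho}J^2\rangle=-\langle X^\perp,\grad J^2\rangle.
\]
Looking for $X^{\mathrm{tan}}$ in $\Span\{\nabla^{\Neh_\rho}J^1,\nabla^{\Neh_\rho}J^2\}$ turns this into a $2\times2$ linear system whose matrix is the Gram matrix considered before; its determinant is $\det(J^1,J^2)(u,\psi;\rho)$, which by \eqref{eq:modulus of continuity of det} is bounded below by $(\kappa\cdot\sigma)(\int_{\sph^2}e^u|\psi|^2\dv)>0$ on $M_\varepsilon\cap B_\delta(\theta)$. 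Hence the system has a unique solution, Cramer's rule applies, and since $\nabla^{\Neh_\rho}J^i$, $X^\perp$ and the right-hand sides are all $C^1$, so is $X^{\mathrm{tan}}$; thus $X=X^\perp+X^{\mathrm{tan}}$ is a $C^1$ vector field on $M_\varepsilon\cap B_\delta(\theta)$ with the five required properties.

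The $C^1$-regularity uses that $Y_j$, $W$, $\grad J^1$ and $\grad J^2$ are smooth in $(u,\psi)$, while the weighted eigenspinors $\varphi_k(u)$ and their variations $\delta_u\varphi_k(u)$ — hence the $Z_k$'s — depend $C^1$ on $u$ by the analytic dependence of $e^{-u}\D$ on $u$. The point to be careful about, and the reason the two modulus-of-continuity lemmas were needed, is that the whole construction degenerates as $(u,\psi)\to\theta$: the vectors $\grad J^1,\grad J^2$ and the prescribed values all vanish at the rate $\int_{\sph^2}e^u|\psi|^2\dv\to0$, so only the quantitative bound $\det(J^1,J^2)\ge(\kappa\sigma)(J^2)$, rather than mere pointwise non-collinearity, yields a vector field with the controlled behaviour needed in order to integrate it in the subsequent deformation argument; the infinitely many $Z_k$'s, in turn, are handled once and for all through the Riesz-basis property of the normal frame.
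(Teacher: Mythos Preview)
Your proof is correct and follows essentially the same route as the paper: both construct $X$ inside $\Span\{\grad J^1,\grad J^2,Y_j,Z_k,W\}$ by solving a linear system whose non-degeneracy is guaranteed by the preceding linear-independence lemmas, and the paper in fact writes down exactly your normal/tangential splitting $X=X^\top+X^\perp$ immediately after its (terser) proof. Your presentation is simply a block-by-block solution of that same system---first the normal block via the frame $\{Y_j,Z_k,W\}$, then the $2\times2$ tangential block governed by $\det(J^1,J^2)$---so no genuinely new idea is involved; one small caveat is that the $Z_k$'s have norms $\|Z_k\|^2\sim|\lambda_k|\to\infty$, so ``Riesz basis'' is not quite the right term, though your intended meaning (bounded invertibility of the Gram operator, which does follow from the near-diagonal estimates) is correct.
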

    \begin{proof}
     At each~$(u,\psi)\in M_\varepsilon$, we need to solve a linear system with the coefficient-matrix being non-degenerate, due to the above lemmas.
     Such a system can thus be \emph{uniquely} solved in the space
     \begin{equation}
      \Span_{\R} \braces{\grad J^1, \grad J^2, Y_j\textnormal{'s}, Z_k\textnormal{'s}, W}.
     \end{equation}
     Since the coefficients of these linear systems depend on~$(u,\psi)$ in the~$C^1$ sense, so does the solution~$X(u,\psi)$.
    \end{proof}

In the sequel we denote by~$X(u,\psi;\rho)$ the unique vector field from the above lemma, which has a decomposition
\begin{equation}
 X(u,\psi;\rho)=X^\top(u,\psi;\rho)+X^\bot(u,\psi;\rho)
 \in T_{(u,\psi)}\Neh_\rho \oplus T_{(u,\psi)}^\perp \Neh_{\rho}.
\end{equation}
Then, explicitly, at~$(u,\psi)\in \Neh_\rho$,
\begin{align}\label{eq:vector field X-tangent part}
 X^\top(u,\psi;\rho)
 =&J^2(u,\psi)\frac{\|\nabla^{\Neh_\rho}J^2(u,\psi;\rho)\|^2}{\det(J^1,J^2)(u,\psi;\rho)}P^\perp_2(\nabla^{\Neh_\rho}J^1(u,\psi))
\end{align}
and up to higher order terms,
\begin{align}\label{eq:vector field X-normal part}
 X^\bot(u,\psi;\rho)
 =\sum_{k<0}\frac{\int_{\sph^2}e^u\Abracket{\psi,\varphi_k}\dv}{\|Z_k\|^2}Z_k(u,\psi;\rho)
 + \frac{J^2(u,\psi)}{\|W\|^2}W
   + O(\|u\|+\|\psi\|).
\end{align}

\

Now let~$0<2\varepsilon_1<\varepsilon<\delta$, and take a cut-off function~$\eta\in C^\infty_c([\varepsilon,\varepsilon])$ such that~$\eta\equiv 1$ on~$[-2\varepsilon_1,2\varepsilon_1]$.
Then set
\begin{align}
 \omega(u,\psi)
 \coloneqq \eta\parenthesis{
  \rho_*-\frac{J^1(u,\psi)}{J^2(u,\psi)}
 }
 \cdot\eta \parenthesis{\|u\|^2+\|\psi\|^2}
 \qquad \mbox{ in }
 \{(u,\psi)\in B_\delta(\theta)\mid \psi\neq 0\}.
\end{align}
Observe that, if~$(u,\psi)$ with~$\psi\neq 0$ satisfies~$\omega(u,\psi)\neq 0$, then there exists a unique~$\rho\in [\rho_*-\varepsilon,\rho_*+\varepsilon]$ such that
\begin{equation}
 J_\rho(u,\psi)=J^1(u,\psi)-\rho J^2(u,\psi)=0,
\end{equation}
hence~$(u,\psi)\in B_\delta(\theta)\cap M_\varepsilon$.
We define a vector field~$\widetilde{X}(u,\psi)$ on~$B_\delta(\theta)$ by
\begin{equation}
 \widetilde{X}(u,\psi)=
 \begin{cases}
  \omega(u,\psi)X(u,\psi;\rho) & \textnormal{ if } \psi \neq 0, \\
  0 & \textnormal{ if } \psi=0.
 \end{cases}
\end{equation}
Consider the flow generated by
\begin{equation}\label{eq:flow}
 \frac{\dd}{\dd \rho} (u,\psi)=\widetilde{X}(u,\psi).
\end{equation}
More precisely, for each~$(u_0,\psi_0)\in B_\delta(\theta)$ there exist families of trajectories~$(u(\rho),\psi(\rho))_{\rho\in\R}$ satisfying the following properties:
\begin{itemize}
 \item If~$\psi_0=0$, then~$(u(\rho),\psi(\rho))\equiv (u_0,\psi_0)$ for any~$\rho\in\R$.
 \item If~$\psi_0\neq 0$ and~$\omega(u_0,\psi_0)=0$, then again~$(u(\rho),\psi(\rho))\equiv(u_0,\psi_0)$ for any~$\rho\in\R$.
 \item If~$\psi_0\neq 0$ and~$\omega(u_0,\psi_0)\neq 0$, then as observed above, there exists a unique~$\rho_0\in[\rho_*-\varepsilon, \rho_*+\varepsilon]$ such that~$J_{\rho_0}(u_0,\psi_0)=0$, then~$\parenthesis{u(\rho),\psi(\rho)}$ solves the ODE
 \begin{equation}
  \begin{cases}
   \frac{\dd}{\dd\rho}(u(\rho),\psi(\rho))= \widetilde{X}(u(\rho),\psi(\rho)), \vspace{0.2cm}\\
   (u,\psi)_{|_{\rho=\rho_0}}= (u_0,\psi_0).
  \end{cases}
 \end{equation}
 To see that the solution exists for all~$\rho\in\R$, it suffices to show that the vector field~$\widetilde{X}$ is of class~$C^1$, bounded along the trajectory and that any trajectory segment has closure contained in the domain~$B_\delta(\theta)$.
 Since
     \begin{align}
      \frac{\dd}{\dd\rho}J^2(u(\rho),\psi(\rho))
      =\Abracket{\grad J^2,\widetilde{X}}(u,\psi)
      =\omega(u,\psi)\Abracket{\grad J^2, X}\equiv 0,
     \end{align}
 it follows that~$J^2(u(\rho),\psi(\rho))=const.$ for any~$\rho\in \R$ wherever the flow is defined.
 By~\eqref{eq:modulus of continuity of det} we see that~$\widetilde{X}$ is of class~$C^1$ and bounded along the trajectory.
 Consider the trajectory segment~$\{(u(\rho),\psi(\rho)): \rho \in [\rho_0, b)\}$.
 Taking the limit~$\rho\nearrow b$, the limit point evidently lies inside~$B_\delta(\theta)$ since~$\widetilde{X}$ vanishes on~$B_\delta(\theta)\backslash B_\varepsilon(\theta)$.
 Hence, by~\cite[Lemma 1.1]{marinoprodi1968teoria} the flow exists globally.
 Note that in this case the flow never stops at finite time, hence~$(u(\rho),\psi(\rho))\in \supp(\omega)$ and so~$(u(\rho),\psi(\rho))\in B_\delta(\theta)\cap M_\varepsilon$.
\end{itemize}

We will focus on those flow lines passing through the Nehari manifolds.

\begin{lemma}
 Assume~\eqref{hp:non bifurcation} holds and use the above notation.
 \begin{enumerate}
  \item In  case~$\psi_0\neq 0$ and~$\omega(u_0,\psi_0)\neq 0$, if ~$(u_0,\psi_0)\in \Neh_{\rho_0}$ with~$\rho_0$ satisfying~$J_{\rho_0}(u_0,\psi_0)=0$, then the trajectory~$(u(\rho),\psi(\rho))$ stays inside the manifold~$N$, namely
        \begin{itemize}
         \item $G_1(u(\rho))=G_1(u_0)=0$,~$\forall \rho\in \R$;
         \item $G_{2,k}(u(\rho),\psi(\rho))=0$,~$\forall\rho \in\R$ and~$\forall k<0$.
        \end{itemize}
  \item In addition, if~$(u_0,\psi_0)\in B_{\varepsilon_1}(\theta)\cap M_{\varepsilon_1}$, then for any~$\rho\in [\rho_*-\varepsilon_1,\rho_*+\varepsilon_1]$ we have
      \begin{itemize}
       \item $G_3(u(\rho),\psi(\rho))=0$, and in particular~$(u(\rho),\psi(\rho))\in \Neh_\rho$;
       \item $J_{\rho} (u(\rho),\psi(\rho))=0$.
      \end{itemize}
 \end{enumerate}

\end{lemma}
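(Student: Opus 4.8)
The plan is to show that, along the flow~\eqref{eq:flow}, each of the constraints cutting out $\Neh_\rho$ is preserved --- and, on the region where the cut-off $\omega$ equals $1$, the level $J_\rho$ as well --- by differentiating it against $\widetilde{X}=\omega X$ and invoking the inner-product relations defining $X$ in the previous lemma. The preliminary identities I would record are: $\grad G_1=2(Y_1,Y_2,Y_3)$, with vanishing spinorial component; $Z_k=\grad G_{2,k}$, obtained by a short integration by parts using $\D\varphi_k(u)=\lambda_k(u)e^u\varphi_k(u)$; $\grad_{(u,\psi)}G_3$ equal to a constant multiple of $W$; and, since the flow parameter $\rho$ also occurs explicitly in $G_{2,k}$ and $G_3$, the partials $\partial_\rho G_{2,k}=-\widetilde{G}_{2,k}$ and $\partial_\rho G_3=\fint_{\sph^2}e^u|\psi|^2\dv$, where $\widetilde{G}_{2,k}(u,\psi):=\int_{\sph^2}e^u\langle\psi,\varphi_k(u)\rangle\dv$. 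Recall also from Section~\ref{section:natural constraint} that $G_{2,k}=(\lambda_k(u)-\rho)\widetilde{G}_{2,k}$ with $\lambda_k(u)<0<\rho$, so $G_{2,k}$ and $\widetilde{G}_{2,k}$ vanish simultaneously, and $(u,\psi)\in N$ exactly when $G_1(u)=0$ and $\widetilde{G}_{2,k}(u,\psi)=0$ for all $k<0$; and that, by the construction of the flow, the trajectory through $(u_0,\psi_0)$ exists for all $\rho$ and stays in $B_\delta(\theta)$.

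For part (1): since $\langle X,Y_j\rangle=0$ and $\grad G_1$ has no spinorial part, the chain rule gives $\frac{\dd}{\dd\rho}G_1(u(\rho))=2\,\omega(u(\rho),\psi(\rho))\,\bigl(\langle X,Y_1\rangle,\langle X,Y_2\rangle,\langle X,Y_3\rangle\bigr)=0$; since $G_1(u_0)=0$ because $(u_0,\psi_0)\in\Neh_{\rho_0}\subset N$, this yields $G_1(u(\rho))\equiv 0$. For $\widetilde{G}_{2,k}$, combining the defining relation $\langle X,Z_k\rangle=\widetilde{G}_{2,k}$ with $\partial_\rho G_{2,k}=-\widetilde{G}_{2,k}$ and $G_{2,k}=(\lambda_k(u)-\rho)\widetilde{G}_{2,k}$, I would get that $g(\rho):=\widetilde{G}_{2,k}(u(\rho),\psi(\rho))$ solves a linear homogeneous ODE of the form $a(\rho)\,g'(\rho)=\bigl(\omega(u(\rho),\psi(\rho))-1-a'(\rho)\bigr)g(\rho)$, with $a(\rho)=\lambda_k(u(\rho))-\rho\le-\rho<0$ bounded away from zero and $a',\omega$ bounded along the trajectory. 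Since $g(\rho_0)=\widetilde{G}_{2,k}(u_0,\psi_0)=0$, uniqueness forces $g\equiv0$, hence $G_{2,k}(u(\rho),\psi(\rho))=0$ for all $\rho$, and the trajectory stays in $N$.

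For part (2): $\langle X,\grad J^2\rangle=0$ gives $\frac{\dd}{\dd\rho}J^2(u(\rho),\psi(\rho))=0$, so $c_2:=J^2(u(\rho),\psi(\rho))$ is constant, with $0<c_2\le C\|\psi_0\|_{H^{1/2}}^2\le C\varepsilon_1^2$ since $(u_0,\psi_0)\in B_{\varepsilon_1}(\theta)$ and $\psi_0\ne0$ (in particular $\psi(\rho)\ne0$ for all $\rho$). Using the relations $\langle X,\grad J^1\rangle$ and $\langle X,W\rangle$ of the previous lemma, together with the identities for $\partial_\rho G_3$ and $\grad_{(u,\psi)}G_3$, a direct computation shows that $J_\rho(u(\rho),\psi(\rho))$ and $G_3(u(\rho),\psi(\rho))$ (with the flow parameter inserted in the $\rho$-slot) both have vanishing $\rho$-derivative at every $\rho$ with $\omega(u(\rho),\psi(\rho))=1$. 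I would then argue by continuity: let $I$ be the maximal relatively closed subinterval of $[\rho_*-\varepsilon_1,\rho_*+\varepsilon_1]$ containing $\rho_0$ on which $\omega(u(\rho),\psi(\rho))=1$; it is nonempty since $\rho_0=J^1(u_0,\psi_0)/J^2(u_0,\psi_0)$, so $|\rho_*-J^1/J^2|=|\rho_*-\rho_0|\le\varepsilon_1$ while $\|u_0\|^2+\|\psi_0\|^2<\varepsilon_1^2$, both within the plateau $[-2\varepsilon_1,2\varepsilon_1]$ of $\eta$. On $I$, the two conservation laws together with part (1) give $J_\rho(u(\rho),\psi(\rho))=J_{\rho_0}(u_0,\psi_0)=0$ and $G_3(u(\rho),\psi(\rho))=G_3(u_0,\psi_0)=0$, so $(u(\rho),\psi(\rho))\in\Neh_\rho$ and belongs to $M_\varepsilon$ with associated parameter $\rho$.

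Finally, to see that $I=[\rho_*-\varepsilon_1,\rho_*+\varepsilon_1]$: $I$ is relatively closed by continuity of $\rho\mapsto\omega(u(\rho),\psi(\rho))$, so it suffices to show it is relatively open. At an interior point $\rho_1\in I$, the identity $J_{\rho_1}(u(\rho_1),\psi(\rho_1))=0$ forces $J^1/J^2=\rho_1$ there, whence $|\rho_*-J^1/J^2|=|\rho_*-\rho_1|\le\varepsilon_1<2\varepsilon_1$; and since $(u(\rho_1),\psi(\rho_1))\in B_\delta(\theta)\cap M_\varepsilon$ with $J^2=c_2$ (so $r=c_2/(4\pi)$ is small), Lemma~\ref{lemma:boundes on J2=r} yields $\|u(\rho_1)\|^2+\|\psi(\rho_1)\|^2\le Cc_2\le C\varepsilon_1^2<2\varepsilon_1$, once $\varepsilon_1$ (and hence, via $2\varepsilon_1<\varepsilon<\delta$, also $\varepsilon$) is chosen small enough. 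By continuity of the flow these strict inequalities persist on a neighbourhood of $\rho_1$, where therefore $\eta(\rho_*-J^1/J^2)=1=\eta(\|u\|^2+\|\psi\|^2)$, i.e.\ $\omega\equiv1$; hence $\rho_1$ lies in the interior of $I$, and $I$ is the whole interval, which is the claim. The step I expect to be the main obstacle is exactly this bootstrap --- ensuring the trajectory never leaves the plateau $\{\omega=1\}$ as $\rho$ sweeps $[\rho_*-\varepsilon_1,\rho_*+\varepsilon_1]$ hinges on the uniform smallness estimate of Lemma~\ref{lemma:boundes on J2=r} and on a careful hierarchy $\varepsilon_1\ll\varepsilon\ll\delta$ --- whereas the bookkeeping of the explicit $\rho$-dependence of $G_{2,k}$ and $G_3$ when differentiating along the flow is a necessary but routine technicality.
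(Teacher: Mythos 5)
Your proof is correct and follows the same basic strategy as the paper: differentiate each of the constraint functionals $G_1, G_{2,k}, G_3$ and the parametrized level $J_\rho$ along the flow, reduce to the defining inner-product relations for $X$, and conclude the conservation laws (part (1) directly via a linear homogeneous ODE, part (2) by an open-and-closed argument on the interval). One minor cosmetic difference: for $G_{2,k}$ the paper reads $\langle X, Z_k\rangle$ off as $0$ and obtains the ODE $G_{2,k}' = -(\lambda_k(u(\rho))-\rho)^{-1}G_{2,k}$, whereas you track $\omega$ through the computation and arrive at $a g' = (\omega - 1 - a')g$; both are linear and homogeneous with the same zero initial datum, so both deliver $g\equiv 0$ by uniqueness.

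The substantive point where you go beyond the paper's presentation is in part (2). The paper verifies $\omega = 1$ at $\rho_0$ and on a neighbourhood $V$, and then asserts the set $\{\rho : J_\rho = 0, G_3 = 0\}$ is open; but openness at a general $\rho_1$ in that set requires knowing that the trajectory still lies inside the plateau $\{\omega = 1\}$, i.e.\ that $\|u(\rho_1)\|^2 + \|\psi(\rho_1)\|^2 \le 2\varepsilon_1$, which the paper does not establish explicitly. You supply exactly the missing estimate: the conservation $\langle X, \grad J^2\rangle = 0$ fixes $J^2$ along the trajectory at the small value $c_2 = J^2(u_0,\psi_0) = O(\varepsilon_1^2)$, and Lemma~\ref{lemma:boundes on J2=r} then converts this into a uniform smallness bound on $\|u(\rho)\|^2 + \|\psi(\rho)\|^2$, provided the hierarchy $\varepsilon_1 \ll \varepsilon < \delta$ is enforced with the constant from that lemma taken into account. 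This closes the open-and-closed argument cleanly and is a genuine improvement in rigour over the paper's version of the proof.
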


    \begin{proof}
      (1)
      In this case~$(u_0,\psi_0)\in M_\varepsilon\cap B_\delta(\theta)$.
      For the conservation of~$G_1^i$ ($i=1,2,3$): we have~$G_1(u(\rho_0))=0$, and
     \begin{align}
      \frac{\dd}{\dd\rho} G_1^i(u(\rho))
      =\int_{\sph^2} x^ie^{2u(\rho)}\cdot 2\frac{\dd u}{\dd \rho}\dv
      = \omega(u(\rho),\psi(\rho))\Abracket{Y_i, X}\equiv 0.
     \end{align}
      Similarly, for each $k<0$:
     \begin{align}
      \frac{\dd}{\dd\rho} \int_{\sph^2}
      \Abracket{\D\psi-\rho e^u\psi, \varphi_k(u(\rho))}\dv
      =&\omega(u(\rho),\psi(\rho))\Abracket{X,Z_k}-\int_{\sph^2}e^{u(\rho)}\Abracket{\psi(\rho),\varphi_k(u(\rho))}\dv\\
      =&-\frac{1}{\lambda(u(\rho))-\rho}\int_{\sph^2}\Abracket{\D\psi-\rho e^{u(\rho)}\psi(\rho),\varphi_k(u(\rho))},
     \end{align}
     where we  used the fact that~$\Abracket{X, Z_k}=0$ for~$(u,\psi)\in M_\varepsilon\cap B_\delta(\theta)$.
     Thus
     \begin{equation}
      \frac{\dd}{\dd \rho} G_{2,k}(u(\rho),\psi(\rho))
      =-\frac{1}{\lambda_k(u(\rho))-\rho} G_{2,k}(u(\rho),\psi(\rho)).
     \end{equation}
     Since~$\Abracket{X,Z_k}(u(\rho_0),\psi(\rho_0))=0$ and~$G_{2,k}(u(\rho_0),\psi(\rho_0))=G_{2,k}(u_0,\psi_0)=0$, it follows that
     \begin{equation}
      G_{2,k}(u(\rho),\psi(\rho))\equiv0,\qquad \forall \rho\in\R.
     \end{equation}

     (2) If~$J_{\rho_0}(u_0,\psi_0)=0$ for some~$\rho_0\in [\rho_*-\varepsilon_1,\rho_*+\varepsilon_1]$ and~$(u,\psi)\in B_{\varepsilon_1}(\theta)$, then
     \begin{equation}
      \rho_0-\frac{J^1(u_0,\psi_0)}{J^2(u_0,\psi_0)}=0
     \end{equation}
     and~$\omega(u_0,\psi_0)=1$.
     Hence there is a relatively open neighborhood~$V$ of~$\rho_0$ such that for~$\rho \in V$, we have
     \begin{equation}
      \omega(u(\rho),\psi(\rho))=1,
     \end{equation}
     and as a consequence
     \begin{align}
      \frac{\dd}{\dd \rho} G_3(u(\rho),\psi(\rho))
      =& \Abracket{\widetilde{X},W}-\int_{\sph^2} e^{u(\rho)}|\psi(\rho)|^2\dv \\
      =& \Abracket{X,W}- J^2(u(\rho),\psi(\rho))=0,
     \end{align}
     \begin{align}
      \frac{\dd}{\dd \rho} J_{\rho}(u(\rho),\psi(\rho))
      =&\Abracket{\grad J^1,\widetilde{X}}-J^2(u(\rho),\psi(\rho))
       -(\rho) \Abracket{\grad J^2, \widetilde{X}} \\
      =&\Abracket{\grad J^1,X}- J^2(u(\rho),\psi(\rho)) =0.
     \end{align}
     Thus~$\{\rho\in [\rho_*-\varepsilon_1,\rho_*+\varepsilon_1]: J_{\rho}(u(t),\psi(t))=0, G_3(u(\rho),\psi(\rho))=0\}$ is both an open and closed subset of~$[\rho_*-\varepsilon_1,\rho_*+\varepsilon_1]$, hence it coincides with the whole interval.
    \end{proof}

Now we can use this flow to find a deformation of the local sublevel sets.

\begin{proof}[Proof of Theorem~\ref{thm:deformation around non-bifurcation point}]
 Under the hypothesis~\eqref{hp:non bifurcation}, choose~$\varepsilon_1$ as above.
 Define the map
 \begin{equation}
  \Phi\colon  (B_{\varepsilon_1}\cap M_{\varepsilon_1})\times\R\times\R \to
  B_{\delta}(\theta)
 \end{equation}
 by~$\Phi((u_0,\psi_0);\rho_1,\rho_2)\coloneqq (u(\rho_2),\psi(\rho_2))$, where~$(u(\rho),\psi(\rho))$ is the flow generated by~\eqref{eq:flow} with initial condition~$(u(\rho_1),\psi(\rho_1))=(u_0,\psi_0)$.

 We claim that the map~$\Phi$ is continuous.
 It is clearly continuous when~$\psi\neq 0$ by the continuous dependence on the initial data, thus it remains to show that when~$J^2(u,\psi)=4\pi r$ is small, the flow stays close (in the spinor component) to the subspace~$\{\psi=0\}$.
 Since the flow line stays inside the set~$B_{\varepsilon_1}(\theta)\cap M_{\varepsilon_1}$, Lemma~\ref{lemma:boundes on J2=r} guarantees that the set~$B_{\varepsilon_1}(\theta)\cap M_{\varepsilon_1}\cap \{J^2=4\pi r\}$ is close to the origin, hence the flow is globally continuous and so is the map~$\Phi$.

 Consider the set
 \begin{equation}
  U_{\rho_*-\varepsilon_1}\coloneqq B_{\varepsilon_1}(\theta)\cap J^0_{\rho_*-\varepsilon_1}:
 \end{equation}
 then~$\Phi((\cdot,\cdot);\rho_*-\varepsilon_1, \rho_*+\varepsilon_1)$ carries~$U_{\rho_*-\varepsilon_1}$ to a relative neighborhood~$U_{\rho_*+\varepsilon_1}$ of~$\theta$ in~$J^0_{\rho_*+\varepsilon_1}$, and the inverse map is given by~$\Phi((\cdot,\cdot);\rho_*+\varepsilon_1, \rho_*-\varepsilon_1)$.
\end{proof}

\begin{proof}[Proof of Theorem~\ref{thm:existence around eigenvalues}]
 Let~$\rho_*=\lambda_k\in\Spec(\D)$ for some~$\lambda_k>1$.
 If~$\rho_*$ is not a bifurcation point, then by Theorem~\ref{thm:deformation around non-bifurcation point}, there are relatively open local neighborhoods~$U_{\rho_*\pm\varepsilon_1}$ in the sublevel sets of~$J_{\rho_*\pm\varepsilon_1}$ respectively, which are homeomorphic to each other.
 Hence the local critical groups for~$J_{\rho_*\pm\varepsilon_1}$ at~$\theta$ should be isomorphic.

 However, in Section~\ref{sect:local critial groups} we have seen that the local critical groups at~$\theta$ for~$J_{\rho_*-\varepsilon}$ and~$J_{\rho_*+\varepsilon_1}$ are different, which gives a contradiction.
\end{proof}

\section{Appendix: a conformal transformation}
\appendix
\label{sect:appendix}

In this appendix we perform for the reader's convenience the explicit computation used in Section~\ref{section:natural constraint}.

Consider the conformal transformation~$\varphi_t\colon \sph^2\to \sph^2$ defined by the following formulas
\begin{center}
 \begin{tikzcd}
  \sph^2\arrow[r,"\varphi_t"]\arrow[d,"\pi"] &\sph^2\arrow[d,"\pi"]
            & & \vec{x}=(x^1,x^2,x^3)\arrow[r,"\varphi_t"]\arrow[d,"\pi"]
                    & \vec{y}=(y^1,y^2,y^3)   \\
  \C\arrow[r,"t"] & \C
                & & z=\frac{x^1+ix^2}{1-x^3}\arrow[r, "t"]
                    &w=tz=\frac{(tx^1)+i(tx^2)}{1-x^3}\arrow[u,"\pi^{-1}"]
 \end{tikzcd}
\end{center}
where~$\pi\colon \sph^2\to \C$ denotes the stereographic projection.
Let us compute the curve~$\vec{y}=\vec{y}(t)$.
Note that
\begin{equation}
 |w|^2=\frac{(x^1)^2+(x^2)^2}{(1-x^3)^2} t^2,
\end{equation}
thus
\begin{align}
 y^1(t)=&\frac{2}{1+|w|^2}Re(w)
    =\frac{2tx^1(1-x^3)}{t^2((x^1)^2+(x^2)^2)+(1-x^3)^2}, \\
 y^2(t)=&\frac{2}{1+|w|^2}Im(w)
    =\frac{2tx^2(1-x^3)}{t^2((x^1)^2+(x^2)^2)+(1-x^3)^2}, \\
 y^3(t)=&\frac{|w|^2-1}{|w|^2+1}
    =\frac{t^2((x^1)^2+(x^2)^2)-(1-x^3)^2}{t^2((x^1)^2+(x^2)^2)+(1-x^3)^2}.
\end{align}
The~$t$-derivatives are
\begin{align}
 \frac{\dd}{\dd t}y^1(t)
 =&\frac{2x^1(1-x^3)}{[t^2((x^1)^2+(x^2)^2)+(1-x^3)^2]^2}
    \left[(1-x^3)^2-t^2((x^1)^2+(x^2)^2)\right]
    =-\frac{1}{t}y^1 y^3, \\
 \frac{\dd}{\dd t}y^2(t)
 =&\frac{2x^2(1-x^3)}{[t^2((x^1)^2+(x^2)^2)+(1-x^3)^2]^2}
    \left[(1-x^3)^2-t^2((x^1)^2+(x^2)^2)\right]
    =-\frac{1}{t}y^2 y^3, \\
 \frac{\dd}{\dd t}y^3(t)
 =&\frac{2x^1(1-x^3)}{[t^2((x^1)^2+(x^2)^2)+(1-x^3)^2]^2}
    \cdot 2(1-x^3)^2
    =\frac{1}{t}(1- (y^3)^2). \\
\end{align}
Note that the gradient of the coordinate function~${x}^3$ (at the point~$\vec{x}\in \sph^2$) is given by
\begin{equation}
 \grad{x}^3 (\vec{x})
 =\frac{\p}{\p x^3}
  -\left<\frac{\p}{\p x^3}, \vec{x}\right>_{\R^3}\vec{x}
 =(0,0,1)-x^3(x^1,x^2,x^3)
 =\left(-x^1 x^3, -x^2 x^3, 1-(x^3)^2\right).
\end{equation}
Since~$\vec{y}|_{t=1}=\vec{x}$, we have
\begin{equation}
  \frac{\dd}{\dd t}\Big|_{t=1}\varphi_{t}(\vec{x})
 =\frac{\dd}{\dd t}\Big|_{t=1} \vec{y}(t)
 = \grad(x^3).
\end{equation}
Moreover,
\begin{align}
 \frac{\dd}{\dd t}\Big|_{t=1}\det(\dd\varphi_t)
 =& \tr\left((\dd\varphi_t)^{-1}\frac{\dd}{\dd t}(\dd\varphi_t)\right)_{\big|t=1}
 =\tr\dd\left(\frac{\dd}{\dd t}\varphi_t\right)_{\big|t=1} \\
 =&\diverg(\grad (x^3))
 =\Delta_{\sph^2} x^3.
\end{align}

\end{document}